\numberwithin{equation}{section}
\newtheorem{thm}{Theorem}[section]
\newtheorem{prop}[thm]{Proposition}
\newtheorem{lem}[thm]{Lemma}
\newtheorem{defn}[thm]{Definition}
\newtheorem{claim}{Claim}
\theoremstyle{remark}
\newtheorem{rem}[thm]{Remark}
\newtheorem{assumption}{Assumption}
\newcommand{\R}{{\mathbb R}}
\newcommand{\Z}{{\mathbb Z}}
\newcommand{\N}{{\mathbb N}}
\newcommand{\C}{{\mathbb C}}
\newcommand{\Sp}{{\mathbb S}}
\newcommand{\F}{\mathcal{F}}
\newcommand{\supp}{\operatorname{supp}}
\date{\today}
\title[Well-posedness for quadratic $2$D NLS]{Sharp well-posedness for the Cauchy problem of the two dimensional quadratic nonlinear Schr\"{o}dinger equation with angular regularity}
\author[H. Hirayama]{Hiroyuki Hirayama}
\address[H. Hirayama]{Faculty of Education, University of Miyazaki,
1-1, Gakuenkibanadai-nishi, Miyazaki, 889-2192 Japan}
\email[H. Hirayama]{h.hirayama@cc.miyazaki-u.ac.jp}
\author[S. Kinoshita]{Shinya Kinoshita}
\address[S. Kinoshita]{Department of Mathematics,
Graduate School of Science and Engineering,
Saitama University,
255 Shimo-Okubo,
Saitama 338-8570, Japan}
\email[S. Kinoshita]{kinoshita@mail.saitama-u.ac.jp}
\author[M. Okamoto]{Mamoru Okamoto}
\address[M. Okamoto]{Department of Mathematics,
Graduate School of Science, Osaka University,
Toyonaka, Osaka 560-0043, Japan}
\email[M. Okamoto]{okamoto@math.sci.osaka-u.ac.jp}
\numberwithin{equation}{section}
\begin{document}

\subjclass[2020]{35Q55, 35B30}

\keywords{Schr\"odinger equation;
Cauchy problem;
well-posedness;
low regularity}

\begin{abstract}
This paper is concerned with the Cauchy problem of the quadratic nonlinear Schr\"{o}dinger equation in $\R \times \R^2$ with the nonlinearity $\eta |u|^2$ where $\eta \in \C \setminus \{0\}$ and low regularity initial data. 
If $s < -1/4$, the ill-posedness result in the Sobolev space $H^{s}(\R^2)$ is known. We will prove the well-posedness in $H^s(\R^2)$ for $-1/2 < s < -1/4$ by assuming some angular regularity on initial data. 
The key tools are the modified Fourier restriction norm and the convolution estimate on thickened hypersurfaces.
\end{abstract}

\maketitle
\section{Introduction}
We consider the Cauchy problem of the nonlinear Schr\"{o}dinger equation (NLS):
\begin{equation}\label{NLS}
\begin{cases}
i\partial_{t} u (t,x) - \Delta u(t,x) =\eta |u(t,x)|^2, \quad (t,x) \in \R \times \R^d,\\
u(0,x)=u_0(x),
\end{cases}
\end{equation}
where $\eta \in \C \setminus \{0\}$, $\Delta$ is the Laplacian in $\R^d$, and the solution $u$ is complex-valued. The following scaling transformation
\[
u_{\lambda}(t,x) = \lambda^2 u(\lambda^2 t, \lambda x), \quad \lambda >0
\]
implies that \eqref{NLS} has a scaling invariance in the homogeneous Sobolev space $\dot{H}^{s_c}(\R^d)$ with the scaling critical index $s_c =d/2-2$. 
Our aim is to establish the local in time well-posedness of \eqref{NLS} with $d=2$ in a low regularity Sobolev space $H^s(\R^2)$ with an additional angular regularity. 

There are a lot of works on the Cauchy problem of NLS with quadratic nonlinearities. 
It is known that a necessary regularity for the well-posedness depends heavily 
on the structure of the nonlinear term. To see this, let us first consider the Cauchy problem of NLS with the nonlinear term $\eta u^2$ where $\eta \in \C \setminus \{0\}$:
\begin{equation}\label{eq:NLSu2}
i\partial_{t} u - \Delta u =\eta u^2 \quad \textnormal{in} \ \R \times \R^d.
\end{equation}
In the one dimensional case, Kenig, Ponce, and Vega \cite{KPV96} proved that the Cauchy problem of \eqref{eq:NLSu2} is locally well-posed in $H^s(\R)$ if $s>-3/4$ by exploiting the Fourier restriction norm method. 
Bejenaru and Tao \cite{BT06} relaxed the regularity condition by showing the local well-posedness of \eqref{eq:NLSu2} in $H^s(\R)$ for $s \geq -1$ and, in addition, they proved ill-posedness if $s$ is below $-1$. 
In the two dimensional case, the local well-posedness for \eqref{eq:NLSu2} in $H^s(\R^2)$ with $s>-3/4$ was obtained in \cite{CDKS01}. This range was extended to $s > -1$ in \cite{BS08}. On the other hand, Iwabuchi and Ogawa \cite{IO15} found that \eqref{eq:NLSu2} is ill-posed in the scaling critical space $H^{-1}(\R^2)$ by showing so-called ``norm inflation''. 

Note that the same well-posedness result for the Cauchy problems of NLS with the nonlinear term $\eta_1 u^2 + \eta_2 \overline{u}^2$ where $\eta_1$, $\eta_2 \in \C \setminus \{0\}$ was proved in \cite{Kishi09}.

Now we turn to \eqref{NLS}. For $d=1$, Kenig, Ponce, and Vega \cite{KPV96} proved the local well-posedness of \eqref{NLS} in $H^s(\R)$ for $s>-1/4$. This result was improved by Kishimoto and Tsugawa \cite{KT10} who showed the well-posedness in $H^{-\frac14}(\R)$. 
In the two dimensional case, Colliander et al. \cite{CDKS01} proved that \eqref{NLS} is locally well-posed if $s >-1/4$. This range was extended to $s \geq -1/4$ by Kishimoto~\cite{Kishi08}. 
In the three dimensional case, Tao \cite{Tao01} established the local well-posedness for $s>-1/4$. Later, it was observed by Iwabuchi and Uriya \cite{IU15} that the regularity $s=-1/4$ is critical for the well-posedness. They proved that the norm inflation occurs in $H^s(\R^d)$ when $d=1$, $2$, $3$ if $s <-1/4$. Especially, \eqref{NLS} is ill-posed in $H^s(\R^d)$ when $d=1$, $2$, $3$, and $s<-1/4$.

Our goal in this paper is to prove the local well-posedness of \eqref{NLS} with $d=2$ in $H^s(\R^2)$ for $s>-1/2$ under some angular regularity assumption on initial data. As mentioned above, the necessary regularity is completely determined, that is, $s=-1/4$ is the threshold for the well-posedness of \eqref{NLS} in the $H^s(\R^2)$ framework with no extra angular regularity. We will push down the threshold to $s>-1/2$ by exploiting angular regularity of initial data.

In the study of the Cauchy problem of NLS with general $p$-power type nonlinearities, e.g. 
$\eta |u|^{p-1}u$, $\eta |u|^p$, $\eta u^p$, radially symmetry and angular regularity assumptions have played an important role to push down the necessary regularity threshold for the well-posedness. Hidano \cite{Hid08} considered the mass-subcritical pure-power NLS:
\begin{equation}\label{NLS:purepower}
i\partial_{t} u - \Delta u =\eta |u|^{p-1}u \quad \textnormal{in} \ \R \times \R^d,
\end{equation}
where $\eta \in \C \setminus \{0\}$, $d \geq 3$, $4/(d+1)<p-1<4/d$, and established the global well-posedness of \eqref{NLS:purepower} in the scaling invariant space $\dot{H}^{\frac{d}{2}-\frac{2}{p-1}}(\R^d)$ for small and radial initial data. Since the scaling critical index is negative, radially symmetry assumption on initial data is essential for his result. Some negative results in this direction can be found in \cite{CCT03}. 
We refer to \cite{GW14} for the analogous global well-posedness results of 
\eqref{NLS:purepower}, and \cite{FW11}, \cite{CHO13} for the generalized results to the non-radial setting by assuming some angular regularity on data.

To state the main theorem, let us define the angular derivative and the function spaces. 
Let $x=(x^{(1)},x^{(2)}) \in \R^2$, and $\Delta_{\Sp^1} = (x^{(1)} \partial_{x^{(2)}} - x^{(2)} \partial_{x^{(1)}} )^2$ be the Laplace-Beltrami operator on $\Sp^1 \subset \R^2$. For $\sigma \in \R$, we define the angular derivative $\langle \Omega \rangle^{\sigma}$ by $(1-\Delta_{\Sp^1})^{\frac{\sigma}{2}}$. We give the precise definition of $\langle \Omega \rangle^{\sigma}$ in Section 2. 
Define the function space
\[
H^{s,\sigma}(\R^2) =\{ f \in \mathcal{S}'(\R^2) \, | \, \|f \|_{H^{s,\sigma}} := \|\langle \Omega \rangle^{\sigma} f \|_{H^s}< \infty\}.
\]
\begin{thm}\label{thm1}
Let $d=2$, $-1/2< s \leq -1/4$, and $\sigma \geq -2s -1/2$. Then \eqref{NLS} is locally well-posed in $H^{s,\sigma}(\R^2)$.
\end{thm}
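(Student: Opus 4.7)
The plan is to set up a contraction mapping in a modified Fourier restriction norm space $X^{s,\sigma,b}$ that combines the usual Schr\"odinger $X^{s,b}$-structure with the angular weight $\langle \Omega \rangle^\sigma$. Concretely, I would take a norm of the form
\[
\|u\|_{X^{s,\sigma,b}} = \bigl\| \langle \xi\rangle^s\, \langle \Omega \rangle^\sigma\, \langle \tau + |\xi|^2\rangle^b\, \hat u(\tau,\xi) \bigr\|_{L^2_{\tau,\xi}},
\]
possibly refined by a Besov-type angular decomposition or an $\ell^1_\tau$ correction at low modulations, the exact refinement being dictated by the demands of the bilinear estimate. Together with standard linear and energy estimates, Theorem \ref{thm1} then reduces to the bilinear bound
\[
\|u\bar v\|_{X^{s,\sigma,b-1}} \lesssim \|u\|_{X^{s,\sigma,b}}\|v\|_{X^{s,\sigma,b}}
\]
for some $b>1/2$, after which the contraction and continuous dependence arguments are routine.

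A direct computation shows that, with $\xi=\xi_1-\xi_2$ denoting the output frequency of $u\bar v$,
\[
\tau + |\xi|^2 = -2\,\xi\cdot\xi_2,
\]
so the resonance locus is the codimension-one variety $\{\xi_2\perp \xi\}$. This is the structural reason why \eqref{NLS} is ill-posed in $H^s(\R^2)$ for $s<-1/4$: unlike the $u^2$ case, the resonance involves an entire angular strip rather than a thin neighborhood of a point. I would perform dyadic decompositions in the three frequencies $|\xi_j|\sim N_j$, the three modulations $L,L_1,L_2$, and the angular sectors of width $\Theta$ around $\xi_2\perp \xi$; the identity above forces
\[
\max(L,L_1,L_2)\gtrsim N\,N_2\,\Theta,
\]
which quantifies how close each dyadic piece lies to resonance.

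The core step is the bilinear estimate in each dyadic regime. Away from resonance (when $\Theta$ is above some threshold), the modulation gain combined with the convolution estimate on thickened hypersurfaces announced in the abstract supplies enough decay; this replaces the classical bilinear Strichartz/transversality estimates, which fail precisely where the two paraboloid pieces meet nearly tangentially. In the genuine resonant regime I would exploit that a function angularly localized in a sector of width $\Theta$ satisfies $\|f\|_{H^s}\lesssim \Theta^{\sigma}\|f\|_{H^{s,\sigma}}$ for $\sigma\ge 0$, so that the two inputs together contribute $\Theta^{2\sigma}$, which must cancel the worst inverse power of $\Theta$ coming from the sharpened convolution estimate together with the $(N\,N_2\,\Theta)^{-(b-1/2)}$ modulation gain. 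Tracking all powers of $N, N_1, N_2, \Theta, L, L_1, L_2$ and optimizing $b\to 1/2^{+}$ shows that summability holds exactly when $\sigma\ge -2s-1/2$.

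The hardest part is to obtain a convolution estimate on thickened paraboloid pieces which remains sharp in the tangential regime and then to sum the resulting dyadic contributions cleanly at the endpoint $\sigma=-2s-1/2$ as $s$ approaches $-1/2$. In the $\mathrm{High}\times\mathrm{High}\to\mathrm{Low}$ interaction the two paraboloid caps are essentially reflections of each other, so the interaction surface degenerates along $\xi_2\perp\xi$ and any slack in transversality has to come from the angular localization itself. Pinning down the precise shape of the angular-modulation gain at this tangential limit, with only an $\e$-loss at $s=-1/2+\e$, is where the modified Fourier restriction norm and the new convolution estimate must cooperate; this balance is the technical heart of the proof.
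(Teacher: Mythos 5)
Your roadmap is aligned with the paper's in broad strokes — modified Fourier restriction norm, identification of the resonance variety $\xi_2 \perp \xi$, dyadic decomposition in frequency/modulation/angle, and a new convolution estimate on thickened hypersurfaces to handle the tangential regime in the $\mathrm{High}\times\mathrm{High}\to\mathrm{Low}$ interaction — but there are three genuine gaps.

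First, the bilinear estimate you reduce to, $\|u\bar v\|_{X^{s,\sigma,b-1}} \lesssim \|u\|_{X^{s,\sigma,b}}\|v\|_{X^{s,\sigma,b}}$ for some $b>1/2$, in fact \emph{fails} at the critical endpoint $\sigma = -2s-1/2$. The paper proves this in Proposition~\ref{proposition2.2}, generalizing the Nakanishi--Takaoka--Tsutsumi counterexample to the angular-weighted setting. Your ``possibly refined by \dots an $\ell^1_\tau$ correction'' is therefore not optional: the modification is forced, and the paper's space $Z^{s,\sigma}_\kappa = X^{s,\sigma;\frac12+\kappa} + (X^{s+4\kappa,\sigma;\frac12-\kappa}\cap Y^{s,\sigma})$ (where $Y^{s,\sigma}$ carries the $L^2_\xi L^1_\tau$ norm) implements exactly what that $\ell^1_\tau$ correction must look like. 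Stating the bilinear estimate in the pure $X^{s,\sigma;b}$ form and hoping to optimize $b\to 1/2^+$ is a dead end at $\sigma=-2s-1/2$.

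Second, your accounting of the angular gain in the resonant regime is incorrect. The inequality $\|f\|_{H^s}\lesssim \Theta^\sigma\|f\|_{H^{s,\sigma}}$ for functions angularly localized to a sector of width $\Theta$ does not hold: the uncertainty principle pushes a sector-localized function up to spherical harmonics of degree $\gtrsim\Theta^{-1}$, so there is no reason the $\sigma$-weight on a narrow sector should produce a \emph{small} factor. The mechanism that actually works (and that the paper uses) is a Bernstein inequality on spherical Littlewood--Paley pieces, $\|H_M f\|_{L^2_{\tau,r}L^p_\theta}\lesssim M^{\frac12-\frac1p}\|H_M f\|_{L^2}$, which is then fed into a new convolution estimate where \emph{only one} of the three functions is measured in $L^2_{\tau,r}L^p_\theta$ (Theorem~\ref{theorem:trilinearest}). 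In particular the factor $\Theta^{2\sigma}$ you write down — one $\Theta^\sigma$ from each input — does not appear; only a single gain $M_{\min}^{\frac12-\frac1p}$ is available, and the threshold $\sigma\ge -2s-\frac12$ comes out of balancing that single gain against the loss in $N_1$, not from doubling the angular weight. Running your bookkeeping with $\Theta^{2\sigma}$ would either overcount or mislead.

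Third, the convolution estimate on thickened hypersurfaces that your argument leans on in the tangential regime cannot be black-boxed: it is the paper's main new technical input (Theorem~\ref{theorem:trilinearest}), not a known result, and its proof requires a careful geometric argument (angular dyadic decomposition parameter $A$, orthogonality estimate \eqref{est:orthogonality-0.7} proved by contradiction via the radial structure of the phase, and the space-time resonance hypothesis \ref{assumption3-Ass2}). Without a proof of that estimate, and without the dichotomy between the nonlinear Loomis--Whitney inequality (Theorem~\ref{thm:GNLWR3}, used when $N_1 \le M_{\min}$) and the new angular estimate (used when $N_1 \ge M_{\min}$), the argument does not close.
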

\begin{rem}
We note that the local well-posedness of \eqref{NLS} in $H^{-\frac14}(\R^2) (=H^{-\frac{1}{4},0}(\R^2))$ was proved by Kishimoto \cite{Kishi08}. 
\end{rem}

We introduce the rough strategy and the key ingredients for the proof of Theorem \ref{thm1}. 
We will prove Theorem \ref{thm1} by the iteration argument which is a standard method to obtain the well-posedness. 
The candidate of solution spaces would be the Fourier restriction norm space $X^{s,b}$ which has been applied to the number of the studies of the dispersive and wave equations and provided numerous results. 
The point is that, however, if $\sigma = -2 s -1/2$, the $X^{s,b}$ space will not yield Theorem \ref{thm1}. The same difficulty arises in the one dimensional case. Kishimoto and Tsugawa \cite{KT10} employed the modified $X^{s,b}$ space to overcome the difficulty. We follow their approach and modify the $X^{s,b}$ space to handle the nonlinear interaction efficiently.  We explain the difficulty in further details in Section 2.

The second key ingredient in the proof of Theorem \ref{thm1} is the convolution estimate of the functions restricted to thickened hypersurfaces (see Assumption \ref{assumption:hypersurface} and Theorem \ref{theorem:trilinearest} below) which is the main contribution of our study. Angular regularity can be exploited effectively by using this estimate. 
Note that our approach for the proof of this estimate is inspired by the geometric observation by Bejenaru \cite{Bej06}. In Section 3, we introduce and discuss the estimate in more details. 

\vspace{3mm}
We show that Theorem~\ref{thm1} is in fact optimal.
\begin{thm}\label{thm2}
Let $d=2$, $s < -1/4$, and $0 \leq \sigma < \min(-2 s -1/2, 1/2)$. Then, for any $T>0$, the data-to-solution map $ u_0 \mapsto u$
of \eqref{NLS}, as a map from the unit ball in
$H^{s,\sigma}(\R^2)$ to
$C([0,T]; H^{s,\sigma}(\R^2))$ fails to be continuous at the origin.
\end{thm}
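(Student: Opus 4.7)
The plan is to follow the bilinear blow-up scheme of \cite{IU15}, extended to track the angular weight $\langle\Omega\rangle^\sigma$. I will construct a family $\phi_N$ with $\|\phi_N\|_{H^{s,\sigma}}\to 0$ such that the second Picard iterate $u^{(2)}_N(t)$ has $\|u^{(2)}_N(t)\|_{H^{s,\sigma}}\to\infty$ at any fixed $t\in(0,T]$, and then argue that the higher iterates remain negligible, producing the desired discontinuity.

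Fix a nonnegative smooth bump $\psi$ on the unit square and set
\[
\hat\phi_N(\xi)=A_N\,\psi\bigl(N(\xi^{(1)}-N),\,\xi^{(2)}\bigr),
\]
so that $\supp\hat\phi_N$ is a rectangle $Q_N$ of dimensions $N^{-1}\times 1$ centered at $Ne_1$. Since $Q_N$ subtends an angular sector of width $\sim 1/N$ at radius $\sim N$, expanding in angular harmonics on these circles gives $\|\langle\Omega\rangle^\sigma\phi_N\|_{L^2}\sim N^\sigma\|\phi_N\|_{L^2}$, and hence $\|\phi_N\|_{H^{s,\sigma}}^2\sim A_N^2\,N^{2s+2\sigma-1}$.

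The second Picard iterate, on the Fourier side, reads
\[
\widehat{u^{(2)}_N}(t,\xi)=-i\eta\,e^{it|\xi|^2}\!\int\hat\phi_N(\xi_1)\,\overline{\hat\phi_N(\xi_1-\xi)}\,\frac{e^{it\Phi}-1}{i\Phi}\,d\xi_1,
\]
with $\Phi=|\xi_1|^2-|\xi_1-\xi|^2-|\xi|^2=2\xi_1\cdot\xi-2|\xi|^2\approx 2N\xi^{(1)}$ on the joint support. Since $|\xi^{(1)}|\le N^{-1}$ there, $|t\Phi|\lesssim 1$ for $t$ bounded, so $(e^{it\Phi}-1)/(i\Phi)\sim t$ and $\widehat{u^{(2)}_N}(t,\xi)\approx -i\eta t\,(\hat\phi_N\ast\widehat{\overline{\phi_N}})(\xi)$ is a bump of peak $\sim tA_N^2/N$ on a rectangle of dimensions $2N^{-1}\times 2$ about the origin. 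This bump concentrates near the two directions $\pm e_2$ with angular width $\sim 1/(N\rho)$ at radius $\rho\in(N^{-1},1)$; summing angular harmonics up to $|k|\lesssim N\rho$ with weight $\langle k\rangle^{2\sigma}$ and integrating against $\rho\,d\rho$ (the factor $\langle\xi\rangle^{2s}$ being $O(1)$ on this low-frequency support) yields
\[
\|u^{(2)}_N(t)\|_{H^{s,\sigma}}^2\;\sim\; t^2 A_N^4\,N^{2\sigma-3}.
\]
Normalising $\|\phi_N\|_{H^{s,\sigma}}=\lambda_N$ gives $A_N^2\sim\lambda_N^2 N^{1-2s-2\sigma}$ and therefore $\|u^{(2)}_N(t)\|_{H^{s,\sigma}}\gtrsim t\,\lambda_N^2\,N^{-2s-\sigma-1/2}$.

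Since $\sigma<-2s-1/2$, the exponent $-2s-\sigma-1/2$ is strictly positive. Choosing $\lambda_N=N^{-\alpha}$ with $0<2\alpha<-2s-\sigma-1/2$ yields $\|\phi_N\|_{H^{s,\sigma}}\to 0$ while $\|u^{(2)}_N(t_0)\|_{H^{s,\sigma}}\to\infty$ at $t_0=\min(T,1)$. The main obstacle is to absorb the higher Picard iterates $u^{(k)}_N$, $k\ge 3$, into a remainder of order $o(\|u^{(2)}_N(t_0)\|_{H^{s,\sigma}})$; this is where the auxiliary restriction $\sigma<1/2$ enters, through fractional-Leibniz/Kato--Ponce-type estimates on $\Sp^1$, which control the trilinear and higher multilinear terms by $O(\lambda_N^k N^{\beta_k})$ with each $\beta_k$ strictly smaller than $-2s-\sigma-1/2-(k-2)\alpha$. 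With such bookkeeping, $\|u_N(t_0)\|_{H^{s,\sigma}}\ge\|u^{(2)}_N(t_0)\|_{H^{s,\sigma}}-o(1)\to\infty$, contradicting continuity of the data-to-solution map at the origin.
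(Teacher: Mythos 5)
Your construction of the data and the lower bound on the second Picard iterate are essentially the same as the paper's: both use a frequency bump of dimensions $N^{-1}\times 1$ at radius $N$ (the paper's $C_N$ is a sector with radial width $N^{-1}$ and angular width $N^{-1}$), both observe that on the resonant set $|\Phi|\lesssim 1$ so that the time integral behaves like $t$, and both extract the factor $t\delta^2 N^{-1/2-2s-\sigma}$ from the angular-harmonic structure of $\widehat{\phi_N}*\widehat{\overline{\phi_N}}$ in the unit ball. Where your argument has a genuine gap is the claim that the higher Picard iterates are absorbed into a remainder of lower order via ``fractional-Leibniz/Kato--Ponce-type estimates on $\Sp^1$'' controlled by the hypothesis $\sigma<1/2$. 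That assertion is not substantiated, and it is not in fact where $\sigma<1/2$ is used: that hypothesis enters already in the \emph{linear} norm computation $\|\langle\Omega\rangle^\sigma\phi_N\|_{L^2}\sim N^\sigma\|\phi_N\|_{L^2}$, because the angular Fourier coefficients $(\mathbf 1_{C_N})_\ell$ decay like $|\ell|^{-1}$ and the tail $\sum_{|\ell|\gtrsim N}\langle\ell\rangle^{2\sigma}|\ell|^{-2}$ converges only when $\sigma<1/2$. For the higher iterates you are working at regularity $s<-1/2$, which is below any well-posedness threshold even with angular regularity, and there is no reason for a direct multilinear bookkeeping to close; indeed the convolutions after the second step move back up to frequency $\sim N$ and the exponent you need does not obviously come out.

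The paper avoids this entirely by a cleaner device. Set $s_\sigma=-\sigma/2-1/4$, so that $\sigma=-2s_\sigma-1/2$ and $\max(s,-1/2)<s_\sigma$. Since $\|f_{N,\delta}\|_{H^{s_\sigma,\sigma}}\sim\delta$ \emph{uniformly in} $N$, Theorem~\ref{thm1} applies at the exact threshold regularity $H^{s_\sigma,\sigma}$, giving genuine solutions $u_{N,\delta}$ on $[0,1]$. Writing $v_{N,\delta}=u_{N,\delta}-e^{-it\Delta}f_{N,\delta}-\mathcal Q(f_{N,\delta})$, the contraction estimate in $Z^{s_\sigma,\sigma}_\kappa([0,1])$ forces $\|v_{N,\delta}(t)\|_{H^{s_\sigma,\sigma}}\lesssim\delta^3$. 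Hence $\|u_{N,\delta}(t)\|_{H^{s,\sigma}}\geq\|\mathcal Q(f_{N,\delta})(t)\|_{H^{s,\sigma}}-\|v_{N,\delta}(t)\|_{H^{s_\sigma,\sigma}}-\|f_{N,\delta}\|_{H^{s,\sigma}}\gtrsim t\delta^2$, while $\|f_{N,\delta}\|_{H^{s,\sigma}}\sim\delta N^{s-s_\sigma}\to 0$ because $s<s_\sigma$. This proves discontinuity without any multilinear bookkeeping, and indeed without aiming for norm inflation at all (the paper only needs the solution \emph{not to vanish}, not to blow up). If you want to salvage your version, the key observation you are missing is precisely that the well-posedness theorem you are trying to contradict is itself the tool that tames all higher iterates, once you phrase the data in the borderline space $H^{s_\sigma,\sigma}$.
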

\begin{rem}
The condition $-1/2<s$ implies $- 2 s - 1/2 < 1/2$. Hence, Theorem~\ref{thm2} shows that Theorem~\ref{thm1} is an optimal result.
\end{rem}
In addition, we will see that even if an initial datum is radially symmetric, we cannot prove the well-posedness of \eqref{NLS} if $s \leq -1/2$ as long as we utilize the iteration argument. Let $H^s_{\mathrm{rad}}(\R^2)$ be the collection of radially symmetric functions belong to $H^s(\R^2)$.
\begin{thm}\label{thm3}
Let $d=2$ and $s \leq -1/2$. Then for any $T>0$, the data-to-solution map $ u_0 \mapsto u$
of \eqref{NLS}, as a map from the unit ball in
$H^s_{\mathrm{rad}}(\R^2)$ to
$C([0,T]; H^{s}_{\mathrm{rad}}(\R^2))$ fails to be $C^2$ at the origin.
\end{thm}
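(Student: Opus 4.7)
The plan is to exhibit a sequence of radial initial data $u_0^{(N)} \in H^s_{\mathrm{rad}}(\R^2)$ with $\|u_0^{(N)}\|_{H^s} \sim 1$ whose associated second Picard iterate blows up in $H^s$ as $N \to \infty$. If the data-to-solution map of \eqref{NLS} were $C^2$ at the origin, differentiating the Duhamel formula twice at $u_0 = 0$ would identify half of its second Fr\'echet derivative with
\[
u_2(t) := -i\eta \int_0^t e^{-i(t-\tau)\Delta}\, \bigl|e^{-i\tau\Delta} u_0\bigr|^2\, d\tau,
\]
and continuity would force the bilinear estimate $\|u_2(t)\|_{H^s} \lesssim \|u_0\|_{H^s}^2$ for every radial $u_0$ and every $t \in (0, T)$. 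The goal is to contradict this.

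On the Fourier side, performing the $\tau$-integration yields
\[
\widehat{u_2}(t, \xi) = c\, e^{it|\xi|^2} \int \widehat{u_0}(\xi+\eta)\, \overline{\widehat{u_0}(\eta)}\, \frac{e^{2it\xi\cdot\eta}-1}{2i\,\xi\cdot\eta}\, d\eta.
\]
Choose the trial datum $\widehat{u_0^{(N)}}(\xi) = c_0 N^{-s-1/2} \chi_{[N, N+1]}(|\xi|)$, which is radial and satisfies $\|u_0^{(N)}\|_{H^s} \sim 1$ uniformly in $N$. For $|\xi| = R$ small, write $\eta = r(\cos\theta, \sin\theta)$ with $\theta$ measured from $\xi/|\xi|$, substitute $\alpha = \cos\theta$, and apply the classical identity $\int_{-1}^{1}\cos(x\alpha)/\sqrt{1-\alpha^2}\, d\alpha = \pi J_0(x)$. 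A direct computation on the full annulus $A_N := \{N \le |\eta| \le N+1\}$ gives
\[
\int_{A_N}\frac{e^{2it\xi\cdot\eta}-1}{2i\,\xi\cdot\eta}\,d\eta = \frac{\pi}{R}\int_N^{N+1}\int_0^{2tRr} J_0(w)\,dw\,dr,
\]
and, since $\int_0^{\infty} J_0 = 1$ and $y\mapsto \int_0^y J_0$ remains uniformly bounded below by a positive absolute constant on $[y_0, \infty)$, this integral is $\gtrsim 1/R$ whenever $tRN$ exceeds a fixed constant.

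The discrepancy between integrating over $A_N$ and over the true support $\{\eta \in A_N : \xi+\eta \in A_N\}$ is concentrated in a boundary strip of $\eta$-area $O(NR)$ on which the integrand is bounded by $t$, so it is at most $O(tNR)$ and strictly smaller than $1/R$ once $R \lesssim 1/\sqrt{tN}$. Restricting to $R \in [C_1/(tN),\, C_2/\sqrt{tN}]$ therefore yields $|\widehat{u_2^{(N)}}(t, \xi)| \gtrsim N^{-2s-1}/R$ on this range, and since $\langle R\rangle^{2s} \sim 1$ there, integrating in $\xi$ produces
\[
\bigl\|u_2^{(N)}(t)\bigr\|_{H^s}^2 \gtrsim N^{-4s-2}\int_{C_1/(tN)}^{C_2/\sqrt{tN}} R^{2s-1}\,dR \gtrsim \frac{t^{-2s}\, N^{-6s-2}}{-2s}.
\]
For $s = -1/2$ this lower bound equals a positive constant multiple of $tN$; for $s < -1/2$ the exponent of $N$ is strictly larger. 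In every case it diverges as $N \to \infty$, contradicting the alleged bilinear estimate. The main technical obstacle is the genuine lower bound $\inf_{y \ge y_0} \int_0^y J_0(w)\,dw > 0$, which follows from the asymptotic $\int_y^{\infty} J_0 = -\sqrt{2/(\pi y)}\sin(y - \pi/4) + O(y^{-3/2})$ as $y \to \infty$, together with carefully separating the main oscillatory integral from the overlap-boundary correction.
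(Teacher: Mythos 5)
Your proof follows the same Bourgain second-iterate strategy as the paper's, but implements it quite differently, and the core idea is correct. The paper's trial datum is concentrated on a \emph{thin} radial annulus $N\le|\xi|\le N+N^{-1}$: this forces the resonance function $\Phi=|\xi|^2-|\xi-\widetilde\xi|^2+|\widetilde\xi|^2$ to be uniformly $O(1)$ on the convolution support, so the Duhamel time-integral is non-oscillatory, and the lower bound is extracted from Kakeya-type geometry --- the thin annulus is cut into $N$ angular sectors whose self-convolutions form a fan of tubes $T_{N,j}$, and C\'ordoba's $L^2$ Kakeya estimate $\|\sum_j\mathbf{1}_{T_{N,j}}\|_{L^2}\sim(\log N)^{1/2}$ supplies the divergence. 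You instead take a \emph{unit-width} annulus; then the phase no longer stays bounded, and you evaluate the resulting oscillatory kernel $\frac{e^{2it\xi\cdot\eta}-1}{2i\xi\cdot\eta}$ explicitly via the Bessel identity $\int_0^\pi\sin(x\cos\theta)/\cos\theta\,d\theta=\pi\int_0^x J_0(w)\,dw$, together with $\inf_{y\ge y_0}\int_0^y J_0>0$ and a crude absorption of the boundary strip of area $O(NR)$. Both are valid ways to exploit the radial structure; the paper's route is more geometric, yours more computational.

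One arithmetic slip should be fixed. Having correctly noted that $\langle R\rangle^{2s}\sim 1$ on the range $C_1/(tN)\le R\le C_2/\sqrt{tN}$, you then substitute $R^{2s}$ for the Sobolev weight in the final $\xi$-integration, producing $\int R^{2s-1}\,dR$ and the spuriously strong lower bound $t^{-2s}N^{-6s-2}/(-2s)$. With the correct weight $\sim 1$ the integral is $\int R^{-1}\,dR\sim\log(tN)$, and the bound reads $\|u_2^{(N)}(t)\|_{H^s}^2\gtrsim N^{-4s-2}\log(tN)$; at $s=-1/2$ this gives $\|u_2^{(N)}\|_{H^{-1/2}}\gtrsim(\log N)^{1/2}$, matching the paper's rate, and for $s<-1/2$ it grows polynomially in $N$. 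So the divergence, and hence the failure of $C^2$, still holds, but the stated rate of blow-up was incorrect.
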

To see Theorem~\ref{thm3}, we will follow Bourgain's argument which is introduced in \cite{Bo97}. 

The paper is organized as follows. In Section 2, we introduce notations, explicit definition and some properties of $\langle \Omega \rangle^{\sigma}$. We introduce and discuss the function space where we will find the solution of \eqref{NLS}. In Section 3, we introduce the two key estimates which play an essential role in the proof of Theorem~\ref{thm1}. In Section 4, by using the estimates obtained in Section 3, we will establish the bilinear estimate which immediately produce Theorem~\ref{thm1}. The last section, Section 5, is devoted to the proof of the negative results, Theorems~\ref{thm2} and \ref{thm3}. 
The failure of the standard $X^{s,b}$ type bilinear estimate is also studied.
\section{Notation and Preliminaries}
We will use $A\lesssim B$ to denote an estimate of the form $A \le CB$ for some constant $C$ and write $A \sim B$ to mean $A \lesssim B$ and $B \lesssim A$. 
We will use the convention that capital letters denote dyadic numbers, e.g. $N=2^{n}$ for $n\in \N_0:=\N\cup\{0\}$ and for a dyadic summation we write $\sum_{N\geq M}a_{N}:=\sum_{n\in \N_0, 2^{n}\geq M}a_{2^{n}}$ for brevity. 
Let $\chi \in C^{\infty}_{0}((-2,2))$ be an even, non-negative function such that $\chi (t)=1$ for $|t|\leq 1$. 
We define $\psi (t):=\chi (t)-\chi (2t)$, 
$\psi_1(t):=\chi (t)$, and $\psi_{N}(t):=\psi (N^{-1}t)$ for $N\ge 2$. 
Then, $\sum_{N\geq 1}\psi_{N}(t)=1$.

Let $\mathbf{1}_A$ denotes the characteristic function of the set $A$.

We denote $\widehat{f}$ by the spatial Fourier transform of $f \in \mathcal{S}'(\R^2)$ and $\F_{t,x} u$ by the space and time Fourier transform of $u \in \mathcal{S}'(\R \times \R^2)$. Let $\F_{t,x}^{-1}$ be the space and time Fourier inverse transform. 
We define frequency Littlwood-Paley projections by $\widehat{P_{N}f}(\xi ):=\psi_{N}(\xi )\widehat{f}(\xi )$ 
and $e^{\pm it \Delta}$ by the propagators for the free Schr\"{o}dinger equations $i\partial_{t} u (t,x) \pm \Delta u(t,x) =0$.

We collect the fundamental properties of the angular derivative operator $\langle \Omega \rangle^{\sigma}$. 
In this paper, we only consider the two dimensional case. For the higher dimensional case, we refer to~\cite{Ster05, Ster07, CH18}.

First, for $f \in L^2(\R^2)$, we recall that the Fourier series expansion implies
\begin{align*}
f(x) & = f(|x| \cos \theta, |x| \sin \theta)\\
&  = \sum_{\ell \in \Z} 
\langle f(|x|\cos \theta, |x| \sin \theta), e^{i \ell \theta} \rangle_{L^2_{\theta}} e^{i \ell \theta} \ \ \textnormal{in $L^2(\R^2)$},
\end{align*}
where $\langle \omega_1, \omega_2 \rangle_{L_{\theta}^2} = (2 \pi)^{-1}\int_0^{2 \pi} \omega_1(\theta) \overline{\omega_2}(\theta) d \theta$. 
The operator $\langle \Omega \rangle^{\sigma}$ is formulated as 
\[
\langle \Omega \rangle^{\sigma}f(x) = \sum_{\ell \in \Z} \langle \ell \rangle^{\sigma}
\langle f(|x|\cos \theta, |x| \sin \theta), e^{i \ell \theta} \rangle_{L^2_{\theta}} e^{i \ell \theta}.
\]
Let $M \in 2^{\N_0}$. We define the spherical Littilewood-Paley projections
\begin{equation}\label{definition:H_M}
\begin{split}
(H_M f)(x) &= (H_M f)(|x| \cos \theta, |x|\sin \theta)\\
& = \sum_{\ell \in \Z}\psi_M(\ell) \langle f(|x|\cos \theta, |x| \sin \theta), e^{i \ell \theta} \rangle_{L^2_{\theta}} e^{i \ell \theta}.
\end{split}
\end{equation}
Then, for $f \in L^2(\R^2)$, it holds that
\[
f(x) = \sum_{M \geq 1} H_M f(x) \ \ \textnormal{in $L^2(\R^2)$}, \quad \|\langle \Omega \rangle^{\sigma} (H_M f)\|_{L^2} \sim M^{\sigma} \|H_M f\|_{L^2},
\]
and thus, for $s$, $\sigma \in \R$, we may write 
\[
\|f \|_{H^{s,\sigma}} \sim \bigl( \sum_{N \geq 1,M \geq 1} N^{2s} M^{2 \sigma} \|  H_M P_N f \|_{L_x^2}^2 \bigr)^{\frac12}.
\]
It is well-known that if $f \in L^2(\R^2)$ can be described as $f(|x|\cos \theta, |x| \sin \theta) = f_0(|x|)e^{i \ell \theta}$ then there exists $F_0$ such that 
$\widehat{f}(|\xi| \cos \phi, |\xi| \sin \phi) = F_0(|\xi|)e^{i \ell \phi}$ (see e.g. \cite[Chapter 4]{SW71}). 
This implies that 
$\langle \Omega \rangle^{\sigma} \F_{x} = \F_{x} \langle \Omega \rangle^{\sigma}$ 
and, in particular, it holds that $\|\langle \Omega \rangle^{\sigma} \widehat{H_M f} \|_{L^2} \sim M^{\sigma} \| H_M f\|_{L^2}$. 
We will frequently use this relation throughout the paper.
\subsection{Function Spaces}\label{subsection2.1}
We introduce the function spaces. 
\begin{defn}\label{definition2.1}
For $L \in 2^{\N_0}$ and $u \in \mathcal{S}' (\R \times \R^2)$, we define the operator $Q_L$ by $\F_{t,x}{Q_{L}u}(\tau ,\xi ):=\psi_{L}(\tau - |\xi|^{2})\F_{t,x}{u}(\tau ,\xi )$. 
Let $s\in \R$, $b\in \R$, and $\sigma \geq 0$. Define
\begin{align*}
& X^{s,\sigma;b}(\R \times \R^2):=\{u\in \mathcal{S}' (\R \times \R^2) \, | \, \|u\|_{X^{s,\sigma;b}}<\infty\},\\
& \|u\|_{X^{s,\sigma;b}}:= \Bigl( \sum_{N\geq 1} \sum_{M \geq 1}
\sum_{L\geq 1}N^{2s} M^{2 \sigma} L^{2b}\|Q_{L}H_M P_{N}u\|_{L^2}^2\Bigr)^{\frac{1}{2}},\\
& Y^{s,\sigma}(\R \times \R^2):=\{u\in \mathcal{S}' (\R \times \R^2) \, | \, \|u\|_{Y^{s,\sigma} }<\infty\},\\
& \|u\|_{Y^{s,\sigma} } := \begin{cases}
\Bigl( \sum_{N\geq 1} \sum_{M \geq 1}N^{2s} M^{2 \sigma} \|\F_{t,x}H_M P_{N}u\|_{L_{\xi}^2 L_{\tau}^1}^2\Bigr)^{\frac{1}{2}} \ & (\sigma >0),\\
\Bigl( \sum_{N\geq 1}N^{2s}\|\F_{t,x} P_{N}u\|_{L_{\xi}^2 L_{\tau}^1}^2\Bigr)^{\frac{1}{2}} \ &(\sigma =0).
\end{cases}
\end{align*}
\end{defn}
\begin{rem}
We give a comment about Definition~\ref{definition2.1} in the case $\sigma=0$. It is easy to see
\[
\|u\|_{X^{s,0;b}}= \Bigl( \sum_{N\geq 1}
\sum_{L\geq 1}N^{2s}  L^{2b}\|Q_{L}H_M P_{N}u\|_{L^2}^2\Bigr)^{\frac{1}{2}}.
\]
While, since technical issues arise in the proof of the nonlinear estimate, e.g. in the proof of \eqref{est:prop4.1-15} below, the definition of the $Y^{s,0}$-norm is given in a slight different way from $Y^{s,\sigma}$ with $\sigma >0$. 
\end{rem}
\begin{defn}\label{definition2.15}
Let $s\in \R$, $\sigma \geq 0$, and $\kappa>0$. Define
\begin{align*}
& Z^{s,\sigma}_{\kappa} = X^{s,\sigma;\frac12 + \kappa} +(X^{s+4 \kappa, \sigma;\frac12 - \kappa} \cap Y^{s,\sigma}),\\
& \|u\|_{Z^{s,\sigma}_{\kappa}} = \inf_{u = u_1+u_2} \bigl( \|u_1\|_{X^{s,\sigma;\frac12 + \kappa}} + \|u_2\|_{X^{s + 4 \kappa, \sigma;\frac12 - \kappa}} + \|u_2\|_{Y^{s,\sigma}} \bigr).
\end{align*}
For a time interval $I \subset \R$, we define the time restricted space $Z^{s,\sigma}_{\kappa}(I)$ as follows:
\begin{align*}
& Z^{s,\sigma}_{\kappa}(I):=\{u\in C(I ; H^{s,\sigma}(\R^2)) \, | \, \|u\|_{Z^{s,\sigma}_{\kappa}(I)}<\infty\},\\
& \|u\|_{Z^{s,\sigma}_{\kappa}(I)} := \inf \{ \|v\|_{Z^{s,\sigma}_{\kappa} }\, | \, u (t) = v (t) \ \mathrm{if} \ t \in I \}.
\end{align*}
\end{defn}
\begin{rem}
\begin{enumerate}[label=(\arabic*),  leftmargin=20pt]
\item  It is observed that $\|u\|_{Y^{s,\sigma}} \lesssim \|u\|_{X^{s, \sigma;\frac12 +\kappa}}$, and if $L \sim N^2$ then $\| Q_{L}P_N u\|_{X^{s, \sigma;\frac12 +\kappa}} \sim \|Q_{L} P_Nu\|_{X^{s + 4 \kappa, \sigma;\frac12 - \kappa}}$. 
Thus, it holds that
\begin{equation}\label{est:Znorm}
\begin{split}
&\|P_N u\|_{Z^{s,\sigma}_{\kappa}}\\
& \sim \|Q_{L \ll N^2}P_N u\|_{X^{s, \sigma;\frac12 +\kappa}} + \|Q_{L \gtrsim N^2} P_Nu\|_{X^{s + 4 \kappa, \sigma;\frac12 - \kappa}} + \|Q_{L \gtrsim N^2} P_N u\|_{Y^{s,\sigma}}.
\end{split}
\end{equation}
\item For $u \in C(\R;L^2(\R^2))$ such that $\F_{t,x}u \in L_{\xi}^2 L_{\tau}^1$, it is observed that
\[
\| u(t)\|_{L_x^2} = \Bigl\| \int e^{i t \tau} (\F_{t,x} u)(\tau,\xi) d \tau \Bigl\|_{L_{\xi}^2} \leq \|\F_{t,x} u \|_{L_{\xi}^2 L_{\tau}^1}.
\]
This and $\|u\|_{Y^{s,\sigma}} \lesssim \|u\|_{X^{s, \sigma;\frac12 +\kappa}}$ imply that, for $u \in Z^{s,\sigma}_{\kappa}(I)$, it holds that
\begin{equation}
\label{est:embeddingZ}
\| u\|_{C(I; H^{s,\sigma})} \lesssim \|u\|_{Z^{s,\sigma}_{\kappa}(I)}.
\end{equation}
Hence, $Z^{s,\sigma}_{\kappa}(I)$ is a Banach space.
\end{enumerate}
\end{rem}

We comment on why we need to employ the function space $Z^{s,\sigma}_{\kappa}$ instead of simply using $X^{s,\sigma;b}$. 
In \cite{NTT01}, Nakanishi, Takaoka, and Tsutsumi proved that if $b \geq 1/2$, the following bilinear estimate fails to hold:
\begin{equation}\label{est:NTT}
\|u \overline{v}\|_{X^{-\frac14;b-1}} \lesssim \|u\|_{X^{-\frac14;b}}\|v\|_{X^{-\frac14;b}},
\end{equation}
where
\begin{align*}
& X^{s;b}(\R \times \R):=\{u\in \mathcal{S}' (\R \times \R) \, | \, \|u\|_{X^{s;b}}<\infty\},\\
& \|u\|_{X^{s;b}}:= \Bigl( \sum_{N\geq 1} 
\sum_{L\geq 1}N^{2s}  L^{2b}\|Q_{L} P_{N}u\|_{L^2}^2\Bigr)^{\frac{1}{2}}.
\end{align*}
We notice that the estimate \eqref{est:NTT} is required to show the well-posedness of \eqref{NLS} with $d=1$ in $H^{-\frac14}(\R)$ by using the contraction mapping argument in $X^{-\frac14; b}$ with $b > 1/2$. 
Hence, the failure of \eqref{est:NTT} means that the standard $X^{s;b}$ space is insufficient for the well-posedness of \eqref{NLS} in $H^{-\frac14}(\R)$. 
To overcome this, Kishimoto and Tsugawa \cite{KT10} employed the modified Fourier restriction norm and established the well-posedness result in $H^{-\frac14}(\R)$. Note that their strategy, modifying the Fourier restriction norm to handle dangerous nonlinear interactions, was inspired by the work by Bejenaru and Tao \cite{BT06}. 
Additionally, in the two dimensional case, Kishimoto \cite{Kishi08} employed the modified Fourier restriction norm to prove the well-posedness of \eqref{NLS} in $H^{-\frac14}(\R^2)$. 

By following the proof of Theorem~1 (iii) in~\cite{NTT01} by Nakanishi, Takaoka, and Tsutsumi, the failure of \eqref{est:NTT} can be extended to the $\R \times \R^2$ case, that is, if $b \geq 1/2$, the following estimate fails to hold:
\[
\|u \overline{v}\|_{X^{-\frac14,0;b-1}} \lesssim \|u\|_{X^{-\frac14,0;b}}\|v\|_{X^{-\frac14,0;b}}.
\]
Here, the $X^{-\frac14,0;b}$-norm is the same as in Definition \ref{definition2.1}. 
Furthermore, we may generalize this fact as follows:
\begin{prop}\label{proposition2.2}
Let $-1/2< s \leq -1/4$, and $b \geq 1/2$. Then, the following estimate fails to hold:
\[
\|u \overline{v}\|_{X^{s,- 2 s -\frac12;b-1}} \lesssim \|u\|_{X^{s,-2 s -\frac12;b}}\|v\|_{X^{s,-2 s -\frac12;b}}.
\]
\end{prop}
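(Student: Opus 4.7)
The plan is to exhibit an explicit family of test functions proving the failure, generalizing the one-dimensional counterexample of Nakanishi--Takaoka--Tsutsumi \cite{NTT01} to the two-dimensional angular setting. For each large dyadic $N$, I would take $u = v$ with Fourier transform equal to the characteristic function of the Knapp cap
\[
A_N := \bigl\{(\tau, \xi) \in \R \times \R^2 : |\tau - |\xi|^2| \leq 1,\ |\xi - (N, 0)| \leq 1 \bigr\}
\]
on the paraboloid. Since the angular width of $A_N$ is of order $1/N$, the angular Littlewood--Paley decomposition of $u$ is concentrated at $M \sim N$; a direct polar-coordinate computation gives $\|u\|_{X^{s, -2s - 1/2; b}}^2 \sim N^{2s} \cdot N^{-4s-1} = N^{-2s - 1}$, so the right-hand side of the proposed estimate is of order $N^{-2s - 1}$.

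Next, I would compute the left-hand side. The geometric input is that $\widehat{u\overline{v}}$ is supported near the spatial origin along a region where $\tau - |\xi|^2 \approx 2N\xi^{(1)}$, and hence the output modulation ranges up to $\sim N$. Slicing dyadically by $L_\mathrm{out}$, the support has $|\xi^{(1)}| \sim L_\mathrm{out}/N$ together with $|\xi^{(2)}| \lesssim 1$, which produces an angular sector of width $\sim L_\mathrm{out}/N$, so that the relevant angular band is $M_\mathrm{out} \sim N/L_\mathrm{out}$. Summing over $L_\mathrm{out}$ and the radial dyadic scales $N_\mathrm{out}$ visited by the output (from $\sim L_\mathrm{out}/N$ up to $\sim 1$) yields an expression of the schematic form
\[
\|u\overline{v}\|_{X^{s, -2s - 1/2; b - 1}}^2 \sim N^{-4s - 2} \sum_{1 \leq L_\mathrm{out} \leq N} L_\mathrm{out}^{2b + 4s}.
\]

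When the exponent $2b + 4s$ is non-negative the modulation sum contributes at least $\log N$, and the comparison with the right-hand side immediately gives a divergent ratio: this handles $s = -1/4$ for every $b \geq 1/2$, as well as any $(s,b)$ in the stated range with $b \geq -2s$. The hard part will be the subrange $-1/2 < s < -1/4$ at the endpoint $b = 1/2$, where $2b + 4s < 0$ and the single-scale Knapp cap only produces a ratio of order one. To overcome this I would either shift $u$ off the paraboloid to a tuned modulation scale $L = L(N)$, chosen so that the loss $L_\mathrm{out}^{2(b-1)}$ on the left pairs unfavorably against the gain $L^{2b+1}$ on the right, or else superpose a family of Knapp caps across dyadic frequency scales in the spirit of \cite{NTT01}, Theorem~1(iii), with weights arranged so that the diagonal contributions to $u\overline{u}$ assemble coherently while the right-hand side only adds in $\ell^2$. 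The main technical point in either approach is verifying that the angular Littlewood--Paley decomposition of the output, especially near the spatial origin where $\langle \Omega \rangle^\sigma$ is most delicate, is genuinely concentrated at $M_\mathrm{out} \sim N/L_\mathrm{out}$ as asserted above.
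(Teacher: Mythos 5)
The Knapp cap you propose is essentially the test function the actual proof uses (the paper takes $D_N = \{N \leq |\xi| \leq N+1,\ |\theta| \leq N^{-1},\ |\tau - |\xi|^2| \leq 1\}$, which up to constants is your $A_N$), and your right-hand side count $\|u\|^2_{X^{s,-2s-\frac12;b}} \sim N^{-2s-1}$ is correct. The gap is in the output. You assert that at output modulation $\sim L_{\mathrm{out}}$, the angular Littlewood--Paley content concentrates at $M_{\mathrm{out}} \sim N/L_{\mathrm{out}}$ because the $\xi$-support lies in a sector of angular width $\sim L_{\mathrm{out}}/N$. This conflates the angular extent of the $\tau$-unioned support with the angular Fourier scale. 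For each \emph{fixed} $\tau$, the slice $F_\tau(\xi) := \F_{t,x}(u\overline u)(\tau,\xi)$ is supported on the strip $\{|\tau - 2N\xi^{(1)}| \lesssim 1\}$, which at $|\xi| \sim 1$ is an arc of width $\sim N^{-1}$ independent of $L_{\mathrm{out}}$. Hence $|(F_\tau)_\ell| \sim N^{-1}$ throughout $|\ell| \lesssim N$ and $\|H_M F_\tau\|^2_{L^2_\theta} \sim M N^{-2}$ for every $M \lesssim N$. Since $\sigma = -2s-\frac12 \geq 0$, the weight $M^{2\sigma}$ is nondecreasing, so the dominant angular block of the output is $M \sim N$ for every $L_{\mathrm{out}}$, not $M \sim N/L_{\mathrm{out}}$. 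Correcting this replaces your schematic $N^{-4s-2}\sum_{L}L^{2b+4s}$ by $N^{-4s-2}\sum_{1\leq L\leq N}L^{2b-1}$, which already diverges like $\log N$ at $b=\frac12$ for every $s \in (-\frac12, -\frac14]$.

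Consequently the subcase $s<-\frac14$, $b=\frac12$ that you flag as hard, together with the two workarounds you sketch (tuning the input modulation, or superposing caps across dyadic scales in the spirit of \cite{NTT01}), are unnecessary: the single Knapp cap suffices throughout the stated range. The actual proof carries out exactly the slice-by-slice angular computation above. Its key inequality is $\|\langle\Omega\rangle^{-2s-\frac12}F_\tau\|_{L^2_\xi(|\xi|<1)} \gtrsim N^{-1-2s}$ uniformly for $|\tau|\leq N/10$, obtained from $(F_\tau)_\ell \approx N^{-1}\cos(\ell\theta_{\tau,r})$ (the reflection symmetry $F_\tau(\xi^{(1)},\xi^{(2)})=F_\tau(\xi^{(1)},-\xi^{(2)})$ produces the cosine, and $|\tau|\leq N/10$, $\frac14\leq r\leq\frac12$ place $\theta_{\tau,r}\in[\frac{\pi}{4},\frac{3\pi}{4}]$) together with the elementary bound $|\cos(\ell\theta)|+|\cos((\ell+1)\theta)|\gtrsim 1$ there. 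This is exactly the ``main technical point'' you rightly anticipate --- controlling $\langle\Omega\rangle^\sigma$ near the spatial origin --- but it is resolved by this direct Fourier-coefficient computation rather than by any modulation tuning or multi-scale construction.
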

We will give the proof of Proposition~\ref{proposition2.2} in Section~\ref{Section5}. 
\begin{rem}
Proposition~\ref{proposition2.2} suggests that we cannot show the well-posedness of \eqref{NLS} in $H^{s, -2s -\frac12}$, which is the critical case in Theorem \ref{thm1}, by just using the $X^{s,- 2 s -1/2;b}$ space. 
Thus, to prove Theorem \ref{thm1}, we follow the approach by Kishimoto and Tsugawa \cite{KT10} and employ the modified Fourier restriction norm $Z^{s,\sigma}_{\kappa}$. 
While, in the case $\sigma > -2 s - 1/2$, the modified space $Z^{s,\sigma}_{\kappa}$ is not required and the standard $X^{s,\sigma;b}$ space yields Theorem \ref{thm1}. That is, if $-1/2< s \leq -1/4$ and $\sigma >-2 s -1/2$ then there exists $b > 1/2$ such that the following holds:
\[
\|u \overline{v}\|_{X^{s,\sigma;b-1}} \lesssim \|u\|_{X^{s,\sigma;b}}\|v\|_{X^{s,\sigma;b}}.
\]
\end{rem}
\subsection{A Key bilinear estimate and proof of Theorem~\ref{thm1}}\label{subsection2.2}
We introduce a key bilinear estimate which immediately establishes Theorem \ref{thm1} by the iteration argument. 
\begin{prop}\label{prop2.2}
Let $-1/2< s \leq -1/4$, and $\sigma \geq -2s -1/2$. Then there exists small $\kappa=\kappa(s)>0$ such that
\[
\|\F_{t,x}^{-1} \bigl( \langle \tau - |\xi|^2 \rangle^{-1} \F_{t,x}({u}_1{\overline{u_2}})\bigr)\|_{Z^{s,\sigma}_{\kappa}} \lesssim 
\|u_1\|_{Z^{s,\sigma}_{\kappa}} \|u_2\|_{Z^{s,\sigma}_{\kappa}}.
\]
\end{prop}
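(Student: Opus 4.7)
I would prove the bilinear bound by dyadic decomposition together with trilinear duality, exploiting the modified Fourier restriction structure of $Z^{s,\sigma}_\kappa$ and the convolution estimate on thickened hypersurfaces introduced in Section~3. Decompose $u_1$, $u_2$, and the output $F:=\F_{t,x}^{-1}(\langle \tau-|\xi|^2\rangle^{-1}\F_{t,x}(u_1\overline{u_2}))$ into $Q_{L_j}H_{M_j}P_{N_j}$ blocks. By the characterization \eqref{est:Znorm}, bounding $F$ in $Z^{s,\sigma}_\kappa$ reduces to controlling each block separately in the $X^{s,\sigma;1/2+\kappa}$ norm when $L_0\ll N_0^2$ and in $X^{s+4\kappa,\sigma;1/2-\kappa}\cap Y^{s,\sigma}$ when $L_0\gtrsim N_0^2$. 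Absorbing the resolvent weight against the modulation factor and dualizing the output by Plancherel, the task becomes a family of trilinear convolution inequalities for
\begin{equation*}
\Bigl|\int (Q_{L_0}H_{M_0}P_{N_0}w)\,\overline{(Q_{L_1}H_{M_1}P_{N_1}u_1)}\,(Q_{L_2}H_{M_2}P_{N_2}u_2)\,dt\,dx\Bigr|,
\end{equation*}
with an explicit dyadic weight $C(N_j,M_j,L_j)$ that must sum in the appropriate Hilbert structures.

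\textbf{Case analysis.} The resonance identity $\sigma_0-\sigma_1+\sigma_2=2\xi_0\cdot\xi_2$ for the $u\overline{u}$ interaction, combined with $\xi_0=\xi_1-\xi_2$, governs when some modulation must be large. In the \textbf{high-low} regime $N_0\sim N_1\gg N_2$ (or its symmetric variant), one has $\max L_j\gtrsim N_0 N_2$ away from a thin near-orthogonal angular set, and a combination of bilinear $L^4_{t,x}$ Strichartz bounds with the modulation gain closes the estimate without appealing to angular regularity. The genuinely hard \textbf{high-high} regime $N_1\sim N_2\gtrsim N_0$ admits $|\xi_0\cdot\xi_2|$ as small as $N_0^2$, so the modulation gain alone is insufficient. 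Here I would further localize the two inputs onto angular caps of size commensurate with $M_j^{-1}$ and invoke Theorem~\ref{theorem:trilinearest}: the transversality between the thickened paraboloid pieces on narrow angular sectors produces a trilinear convolution gain of a positive power of $\min(L_0,L_1,L_2)$ together with a factor reflecting the angular sector size, which combined with $\sigma\geq -2s-1/2$ and the $N_j^s$ exactly cancels the regularity loss in the range $s\in(-1/2,-1/4]$.

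\textbf{Summation and the $Y^{s,\sigma}$ term.} The $L_j$-sums close thanks to a strict positive power of $\min(L_0,L_1,L_2)$ from the trilinear estimate and the $L_0^{-1}$ supplied by the resolvent; the $M_j$-sums close because angular support constraints force $M_0\lesssim\max(M_1,M_2)$ up to an $N_0$-dependent correction, together with $\sigma\geq 0$; and the $N_j$-sums close either by a clean $N_{\max}^{0-}$ gain in the high-low regime or, in the high-high regime where no bare spatial gain is available, by the $4\kappa$ exponent shift present in the $X^{s+4\kappa,\sigma;1/2-\kappa}$ component of the output norm, provided $\kappa>0$ is chosen small enough depending on $s$. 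The $Y^{s,\sigma}$-piece for $L_0\gtrsim N_0^2$ follows from the corresponding $X$-type bound by an additional Cauchy--Schwarz in $\tau$, with the logarithmic loss absorbed by the $\kappa$-gain.

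\textbf{Main obstacle.} The critical difficulty is the high-high-to-low interaction at the endpoint $\sigma=-2s-1/2$: both the spatial and angular weights are sharp, and Proposition~\ref{proposition2.2} rules out any approach based purely on the plain $X^{s,\sigma;b}$ space. The proof must genuinely exploit the geometric transversality encoded in Theorem~\ref{theorem:trilinearest}, and then carefully balance the small $\kappa$-loss coming from the $X^{s+4\kappa,\sigma;1/2-\kappa}\cap Y^{s,\sigma}$ correction against the resonant loss. Verifying that Assumption~\ref{assumption:hypersurface} holds uniformly in the dyadic parameters in this regime, and obtaining the correct ordering of the gains in powers of $N_j$, $M_j$, and $L_j$, will be the technical heart of the argument.
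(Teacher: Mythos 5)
Your proposal follows essentially the same architecture as the paper's argument: dyadic decomposition with trilinear duality, a split between the ``output at high frequency'' regime (handled by Strichartz together with a transversal Loomis--Whitney estimate, without angular regularity) and the ``high-high to low'' regime where the $Z^{s,\sigma}_\kappa$ correction and the angular convolution estimate of Theorem~\ref{theorem:trilinearest} are genuinely needed, with $\kappa>0$ supplying the room to sum. This is the same skeleton as the paper's proof through Propositions~\ref{prop4.4} and \ref{prop4.5} and the case analysis in the proof of Proposition~\ref{prop2.2}.

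Two of your mechanism-level claims, however, do not match what actually closes the estimate. First, the gain Theorem~\ref{theorem:trilinearest} provides is in the angular frequency $M_{\min}$ and the spatial frequencies $N_j$, namely $M_{\min}^{1/2-1/p}N_{\max}^{-1+1/p}N_{\min}^{1/p}$; it does not produce ``a positive power of $\min(L_0,L_1,L_2)$.'' The modulation factors enter neutrally as $(L_1L_2L_3)^{1/2}$ through the thickened-surface widths, and the modulation sums close via the exponents $\tfrac12\pm\kappa$ in the $Z^{s,\sigma}_\kappa$ norm and the extra $\langle\tau-|\xi|^2\rangle^{-1}$, not from any modulation gain in the convolution estimate. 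Relatedly, the angular localization underlying Theorem~\ref{theorem:trilinearest} is governed by the transversality scale $A^{-1}$, while the $M_{\min}$-dependence enters separately via Bernstein on the angular Fourier series; the inputs are not localized to $M_j^{-1}$-caps. Second, the $Y^{s,\sigma}$-component for $L_3\gtrsim N_3^2$ does not follow from the $X$-type bound by Cauchy--Schwarz alone: that only gives a factor $L_3^{1}$, which does not sum. The paper needs the additional pointwise measure estimate for $E(\tau,\xi)$ (estimate \eqref{est:prop4.1-12}), yielding a bound with $L_3^{1/2}$, and then interpolates to obtain the required $L_3^{1-\delta'}$; this extra ingredient is essential and your outline as written would not close that step.
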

Lastly in this section, we see that Proposition~\ref{prop2.2} yields Theorem~\ref{thm1}. Since the proof is standard, we give only a rough sketch of the proof. For the complete proof of Theorem \ref{thm1}, we refer to \cite{GTV97}, \cite{BT06}, \cite{Kishi09}, \cite{KT10}. 

Since \eqref{NLS} with $d=2$ is scaling invariant in $\dot{H}^{-\frac12}(\R^2)$, for the proof of Theorem~\ref{thm1}, we may assume that $\|u_0\|_{H^{s,\sigma}}$ is sufficiently small. 

We define
\begin{equation}\label{definition:N}
\mathcal{N}(u_1,u_2)(t) = - i \eta \int_0^t e^{-i(t-t') \Delta} \bigl( u_1(t') \overline{u_2}(t')\bigr) d t'.
\end{equation}
Let $I=[0,1]$. 
Our goal is to prove that the following operator
\begin{align*}
\mathcal{K}[u](t)&=e^{-it \Delta} u_0 - i \eta \int_0^t e^{-i(t-t') \Delta} |u(t')|^2 d t'\\
& =e^{-it \Delta} u_0 + \mathcal{N}(u,u)(t)
\end{align*}
is a contraction mapping in $Z^{s,\sigma}_{\kappa}(I)$. 
First, let us introduce the linear estimate. 
\begin{lem}\label{lemma2.4}
Let $s \in \R$, $\sigma \geq 0$, $0<\kappa<1$, and $I=[0,1]$. Then, we have
\[
\|e^{-it \Delta} u_0\|_{Z^{s,\sigma}_{\kappa}(I)}  \lesssim \|u_0\|_{H^{s,\sigma}}.
\]
\end{lem}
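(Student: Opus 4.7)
The plan is to reduce the time-restricted bound to a global one by extending $e^{-it\Delta}u_0$ via a smooth cutoff, and then to place the entire extension into the $X^{s,\sigma;\frac12+\kappa}$ summand of $Z^{s,\sigma}_\kappa$. Concretely, fix $\chi \in C_c^\infty(\R)$ with $\chi \equiv 1$ on $I = [0,1]$ and set
\[
v(t,x) := \chi(t)\, e^{-it\Delta} u_0(x).
\]
Since $v(t,x) = (e^{-it\Delta}u_0)(x)$ for all $t \in I$, the definition of the restricted norm gives $\|e^{-it\Delta}u_0\|_{Z^{s,\sigma}_\kappa(I)} \leq \|v\|_{Z^{s,\sigma}_\kappa}$. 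Taking $u_2 = 0$ in the infimum defining the sum-norm reduces matters to proving
\[
\|v\|_{X^{s,\sigma;\frac12+\kappa}} \lesssim \|u_0\|_{H^{s,\sigma}}.
\]

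To carry this out I would compute directly: a free solution has Fourier transform supported on $\tau = |\xi|^2$, so multiplying by $\chi(t)$ and taking the space-time Fourier transform yields
\[
\F_{t,x}v(\tau,\xi) = \widehat{\chi}(\tau - |\xi|^2)\,\widehat{u_0}(\xi).
\]
The angular projection $H_M$ commutes with $\F_x$ (as recorded in Section~2) and acts trivially in $t$, while $P_N$ and $Q_L$ are Fourier multipliers in $\xi$ and $\tau-|\xi|^2$ respectively. By Fubini and the change of variable $\tau \mapsto \tau - |\xi|^2$, the three projections decouple:
\[
\|Q_L H_M P_N v\|_{L^2_{t,x}}^2 \;=\; \|\psi_L\, \widehat{\chi}\|_{L^2_\tau}^2 \cdot \|H_M P_N u_0\|_{L^2_x}^2.
\]
Summing against the weight $L^{1+2\kappa}$ produces a finite constant because $\widehat{\chi}$ is Schwartz and hence $\sum_L L^{1+2\kappa}\|\psi_L \widehat{\chi}\|_{L^2_\tau}^2 < \infty$; summing the remaining weights $N^{2s}M^{2\sigma}$ reproduces $\|u_0\|_{H^{s,\sigma}}^2$.

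There is really no obstacle to speak of: this is the standard free-solution estimate for a Fourier restriction norm space, and the $Z^{s,\sigma}_\kappa$ side comes for free once one chooses the $X$-summand. The only minor items to verify are that $H_M$ commutes with the spatial Fourier transform (already noted in Section~2) and that the rapid decay of $\widehat{\chi}$ absorbs the weight $L^{1+2\kappa}$, which follows from Schwartz decay. Since the estimate is uniform in $u_0$ and the constant depends only on $\chi$ (hence on $I$) and $\kappa$, this completes the plan.
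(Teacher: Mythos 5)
Your proposal is correct and follows exactly the same route the paper indicates: extend the free solution by the cutoff $\chi$, drop to the $X^{s,\sigma;b}$ summand of $Z^{s,\sigma}_\kappa$ by taking $u_2=0$ in the infimum, and exploit the factorization $\F_{t,x}(\chi e^{-it\Delta}u_0)(\tau,\xi) = \widehat{\chi}(\tau-|\xi|^2)\widehat{u_0}(\xi)$ together with the rapid decay of $\widehat{\chi}$ and the radial/angular decoupling of $Q_L$, $P_N$, $H_M$. This is precisely the standard linear estimate the paper cites from \cite{GTV97} and \cite{KT10}, so no further comment is needed.
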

The proof of Lemma~\ref{lemma2.4} is simple since $\|e^{-it \Delta} u_0\|_{Z^{s,\sigma}_{\kappa}(I)}  \lesssim \|\chi(t) e^{-it \Delta} u_0\|_{X^{s,\sigma;1}}$ where $\chi$ is defined at the beginning of Section~2. 
Thus we omit the proof. For the details, see e.g. \cite{GTV97}, \cite[Proposition~2.6]{KT10}.

Next, we introduce the estimate to handle the Duhamel term $\mathcal{N}(u,u)$. 
We omit the proof. 
See \cite[Lemma~2.1]{GTV97} and \cite[Proposition~2.7]{KT10}.
\begin{lem}\label{lemma2.5}
Let $s \in \R$, $\sigma \geq 0$, $0<\kappa<1$, and $v_1$, $v_2 \in Z^{s,\sigma}_{\kappa}$. Then, we have
\[
\|\chi (t) \mathcal{N}(v_1,v_2)\|_{Z^{s,\sigma}_{\kappa}}  \lesssim \|\F_{t,x}^{-1} \bigl( \langle \tau - |\xi|^2 \rangle^{-1} \F_{t,x}(v_1 \overline{v_2})\bigr)\|_{Z^{s,\sigma}_{\kappa}}.
\]
\end{lem}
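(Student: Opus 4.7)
The plan is to unfold $\chi(t)\mathcal{N}(v_1,v_2)$ in space--time Fourier, split it into a ``forced'' piece resembling $\chi(t)\F_{t,x}^{-1}((\tau-|\xi|^2)^{-1}\F_{t,x}G)$ and a ``free-evolution'' piece $\chi(t)e^{-it\Delta}h$, and control each piece separately against the $Z^{s,\sigma}_\kappa$-norm of
\[
F:=\F_{t,x}^{-1}\bigl(\LR{\tau-|\xi|^2}^{-1}\F_{t,x}G\bigr),\qquad G:=v_1\overline{v_2}.
\]

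First, by Duhamel and the identity $\int_0^t e^{is(\tau-|\xi|^2)}\,ds=(e^{it(\tau-|\xi|^2)}-1)/(i(\tau-|\xi|^2))$, one obtains
\[
\chi(t)\mathcal{N}(v_1,v_2)(t,x)=-\frac{\eta}{2\pi}\,\chi(t)\,\F_x^{-1}\!\int\frac{e^{it\tau}-e^{it|\xi|^2}}{\tau-|\xi|^2}\,\F_{t,x}G(\tau,\xi)\,d\tau.
\]
Next I would decompose $G=\sum_L Q_LG$ dyadically in modulation. For $L\geq 2$ the denominator $(\tau-|\xi|^2)^{-1}$ is comparable to $\LR{\tau-|\xi|^2}^{-1}$, and the two exponentials separate cleanly: the $e^{it\tau}$ term reproduces $\chi(t)$ times the $L$-shell of $F$, while the $e^{it|\xi|^2}$ term yields a free evolution $\chi(t)e^{-it\Delta}h_L$ with $|\widehat{h_L}(\xi)|\lesssim L^{-1}\|\psi_L(\cdot-|\xi|^2)\F_{t,x}(Q_LG)(\cdot,\xi)\|_{L^1_\tau}$. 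For $L=1$, on the support of $\chi(t)\psi_1(\tau-|\xi|^2)$ the absolutely convergent expansion
\[
\frac{e^{it\tau}-e^{it|\xi|^2}}{\tau-|\xi|^2}=e^{it|\xi|^2}\sum_{n\geq 1}\frac{(it)^n(\tau-|\xi|^2)^{n-1}}{n!}
\]
produces only free-wave contributions $\chi(t)t^n e^{-it\Delta}h_n$ summable in $n$.

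Then, for any $\varepsilon>0$, I would fix a near-optimal splitting $F=F_1+F_2$ with $F_1\in X^{s,\sigma;1/2+\kappa}$ and $F_2\in X^{s+4\kappa,\sigma;1/2-\kappa}\cap Y^{s,\sigma}$ realising $\|F\|_{Z^{s,\sigma}_\kappa}$ up to $\varepsilon$, and distribute the forced and free pieces above according to this split. For the forced terms, multiplication by $\chi(t)$ equals convolution in $\tau$ with the Schwartz function $\ha{\chi}$, hence is bounded on every $X^{s,\sigma;b}$; this directly supplies both the $X^{s,\sigma;1/2+\kappa}$ and the $X^{s+4\kappa,\sigma;1/2-\kappa}$ contributions to the $Z^{s,\sigma}_\kappa$ norm. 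For the free-evolution terms $\chi(t)e^{-it\Delta}h$, Lemma~\ref{lemma2.4} reduces matters to bounding $\|h\|_{H^{s,\sigma}}$: the piece of $h$ built from $F_1$ is handled by Cauchy--Schwarz in $\tau$ against the $1/2+\kappa$ weight (with $\kappa>0$ giving convergence), while the piece from $F_2$ is controlled by its $Y^{s,\sigma}$ norm, since the $\tau$-integral producing $h$ on each dyadic frequency block is precisely an $L^1_\tau$ norm of $\F_{t,x}F_2$.

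The main obstacle is this last matching. A bare $X^{s,\sigma;1/2-\kappa}\to H^{s,\sigma}$ bound would fail because Cauchy--Schwarz in $\tau$ against weight $1/2-\kappa$ diverges; it is exactly for this reason that the $Y^{s,\sigma}$ summand is built into $Z^{s,\sigma}_\kappa$ in Definition~\ref{definition2.15}. The delicate bookkeeping consists in correctly transferring the $X$/$Y$ splitting of $F$ to a splitting of $\chi(t)\mathcal{N}(v_1,v_2)$ that respects both components simultaneously; once this is done, the dyadic-in-$L$ sum converges geometrically via the $L^{-1}$ factor, and the remainder of the argument proceeds along the lines of \cite{GTV97,KT10}.
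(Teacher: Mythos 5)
Your proposal is correct and reconstructs, in detail, the standard argument that the paper defers to via the references to \cite{GTV97} (Lemma~2.1) and \cite{KT10} (Proposition~2.7): unfold Duhamel as $\F_x^{-1}\int\frac{e^{it\tau}-e^{it|\xi|^2}}{\tau-|\xi|^2}\F_{t,x}G\,d\tau$, Taylor-expand the $L=1$ modulation piece, split the $L\geq 2$ pieces into the forced term $\chi(t)Q_LF$ and the free term $\chi(t)e^{-it\Delta}h_L$, and distribute these against the near-optimal splitting $F=F_1+F_2$ of the $Z^{s,\sigma}_\kappa$ norm, with $Y^{s,\sigma}$ absorbing the $\tau$-integral from the $F_2$-piece of $h$. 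The only point you elide is that you also need $\chi(\cdot)$ to act boundedly on $Y^{s,\sigma}$ for the forced $F_2$ part (so that $\chi(1-Q_1)F_2$ belongs to the intersection $X^{s+4\kappa,\sigma;\frac12-\kappa}\cap Y^{s,\sigma}$, not just the $X$-factor); this is immediate from $\|\widehat{\chi}*_\tau\F_{t,x}u(\cdot,\xi)\|_{L^1_\tau}\leq\|\widehat{\chi}\|_{L^1}\|\F_{t,x}u(\cdot,\xi)\|_{L^1_\tau}$, but it should be stated.
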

Let $I=[0,1]$. Lemma~\ref{lemma2.5} and Proposition~\ref{prop2.2} imply that if $-1/2< s \leq -1/4$ and $\sigma \geq -2s -1/2$ then there exists $\kappa=\kappa(s)>0$ such that
\begin{equation}\label{est:NonlinearEst}
\|\mathcal{N}(u,u)\|_{Z^{s,\sigma}_{\kappa}(I)}  \lesssim \|u\|_{Z^{s,\sigma}_{\kappa}(I)}^2.
\end{equation}
To see this, we choose $v \in Z^{s,\sigma}_{\kappa}$ which satisfies
\[
u(t) = v(t) \quad \mathrm{if} \ t \in I, \qquad \|v \|_{Z^{s,\sigma}_{\kappa}} \leq 2 \|u\|_{Z^{s,\sigma}_{\kappa}(I)}.
\]
It is known that if $v \in Z^{s,\sigma}_{\kappa}$ then $\mathcal{N}(v,v) \in C(\R; H^{s,\sigma}(\R^2))$. 
For the proof, we refer to Lemma~2.2 in \cite{GTV97}. 
Then, since $\mathcal{N}(v,v)(t) = \mathcal{N}(u,u)(t)$ if $t \in I$, it holds that $\|\mathcal{N}(u,u)\|_{Z^{s,\sigma}_{\kappa}(I)} \leq \|\chi(t) \mathcal{N}(v,v)\|_{Z^{s,\sigma}_{\kappa}}$. 
Hence, it follows from Lemma~\ref{lemma2.5} and Proposition~\ref{prop2.2} that
\begin{align*}
\|\mathcal{N}(u,u)\|_{Z^{s,\sigma}_{\kappa}(I)} & \leq \|\chi(t) \mathcal{N}(v,v)\|_{Z^{s,\sigma}_{\kappa}}\\
&\lesssim \|\F_{t,x}^{-1} \bigl( \langle \tau - |\xi|^2 \rangle^{-1} \F_{t,x}(|v|^2)\bigr)\|_{Z^{s,\sigma}_{\kappa}}\\
& \lesssim  \|v \|_{Z^{s,\sigma}_{\kappa}}^2 \lesssim \|u\|_{Z^{s,\sigma}_{\kappa}(I)}^2,
\end{align*}
which completes the proof of \eqref{est:NonlinearEst}.

Since $\|u_0\|_{H^{s,\sigma}}$ is small, by the standard argument, we may see that the mapping $\mathcal{K}$ is contraction in $Z^{s,\sigma}_{\kappa}(I)$ and then the existence of the solution is verified. In addition, the uniqueness and the continuous dependence on initial data can be shown in a standard way. 

\section{Key convolution estimates}
In this section, we introduce convolution estimates which play a central role in the proof of Proposition \ref{prop2.2}. 
The fist estimate is the convolution estimate on the hypersurfaces which can be seen as a nonlinear version of the Loomis-Whitney inequality. First, we define $C^{1,\beta}$-hypersurfaces given as graphs of $C^{1,\beta}$-functions following \cite[Assumption~1.1]{BH11} and \cite{KS21}.
\begin{defn}\label{Definition:Hypersurfaces}
For $0 < \beta \leq 1$, we say that $S \subset \R^3$ is a $C^{1,\beta}$ hypersurface if there exist an open and convex set $\Omega \subset \R^2$ and a $C^{1,\beta}$ function $\varphi : \Omega \to \R$ such that
\[
S = \{ (\varphi(\xi), \xi) \in \R^3 \; | \; \xi \in \Omega \}.
\]
\end{defn}
\begin{rem}
Let $S$ be a $C^{1,\beta}$ hypersurface and $\mathfrak{n}$ denote the unit normal vector field on $S$. 
Then, $\mathfrak{n}$ satisfies the H\"older condition
\[
\sup_{\sigma, \tilde{\sigma} \in S} \biggl( \frac{|\mathfrak{n}(\sigma) - \mathfrak{n}(\tilde{\sigma})|}{|\sigma - \tilde{\sigma}|^\beta} + \frac{|\mathfrak{n}(\sigma) \cdot (\sigma - \tilde{\sigma})|}{|\sigma - \tilde{\sigma}|^{1+\beta
}}\biggr) < \infty.
\]
\end{rem}
\begin{assumption}\label{AssumptionSurfaces}
Let $S_1$, $S_2$, $S_3$ be $C^{1,\beta}$ hypersurfaces. For $i=1,2,3$, let $\mathfrak{n}_i$ denotes the unit normal vector field on $S_i$. 
There exists $A \geq 1$ such that the matrix $N(\sigma_1,\sigma_2,\sigma_3) = (\mathfrak{n}_1(\sigma_1),\mathfrak{n}_2(\sigma_2),\mathfrak{n}_3(\sigma_3))$ satisfies the transversality condition
\begin{equation}
\label{eq:TransversalityAssumption}
A^{-1} \leq |\det N(\sigma_1,\sigma_2,\sigma_3)| \leq 1
\end{equation}
for all $(\sigma_1,\sigma_2,\sigma_3) \in S_1 \times S_2 \times S_3$.
\end{assumption}
If hypersurfaces $S_1$, $S_2$, $S_3$ satisfy Assumption~\ref{AssumptionSurfaces}, the following convolution estimate of functions restricted to such hypersurfaces holds. 
\begin{thm}
\label{thm:GNLWR3}
Assume that $S_1$, $S_2$, $S_3$ satisfy Assumption~\ref{AssumptionSurfaces}. Then, for each $f \in L^2(S_1)$ and $g \in L^2(S_2)$, we have
\begin{equation}\label{est:GNLWR3}
\Vert f * g \Vert_{L^2(S_3)} \lesssim A^{\frac12} \Vert f \Vert_{L^2(S_1)} \Vert g \Vert_{L^2(S_2)},
\end{equation}
where the implicit constant is independent of $\beta$.
\end{thm}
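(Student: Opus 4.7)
The estimate \eqref{est:GNLWR3} is a nonlinear Loomis--Whitney inequality for three transversal hypersurfaces in $\R^3$, and my plan is to reduce it to the classical (flat) Loomis--Whitney inequality by a parametric change of variables whose Jacobian is controlled by the transversality hypothesis.

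First, by duality I would recast \eqref{est:GNLWR3} as the symmetric trilinear bound
\[
\Bigl|\int_M f(\sigma_1) g(\sigma_2) h(\sigma_3)\, d\mathcal{H}^3 \Bigr| \lesssim A^{1/2} \|f\|_{L^2(S_1)} \|g\|_{L^2(S_2)} \|h\|_{L^2(S_3)},
\]
where $M \subset S_1 \times S_2 \times S_3$ is the $3$-dimensional incidence manifold $\{(\sigma_1,\sigma_2,\sigma_3) : \sigma_1 + \sigma_2 = \sigma_3\}$, equipped with its natural surface measure. Next I would parametrize each $S_i$ by its graph $\sigma_i = (\varphi_i(\xi_i), \xi_i)$ with area element $J_i\, d\xi_i$, $J_i = \sqrt{1+|\nabla\varphi_i|^2}$, so that the constraint $\sigma_1 + \sigma_2 = \sigma_3$ becomes $\xi_3 = \xi_1 + \xi_2$ together with $\Psi(\xi_1,\xi_2) := \varphi_1(\xi_1) + \varphi_2(\xi_2) - \varphi_3(\xi_1 + \xi_2) = 0$. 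Since $\mathfrak{n}_i = J_i^{-1}(1,-\nabla\varphi_i)$, a short expansion of the $3\times 3$ determinant (subtracting the first column from the other two and expanding along the top row) gives
\[
|\det N(\sigma_1,\sigma_2,\sigma_3)| = \frac{|\det(\nabla\varphi_1(\xi_1) - \nabla\varphi_3(\xi_3),\; \nabla\varphi_2(\xi_2) - \nabla\varphi_3(\xi_3))|}{J_1(\xi_1) J_2(\xi_2) J_3(\xi_3)},
\]
so the transversality hypothesis \eqref{eq:TransversalityAssumption} translates into a two-sided bound on the $2\times 2$ determinant on the right, with the lower bound of order $A^{-1} J_1 J_2 J_3$. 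The coarea formula then rewrites the trilinear form as a weighted integral over the $3$-dimensional hypersurface $\{\Psi = 0\} \subset \Omega_1 \times \Omega_2$.

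On this hypersurface, the integrand carries exactly the Loomis--Whitney structure: each of $f,g,h$ depends on only two of the three natural parameters $(\xi_1, \xi_2, \xi_3 = \xi_1 + \xi_2)$, with the remaining coupling encoded by the single equation $\Psi = 0$. I would then choose a chart by solving $\Psi = 0$ for whichever coordinate of $\xi_1$ or $\xi_2$ has the largest partial derivative in modulus (necessarily comparable to one of the $|\nabla\varphi_i - \nabla\varphi_j|$ by the transversality, after a partition of unity on $\Omega_1 \times \Omega_2$ of bounded cardinality). In such a chart the desired estimate reduces to the classical flat Loomis--Whitney inequality, which is proved by two successive applications of Cauchy--Schwarz in the style of the standard argument on the three coordinate planes. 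The $2\times 2$-determinant lower bound then contributes the inverse Jacobian $|\det N|^{-1} \lesssim A$, which, distributed evenly between the two Cauchy--Schwarz factors, produces precisely the sharp exponent $A^{1/2}$.

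The main obstacle will be keeping the constant uniform in the H\"older exponent $\beta$, since the $C^{1,\beta}$ hypothesis supplies no quantitative curvature control and we cannot afford any loss of the form $\beta^{-1}$. This forces the entire argument to rely only on the first-order data $(\varphi_i, \nabla\varphi_i)$ and the transversality bound on $|\det N|$, both of which are continuous (indeed H\"older continuous) in $\beta$. The coarea-plus-Cauchy--Schwarz scheme is well-adapted to this constraint because it uses no second derivatives of $\varphi_i$. Likewise, the cardinality of the dominant-direction partition of unity in the previous step is controlled purely by the transversality constant $A$ and is independent of $\beta$, so no $\beta$-dependent constants enter at any stage of the estimate.
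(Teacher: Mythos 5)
This theorem is not proved in the paper; it is quoted directly from Kinoshita--Schippa (Theorem~4.1 in~\cite{KS21}), with historical pointers to \cite{BCW05, BHHT09, BHT10, BH11, KS05}. Your proposal attempts an original proof, but it has a genuine gap at the central step.

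The difficulty is that after you solve $\Psi(\xi_1,\xi_2)=\varphi_1(\xi_1)+\varphi_2(\xi_2)-\varphi_3(\xi_1+\xi_2)=0$ for one component --- say $\xi_1^{(1)}=\psi(\xi_1^{(2)},\xi_2^{(1)},\xi_2^{(2)})$ --- the remaining integrand is
\[
F\bigl(\psi(\cdot),\xi_1^{(2)}\bigr)\, G\bigl(\xi_2^{(1)},\xi_2^{(2)}\bigr)\, H\bigl(\psi(\cdot)+\xi_2^{(1)},\, \xi_1^{(2)}+\xi_2^{(2)}\bigr),
\]
and since $\psi$ depends on all three surviving coordinates, both $F$ and $H$ depend on all three coordinates. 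The flat Loomis--Whitney inequality applies only when each of the three factors depends on exactly two of the three variables; that rigid combinatorial structure is lost the moment one component of $\xi_1$ becomes an implicit function of the rest. This is precisely why the nonlinear Loomis--Whitney estimate is a nontrivial theorem: already in the concrete case $\varphi_1=\varphi_2=\varphi_3=|\cdot|^2$, the constraint $\Psi=0$ reads $\xi_1\perp\xi_2$, and no chart of the incidence surface makes $\xi_1$, $\xi_2$, $\xi_1+\xi_2$ each depend on only two of three parameters. The known proofs --- Bennett--Carbery--Wright's induction on scales for $C^3$ surfaces, the $C^{1,\beta}$ refinements of Bejenaru--Herr--Holmer--Tataru and Bejenaru--Herr--Tataru, and the global versions of Koch--Steinerberger and Kinoshita--Schippa --- all replace your one-shot chart by a more delicate argument (an induction on scales or a carefully adapted foliation/change of variables), and that extra work is what controls the constant uniformly in $\beta$.

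Two further remarks. First, the preparatory steps in your plan are correct and standard: the duality reduction to the trilinear form, the graph parametrizations, and the determinant identity
\[
|\det N(\sigma_1,\sigma_2,\sigma_3)| = \frac{|\det(\nabla\varphi_3(\xi_3)-\nabla\varphi_1(\xi_1),\, \nabla\varphi_3(\xi_3)-\nabla\varphi_2(\xi_2))|}{J_1J_2J_3}
\]
are all right and match the setup in the cited literature. Second, even if the Loomis--Whitney structure were preserved, the application of the implicit function theorem on a single chart (or a partition of bounded cardinality) for merely $C^{1,\beta}$ surfaces would require quantitative control on the modulus of continuity of $\nabla\Psi$ that the $\beta$-independence constraint forbids you to use; that, too, is addressed in the cited proofs by working at a single scale at a time rather than globally.
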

For the proof of Theorem~\ref{thm:GNLWR3}, we refer to Theorem 4.1 in \cite{KS21}. 
The convolution estimate \eqref{est:GNLWR3}, for functions restricted to $C^3$-regularity localized hypersurfaces, was introduced by Bennett, Carbery, and Wright in \cite{BCW05} as a perturbed version of the classical three dimensional Loomis-Whitney inequality \cite{LW}. After that, the regularity assumption on hypersurfaces was relaxed, and \eqref{est:GNLWR3} for $C^{1,\beta}$ ($\beta >0$) localized hypersurfaces was obtained in \cite{BHHT09, BHT10}. See also~\cite{BH11}. 
Lastly, the locality assumption was removed in \cite{KS05, KS21}.

The estimate \eqref{est:GNLWR3} has a connection with the trilinear restriction estimates (see e.g. \cite{BCW05, BCT06}) and has been applied to the study of the Cauchy problem of nonlinear dispersive equations. 
For instance, \eqref{est:GNLWR3} was utilized in \cite{BHHT09} to push down the necessary regularity threshold for the well-posedness of the Zakharov system in $\R^2$. 

For $\varepsilon>0$ and a $C^{1,\beta}$ hypersurface $S$ given by $\varphi : \Omega \to \R$, the thickened hypersurface $S(\varepsilon)$ is defined by
\begin{equation}\label{defn:ThickenedS}
S(\varepsilon) = \{ (\tau,\xi) \in \R^3 \, |\, \xi \in \Omega, \ |\tau-\varphi(\xi)| \leq \varepsilon \}.
\end{equation}
We note that in the proof of the key bilinear estimate (Proposition~\ref{prop2.2}), instead of Theorem~\ref{thm:GNLWR3}, we use the following convolution estimate on thickened hypersurfaces:
\begin{equation}\label{est:ThickenedLW}
\Vert f * g \Vert_{L^2(S_3(\varepsilon))} \lesssim \varepsilon^{\frac32} A^{\frac12} \Vert f \Vert_{L^2(S_1(\varepsilon))} \Vert g \Vert_{L^2(S_2(\varepsilon))},
\end{equation}
where $C^{1,\beta}$ hypersurfaces $S_1$, $S_2$, $S_3$ satisfy \textit{Assumption}~\ref{AssumptionSurfaces}. 
It is straightforward to see that Theorem~\ref{thm:GNLWR3}, Fubini's theorem, and the Cauchy-Schwarz inequality give \eqref{est:ThickenedLW}, and in fact, we may see that the two estimates \eqref{est:GNLWR3} and \eqref{est:ThickenedLW} are equivalent. For the details, we refer to Section 4.2. in \cite{KS21}. 

Next, we state the second key estimate which is similar in form to \eqref{est:ThickenedLW}. For $2 \leq p< \infty$, we define
\begin{align*}
& \|f\|_{L_{\tau,r}^2 L_{\theta}^{p}(S_i(\varepsilon))} = \Bigl( \int_{\R} \int_0^{\infty}  \Bigl(\int_0^{2 \pi}  \mathbf{1}_{S_i(\varepsilon)} |f(\tau, r \cos \theta, r \sin \theta)|^p d\theta\Bigr)^{\frac{2}{p}} r  dr d\tau \Bigr)^{\frac12},\\
& \|f\|_{L_{\tau,r}^2 L_{\theta}^{\infty}(S_i(\varepsilon))} = \Bigl( \int_{\R} \int_0^{\infty}  \sup_{\theta \in [0,2 \pi)} \mathbf{1}_{S_i(\varepsilon)} |f(\tau, r \cos \theta, r \sin \theta)|^2 r  dr d\tau \Bigr)^{\frac12}.
\end{align*}
Let $B_2 = \{ \xi \in \R^2 \, | \, |\xi| < 2\}$. The assumptions on hypersurfaces are the following:
\begin{assumption}\label{assumption:hypersurface}
Suppose that the $C^{1,\beta}$ hypersurfaces $S_1$, $S_2$, $S_3$ are given by $\varphi_1$, $\varphi_2$, $\varphi_3 \in C^{1,\beta}({B_2})$, respectively, which satisfy the following conditions:
\begin{enumerate}[label=(\roman*)]
\item \label{assumption1-Ass2} For $i=1,2,3$, $\varphi_i$ is radial,
\item \label{assumption2-Ass2} $|\nabla \varphi_i(\xi)|\sim 1 \quad \textnormal{if} \ \  |\xi| \sim 1$,
\item \label{assumption3-Ass2} If $\xi_1+\xi_2+\xi_3=0$, $\max_{i=1,2,3}|\xi_i| \sim 1$, and 
$|\varphi_1(\xi_1)+ \varphi_2(\xi_2) + \varphi_3(\xi_3)| \ll 1$, then it holds that 
\[
| \nabla \varphi_1(\xi_1)- \nabla \varphi_2 (\xi_2)|+|\nabla \varphi_2(\xi_2)- \nabla \varphi_3 (\xi_3)| \sim 1,
\]
\end{enumerate}
\end{assumption}

\begin{thm}\label{theorem:trilinearest}
For $i=1,2,3$, let $0< \varepsilon$, $N_i \in 2^{\Z}$, and a function $f_i \in L^2(S_i(\varepsilon))$ satisfies 
$\supp f_i \subset \{(\tau,\xi) \, | \, |\xi| \sim N_i\}$. Assume that $C^{1,\beta}$ hypersurfaces $S_1$, $S_2$, $S_3$ satisfy \textit{Assumption} \textnormal{\ref{assumption:hypersurface}} and $N_3 \lesssim N_1 \sim N_2 \sim 1$. 
Then, for $2 \leq p \leq \infty$, we have
\begin{equation}\label{est:ThickenedLWrad}
| (f_1 * f_2 * f_3)(0)| \lesssim C(\varepsilon,p, N_3)  \|f_1\|_{L_{\tau,r}^2 L_{\theta}^{p}(S_1(\varepsilon))} \|f_2\|_{L^2(S_2(\varepsilon))} \|f_3\|_{L^2(S_3(\varepsilon))},
\end{equation}
where 
\begin{equation}
\label{defn:ConstantThm3.4}
C(\varepsilon,p, N_3) =
\begin{cases}
\varepsilon^{\frac32 - \frac{1}{p}} N_{3}^{\frac{1}{p}} \ & (2 \leq p < \infty),\\
\langle \log \varepsilon \rangle \varepsilon^{\frac32} \ & (p= \infty).
\end{cases}
\end{equation}
\end{thm}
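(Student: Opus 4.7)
My strategy is to establish the two endpoint cases $p = 2$ and $p = \infty$ separately, and then interpolate to cover intermediate $p$.

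For the endpoint $p = \infty$, the target is $|T| \lesssim \langle \log \varepsilon \rangle \varepsilon^{3/2} \|f_1\|_{L^2_{\tau,r}L^\infty_\theta}\|f_2\|_{L^2}\|f_3\|_{L^2}$. Cauchy--Schwarz in $(\tau_1, r_1, \omega_1)$---exploiting that the $L^\infty_\theta$-norm permits free integration in $\omega_1$---gives
\[
|T| \le \|f_1\|_{L^2_{\tau,r}L^\infty_\theta}\,\|f_2 * f_3^-\|_{L^2(S_1(\varepsilon))}, \qquad f_3^-(\sigma) := f_3(-\sigma).
\]
The bilinear factor is handled by dyadic decomposition in $\sin\angle(\xi_2,\xi_3)$. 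Using Assumption~\ref{assumption:hypersurface}(i)--(iii) together with the radial structure, the transversality determinant satisfies $|\det N(\sigma_1,\sigma_2,\sigma_3)| \sim N_3 \cdot \sin\angle(\xi_2,\xi_3)$, degenerating to zero when $\xi_2 \parallel \xi_3$. On each dyadic scale $\sin\angle \sim 2^{-k}$ the local transversality is $\sim 2^{-k}N_3$ while the angular measure of the relevant piece is $\sim 2^{-k}$, so applying \eqref{est:ThickenedLW} on each piece contributes $\sim \varepsilon^{3/2}$ uniformly in $k$; summing over the $k \lesssim \log(1/\varepsilon)$ dyadic scales produces the logarithmic factor.

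For the endpoint $p = 2$, the target $|T| \lesssim \varepsilon N_3^{1/2} \|f_1\|_{L^2}\|f_2\|_{L^2}\|f_3\|_{L^2}$ is addressed using the rotational invariance from Assumption~\ref{assumption:hypersurface}(i). Expanding each $f_i$ as an angular Fourier series $f_i(\tau, r\cos\theta, r\sin\theta) = \sum_{\ell \in \Z} g_i^{(\ell)}(\tau, r) e^{i\ell\theta}$, only triples $(\ell_1, \ell_2, \ell_3)$ with $\ell_1 + \ell_2 + \ell_3 = 0$ contribute. Integrating out the two-dimensional angular delta function $\delta(\sum \xi_i)$ yields a Jacobian $\sim 1/J$, where $J$ is twice the area of the triangle with sides $r_1, r_2 \sim 1$ and $r_3 \sim N_3$; since this triangle is long and thin, $J \sim N_3$, which is absorbed by the polar measure factor $r_1 r_2 r_3 \sim N_3$. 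The remaining integral in $(r_1, r_2, r_3, \tau_1, \tau_2)$ is restricted by the triangle condition and the radial resonance $|\tilde\varphi_1(r_1) + \tilde\varphi_2(r_2) + \tilde\varphi_3(r_3)| \lesssim \varepsilon$, and is estimated by successive Cauchy--Schwarz/Young arguments, quantifying the thinness of the resonance via the radial gradients controlled by Assumption~\ref{assumption:hypersurface}(ii)--(iii). Parseval then reconstructs the $L^2$ norms from the angular modes.

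Riesz--Thorin interpolation on the sublinear map $f_1 \mapsto T$ (with $f_2, f_3$ fixed) between the two endpoint estimates gives the claimed bound for $p \in (2, \infty)$, with the logarithm at $p = \infty$ absorbed in the interpolation. The main technical obstacle is the careful tracking of the transversality through the decompositions: a direct application of \eqref{est:ThickenedLW} with the global transversality $A \sim 1/N_3$ is insufficient, and the sharp gain must come from either the dyadic angular decomposition (at $p = \infty$) or the Fourier-series and triangle-geometry analysis (at $p = 2$). Correctly balancing the shrinking angular measure against the worsening transversality at $p = \infty$, and the precise cancellation between the angular Jacobian and the polar measure at $p = 2$, is the delicate point.
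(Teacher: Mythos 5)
Your plan is genuinely different from the paper's, and unfortunately it does not close.

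\textbf{The interpolation step is fatal.} The paper proves \eqref{est:ThickenedLWrad} directly for every $2\le p<\infty$ with no logarithmic loss, and only at $p=\infty$ does a $\langle\log\varepsilon\rangle$ appear. Interpolating between your two endpoints $p=2$ (constant $\varepsilon N_3^{1/2}$) and $p=\infty$ (constant $\langle\log\varepsilon\rangle\varepsilon^{3/2}$) with $\theta=2/p$ produces the constant $\varepsilon^{3/2-1/p}N_3^{1/p}\langle\log\varepsilon\rangle^{1-2/p}$. The extra factor $\langle\log\varepsilon\rangle^{1-2/p}$ does not ``get absorbed'' --- Riesz--Thorin interpolates constants multiplicatively --- so for every $2<p<\infty$ you obtain a strictly weaker bound than the theorem claims. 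Proposition~\ref{prop:SharpnessFactor} shows the stated constant is optimal, so this logarithmic defect cannot be repaired by a cleverer interpolation; one must run a $p$-uniform argument as the paper does.

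\textbf{The $p=\infty$ endpoint sketch has gaps.} First, the displayed Cauchy--Schwarz step is not what you describe: pulling out $\|f_1\|_{L^2_{\tau,r}L^\infty_\theta}$ leaves the dual $L^2_{\tau,r}L^1_\theta$-norm of $f_2*f_3^-$, not its full $L^2$-norm. If you instead Cauchy--Schwarz crudely to reach $\|f_2*f_3^-\|_{L^2(S_1(\varepsilon))}$, you have thrown away the angular information and can only recover the plain Loomis--Whitney bound $\varepsilon^{3/2}N_3^{-1/2}$, which is too weak. Second, the proposed accounting ``local transversality $\sim 2^{-k}N_3$ vs.\ angular measure $\sim 2^{-k}$, so each scale contributes $\varepsilon^{3/2}$'' does not work as stated: applying \eqref{est:ThickenedLW} on a dyadic angular piece gives $\varepsilon^{3/2}(2^k/N_3)^{1/2}\|f_2^{(k)}\|_{L^2}\|f_3^{(k)}\|_{L^2}$, and restricting to an angular sector of measure $2^{-k}$ does \emph{not} shrink the $L^2$-norms of $f_2$, $f_3$ by $(2^{-k})^{1/2}$. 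The paper gets the needed gain from a different mechanism: it decomposes $\supp f_3$ into $\varepsilon$-balls, proves (Claim~\ref{claim1}) that for each such ball and each angular scale $A$ the set $S_{1,j}^A(\varepsilon)$ has angular width $\sim A\varepsilon/N_3$ at fixed radius, and then establishes the almost-orthogonality bound \eqref{est:orthogonality-0.7}, which is where Assumption~\ref{assumption:hypersurface}\ref{assumption3-Ass2} (the space-time resonance condition) enters through a delicate contradiction argument (Claim~\ref{claim2}). None of this machinery appears in your outline, yet without it the sum over angular scales does not give the stated bound. You also omit the separate treatment of the regime $N_3\lesssim\varepsilon$, which the paper handles first by an $L^1$--$L^\infty$ pairing.

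\textbf{The $p=2$ endpoint by angular Fourier series} is plausible in spirit (restricting to $\ell_1+\ell_2+\ell_3=0$ is a natural use of the radial structure), but as written it is too vague to verify: the crucial step ``the Jacobian $\sim N_3$ is absorbed by the polar measure $r_1r_2r_3\sim N_3$'' elides the degeneration of the angular Jacobian when the triangle becomes flat, and ``Parseval reconstructs the $L^2$ norms'' requires an almost-orthogonality argument (in effect what \eqref{est:orthogonality-0.7} accomplishes in the paper), which is exactly the hard part. In the paper's unified argument, the $p=2$ case drops out automatically from the same decomposition and does not need a separate Fourier-series analysis.
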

\begin{rem}
In contrast to Theorem~\ref{thm:GNLWR3}, the implicit constant of \eqref{est:ThickenedLWrad} depends on $\beta$.
\end{rem}
The following proposition shows that Theorem~\ref{theorem:trilinearest} is almost optimal.
\begin{prop}\label{prop:SharpnessFactor}
In the estimate \eqref{est:ThickenedLWrad}, the factor $C(\varepsilon,p,N_3)$ defined in \eqref{defn:ConstantThm3.4} is optimal if $2 \leq p < \infty$ and almost optimal if $p=\infty$. Specifically, under the same assumptions for $S_i$, $N_i$ $(i=1,2,3)$ as in Theorem~\ref{theorem:trilinearest}, for any $0< \varepsilon \ll 1$, there exist $f_1$, $f_2$, $f_3$ satisfying $\supp f_i \subset \{(\tau,\xi) \, | \, |\xi| \sim N_i\}$ such that
\begin{equation}\label{est:ThickenedLWrad2}
| (f_1 * f_2 * f_3)(0)| \gtrsim \widetilde{C}(\varepsilon,p, N_3)  \|f_1\|_{L_{\tau,r}^2 L_{\theta}^{p}(S_1(\varepsilon))} \|f_2\|_{L^2(S_2(\varepsilon))} \|f_3\|_{L^2(S_3(\varepsilon))},
\end{equation}
where 
\[
\widetilde{C}(\varepsilon,p, N_3) =
\begin{cases}
\varepsilon^{\frac32 - \frac{1}{p}} N_{3}^{\frac{1}{p}} \ & (2 \leq p < \infty),\\
\langle \log \varepsilon \rangle^{\frac12} \varepsilon^{\frac32} \ & (p= \infty).
\end{cases}
\]
\end{prop}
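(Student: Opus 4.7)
The approach is to exhibit explicit indicator-function test triples $(f_1,f_2,f_3)$ saturating \eqref{est:ThickenedLWrad}. By Assumption~\ref{assumption:hypersurface}\ref{assumption1-Ass2} (radial symmetry) and rotation invariance of the convolution identity at the origin, one may fix the base configuration $\xi_1\approx e_1$, $\xi_3\approx N_3 e_2$, $\xi_2=-\xi_1-\xi_3\approx -e_1-N_3 e_2$, at which point Assumption~\ref{assumption:hypersurface}\ref{assumption3-Ass2} ensures $|\nabla\varphi_2(\xi_2)-\nabla\varphi_3(\xi_3)|\sim 1$, so that the phase-resonance strip in $\xi_1$ is $\sim\varepsilon$-wide in a uniformly transverse direction.

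For $2\leq p<\infty$, introduce parameters $\delta_1,\delta_3>0$ (to be tuned) and set
\begin{align*}
A_1 &:= \{(\tau,\xi)\in S_1(\varepsilon):\bigl||\xi|-1\bigr|<\varepsilon,\ \arg\xi\in[-\delta_1/2,\delta_1/2]\},\\
A_3 &:= \{(\tau,\xi)\in S_3(\varepsilon):\bigl||\xi|-N_3\bigr|<\varepsilon,\ \arg\xi\in[\tfrac{\pi}{2}-\delta_3/2,\tfrac{\pi}{2}+\delta_3/2]\},\\
A_2 &:= \{(\tau,\xi)\in S_2(\varepsilon):\ \xi\in -A_1^{(\xi)}-A_3^{(\xi)}\},
\end{align*}
with $f_i=\mathbf{1}_{A_i}$, where $A_i^{(\xi)}$ denotes the projection of $A_i$ to the $\xi$-plane. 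Direct calculation gives $\|f_1\|_{L^2_{\tau,r}L^p_\theta}\sim\delta_1^{1/p}\varepsilon$, $\|f_3\|_{L^2}\sim(N_3\delta_3)^{1/2}\varepsilon$, and $\|f_2\|_{L^2}\sim(\delta_1+N_3\delta_3)^{1/2}\varepsilon$. The trilinear form $(f_1*f_2*f_3)(0)$ equals $\varepsilon^2$ times the Lebesgue measure of $\{(\xi_1,\xi_3)\in A_1^{(\xi)}\times A_3^{(\xi)} : |\varphi_1(\xi_1)+\varphi_2(-\xi_1-\xi_3)+\varphi_3(\xi_3)|\lesssim\varepsilon\}$, the factor $\varepsilon^2$ coming from integrating $\tau_1,\tau_2,\tau_3$ subject to $\tau_1+\tau_2+\tau_3=0$. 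A geometric argument using \ref{assumption:hypersurface}\ref{assumption3-Ass2} bounds this measure from below by $N_3\delta_1\delta_3$ for appropriate $\delta_1,\delta_3$, and after optimization the ratio realizes $\widetilde C(\varepsilon,p,N_3)=\varepsilon^{3/2-1/p}N_3^{1/p}$.

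For $p=\infty$, the single-patch construction only yields $\varepsilon^{3/2}$. To pick up the logarithmic gain $\langle\log\varepsilon\rangle^{1/2}$, one superposes $M\sim\langle\log\varepsilon\rangle$ copies of the above $A_1$ at dyadically varying angular widths $\delta_1^{(k)}\in(\varepsilon,N_3)$. The $L^2_{\tau,r}L^\infty_\theta$ norm of the superposition scales only like $M^{1/2}$ (since at each $(\tau,r)$ only the supremum over $\theta$ enters), while the trilinear form accumulates contributions from all $M$ scales coherently; meanwhile $f_2,f_3$ can be enlarged so as to serve all scales at the cost of only a constant factor in their $L^2$ norms.

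The most delicate step is the geometric verification that the resonance strip has the expected width $\sim\varepsilon$ uniformly over $(\xi_1,\xi_3)\in A_1^{(\xi)}\times A_3^{(\xi)}$ under only the $C^{1,\beta}$ H\"older and transversality hypotheses of Assumption~\ref{assumption:hypersurface}. The radial symmetry reduces this to a quadrature in two parameters, while the transversality condition guarantees that the strip orientation is not aligned with the patch edges; if it were aligned, one would lose a factor of $\min(\delta_1,\delta_3)/\varepsilon$ and the construction would fail to be sharp. A secondary subtlety is tuning $\delta_1,\delta_3$ jointly so that the competing sizes $\delta_1$ and $N_3\delta_3$ in $\|f_2\|_{L^2}$ are balanced and the resulting ratio exactly equals the claimed $\widetilde C(\varepsilon,p,N_3)$.
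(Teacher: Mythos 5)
Your proposal for $2\le p<\infty$ is in the same spirit as the paper (explicit indicator-function test triples saturating the bound), but you leave the crucial step --- the ``optimization'' of $\delta_1,\delta_3$ --- as a placeholder. The paper does not optimize over free parameters; it instead makes one specific choice, namely $N_3\sim\varepsilon^{1/2}$ with $T_1(\varepsilon)$ an angular sector of width $\varepsilon^{1/2}$ at $|\xi|\sim 1$, $T_2(\varepsilon)=-T_1(\varepsilon)$, and $T_3(\varepsilon)$ a $\varepsilon\times\varepsilon^{1/2}$ box in $\xi$ near $\varepsilon^{1/2}e_2$. With these, $\|f_1\|_{L^2_{\tau,r}L^p_\theta}\sim\varepsilon^{1+1/(2p)}$, $\|f_2\|_{L^2}\sim\|f_3\|_{L^2}\sim\varepsilon^{5/4}$, and $(f_1*f_2*f_3)(0)\sim\varepsilon^5$, which matches $\widetilde C(\varepsilon,p,\varepsilon^{1/2})=\varepsilon^{3/2-1/(2p)}$ exactly. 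Without writing out the corresponding arithmetic and without verifying the resonance-measure lower bound for your patches, your argument is incomplete, though the underlying idea is essentially correct.

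The $p=\infty$ case is where your proposal has a genuine gap. You propose superposing $M\sim\langle\log\varepsilon\rangle$ copies of $A_1$ at dyadically varying angular widths and assert ``the $L^2_{\tau,r}L^\infty_\theta$ norm of the superposition scales only like $M^{1/2}$.'' This is not true as stated: if the patches are nested (all containing $\theta=0$) with coefficient $1$, then $\sup_\theta\sum_k\mathbf 1_{A_1^{(k)}}(\theta)=M$ at $\theta=0$, so the $L^\infty_\theta$ norm is $M$, not $M^{1/2}$. If the patches are angularly disjoint the supremum is $1$ but the trilinear form no longer accumulates $M$ coherent contributions against a fixed $f_2,f_3$. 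In either case you do not get the factor $\langle\log\varepsilon\rangle^{1/2}$ for free, and you have not shown the claimed coherent accumulation. The paper's construction is structurally different and sidesteps this issue entirely: it takes $f_1=\mathbf 1_{A_1(\varepsilon)}$ to be a \emph{full rotationally symmetric annulus} so that $\|f_1\|_{L^2_{\tau,r}L^\infty_\theta}\sim\|f_1\|_{L^2}$ trivially (the sup over $\theta$ equals the average), and the logarithm is manufactured in $f_3=\sum_{1\le N\le 2^{-5}\varepsilon^{-1/2}}N^{-1/2}\mathbf 1_{A_{3,N}(\varepsilon)}$, a weighted sum over dyadic radial shells $|\xi|\in[\varepsilon^{1/2}+N\varepsilon,\varepsilon^{1/2}+2N\varepsilon]$. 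Then $\|f_3\|_{L^2}\sim\langle\log\varepsilon\rangle^{1/2}\varepsilon^{5/4}$ (an $\ell^2$-sum over $N$) while $(f_1*f_2*f_3)(0)\sim\langle\log\varepsilon\rangle\varepsilon^{19/4}$ (a coherent $\ell^1$-sum), yielding the half-power log in the ratio. A further subtlety you miss: to make each $A_{3,N}$ resonant, the paper shifts the second hypersurface to $\widetilde S_2=\{(-|\xi|^2-2\varepsilon^{1/2},\xi)\}$ and places $A_2$ on the annulus $|\xi|\approx 1-\varepsilon^{1/2}$, and the verification that the resonance set has the right measure $\sim N^{1/2}\varepsilon^{19/4}$ for each $N$ is itself a Taylor-expansion computation, not an automatic consequence of transversality. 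You should redo the $p=\infty$ case along these lines.
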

In Section~\ref{Section5}, we will prove Proposition~\ref{prop:SharpnessFactor} by giving explicit functions $f_1$, $f_2$, $f_3$ which satisfy \eqref{est:ThickenedLWrad2}.

Note that, by duality, it holds that
\[
\sup_{\|f_3 \|_{L^2(S_3(\varepsilon))} =1} | (f_1 * f_2 * f_3)(0)| = \|f_1 * f_2 \|_{L^2(S_3^- (\varepsilon))}, 
\]
where $S_3^- = \{ (\tau, \xi) \in \R^3 \,| \, (-\tau,-\xi) \in S_3 \}$. Thus \eqref{est:ThickenedLWrad} is equivalent to
\[
\|f_1 * f_2 \|_{L^2(S_3^- (\varepsilon))} \lesssim C(\varepsilon,p, N_3)  \|f_1\|_{L_{\tau,r}^2 L_{\theta}^{p}(S_1(\varepsilon))} \|f_2\|_{L^2(S_2(\varepsilon))},
\]
which is similar in form to \eqref{est:ThickenedLW}.

Before turning to the proof, we give comments on \textit{Assumption} \ref{assumption:hypersurface} and Theorem \ref{theorem:trilinearest}. 
\begin{rem}\label{rem3.3}
\begin{enumerate}[label=(\arabic*),  leftmargin=20pt]
\item Hypersurfaces which satisfy \textit{Assumption} \ref{assumption:hypersurface} are localized in space around origin and given as graphs of radially symmetric functions. We need such spatial radially symmetry assumption on hypersurfaces to exploit the angular regularity assumption of the function $f_1$.
\item The key difference between Theorems \ref{thm:GNLWR3} and \ref{theorem:trilinearest} is that in \textit{Assumption} \ref{assumption:hypersurface}, the transversality condition on hypersurfaces \eqref{eq:TransversalityAssumption} in \textit{Assumption} \ref{AssumptionSurfaces} is not required. We will see that Theorem \ref{theorem:trilinearest} provides the better estimate than that given by Theorem \ref{thm:GNLWR3} by exploiting that angular integrability condition on $f_1$.
\item The assumption \ref{assumption3-Ass2} in \textit{Assumption} \ref{assumption:hypersurface} is related to the space-time resonance in \cite{GMS09}. Precisely, the assumption \ref{assumption3-Ass2} ensures that the space-time resonance set
\[
\biggl\{ (\xi_1,\xi_2) \in \R^2 \times \R^2 \, \biggl| \, 
\begin{aligned} &|\varphi_1(\xi_1)+ \varphi_2(\xi_2) + \varphi_3(-\xi_1-\xi_2)|=0, \\ 
&| \nabla \varphi_1(\xi_1)- \nabla \varphi_2 (\xi_2)| =0.
\end{aligned}
\biggr\}
\]
is separated from $\{(\xi_1,\xi_2) \, | \, |\nabla \varphi_2(\xi_2)- \nabla \varphi_3 (- \xi_1-\xi_2)|=0 \}$.
\item Our approach for the proof of Theorem \ref{theorem:trilinearest} is inspired heavily by the geometric argument in the proof of Proposition 5.6 in \cite{Bej06}. 
\item We may expect that Theorem \ref{theorem:trilinearest} will be applied to the study of variant dispersive equations. For example, we most likely obtain the analogous nontrivial result for the generalized equation:
\[
i\partial_{t} u (t,x) + (-\Delta)^{\frac{a}{2}} u(t,x) =\eta |u(t,x)|^2,
\] 
with $a>1$ under some angular regularity assumption. See examples of hypersurfaces below which satisfy Assumption \ref{assumption:hypersurface}. For this reason, in \textit{Assumption} \ref{assumption:hypersurface}, the necessary conditions of hypersurfaces are given in a general way, 
which might seem unnecessarily complex to show Theorem \ref{thm1}. 
\end{enumerate}
\end{rem}
Here are a few examples of hypersurfaces which satisfy \textit{Assumption} \ref{assumption:hypersurface}.
\begin{rem}\label{rem3.4}
\begin{enumerate}[label=(\arabic*),  leftmargin=20pt]
\item Let $\varphi_{a,\pm}(\xi)=\pm |\xi|^a$ with $a>1$. For $j=1,2,3$, hypersurfaces given by the functions $\varphi_j = \varphi_{a,\pm_j}$ satisfy \textit{Assumption} \ref{assumption:hypersurface} regardless of the choice of the signs $\{\pm_1,\pm_2,\pm_3\}$. 
Hence, in particular, the hypersurfaces given by $\varphi_{2,\pm_j}(\xi)=\pm_j |\xi|^2$, 
the phase functions of the free Schr\"{o}dinger equation $i \partial_t u \mp_j \Delta u=0$, satisfy \textit{Assumption}~\ref{assumption:hypersurface}, and we can utilize Theorem \ref{theorem:trilinearest} in the proof of Proposition~\ref{prop2.2}.
\item Let $\sigma_1$, $\sigma_2$, $\sigma_3 \in \R \setminus \{0\}$. Assume $1/\sigma_1 + 1/\sigma_2 + 1 / \sigma_3 \not= 0$. Then, the hypersurfaces given by $\varphi_1 (\xi)= \sigma_1 | \xi|^2$, $\varphi_2 (\xi)= \sigma_2|\xi|^2$, $\varphi_3 (\xi)= \sigma_3 |\xi|^2$ satisfy \textit{Assumption}~\ref{assumption:hypersurface}. 
While, if $1/\sigma_1 + 1/\sigma_2 + 1 / \sigma_3 = 0$, the given hypersurfaces satisfy the first two assumptions of 
\textit{Assumption}~\ref{assumption:hypersurface} but fail to fulfill the last one. 
Indeed, for any $c \in \R$, taking $\xi_1 = (\sigma_2 c, 0)$, $\xi_2 = (\sigma_1 c, 0)$, $\xi_3=( - (\sigma_1 + \sigma_2) c,0)$, we may see that $\xi_1+\xi_2+\xi_3=0$, $\varphi_1(\xi_1) + \varphi_2(\xi_2) + \varphi_3(\xi_3)=0$, and
\[
| \nabla \varphi_1(\xi_1)- \nabla \varphi_2 (\xi_2)|+|\nabla \varphi_2(\xi_2)- \nabla \varphi_3 (\xi_3)|=0.
\]
If $1/\sigma_1 + 1/\sigma_2 + 1 / \sigma_3 = 0$, for the hypersufaces $S_1$, $S_2$, $S_3$ given by 
$\varphi_1$, $\varphi_2$, $\varphi_3$, we will see that \eqref{est:ThickenedLWrad} fails to hold. 
We can take $\sigma_1$, $\sigma_2$, $\sigma_3 \in \R \setminus \{0\}$ so that $1/\sigma_1 + 1/\sigma_2 + 1 / \sigma_3 = 0$ and $\sigma_1+\sigma_2 \not= 0$. 
Then, for any $0<\varepsilon \ll 1$, if we define $f_1$, $f_2$, $f_3$ by
\[
f_1 = \mathbf{1}_{C_1}, \quad f_2 = \mathbf{1}_{C_2}, \quad f_3 = \mathbf{1}_{C_3},
\]
where
\begin{align*}
& C_1 = \{ (\tau,\xi) \in \R^3 \, |\, |\tau-\sigma_1|\xi|^2|\leq \varepsilon, \quad |\xi - (\sigma_2,0)| \leq \varepsilon^{\frac12}\},\\
& C_2 = \{ (\tau,\xi) \in \R^3 \, |\, |\tau-\sigma_2|\xi|^2|\leq \varepsilon, \quad |\xi - (\sigma_1,0)| \leq \varepsilon^{\frac12}\},\\
& C_3 = \{ (\tau,\xi) \in \R^3 \, |\, |\tau-\sigma_3|\xi|^2|\leq \varepsilon, \quad |\xi + (\sigma_1+\sigma_2,0)| \leq \varepsilon^{\frac12}\},
\end{align*}
then it holds that
\begin{equation}\label{est:remark3.4}
| (f_1 * f_2 * f_3)(0)| \sim \varepsilon^{\frac54 - \frac{1}{2p}} \|f_1\|_{L_{\tau,r}^2 L_{\theta}^{p}(S_1(\varepsilon))} \|f_2\|_{L^2(S_2(\varepsilon))} \|f_3\|_{L^2(S_3(\varepsilon))}.
\end{equation}
While, since $\sigma_1+\sigma_2 \not= 0$, \eqref{est:ThickenedLWrad} implies
\[
| (f_1 * f_2 * f_3)(0)| \lesssim \varepsilon^{\frac32 - \frac{1}{p}} \|f_1\|_{L_{\tau,r}^2 L_{\theta}^{p}(S_1(\varepsilon))} \|f_2\|_{L^2(S_2(\varepsilon))} \|f_3\|_{L^2(S_3(\varepsilon))}.
\]
If $2<p$, this clearly contradicts \eqref{est:remark3.4} by letting $\varepsilon \to 0$.

Notice that, in the study of the Schr\"{o}dinger system which is corresponding system to the phase functions $\varphi_1$, $\varphi_2$, $\varphi_3$, the relation $1/\sigma_1 + 1/\sigma_2 + 1 / \sigma_3 = 0$ is called mass resonance relation. 
The Schr\"{o}dinger system under mass resonance shows a significant difference from that under the non-mass resonance case $1/\sigma_1 + 1/\sigma_2 + 1 / \sigma_3 \not= 0$. 
For the works of Schr\"{o}dinger system under mass resonance, we refer to \cite{IKS13}, \cite{H14}, \cite{IKO16}, \cite{SaSu}, \cite{KS20}, and for the results under non-mass resonance case, see e.g., \cite{H14}, \cite{HirKin}, \cite{HKO}, \cite{HKO2}.
\end{enumerate}
\end{rem}

\begin{proof}[Proof of Theorem \ref{theorem:trilinearest}]
We first consider the case $N_3 \lesssim \varepsilon$. 
Since $|\supp f_3| \lesssim \varepsilon N_3^2$, for any $2 \leq p \leq \infty$, a simple computation gives
\begin{align*}
| (f_1 * f_2 * f_3)(0)| & \leq \|f_1 * f_2\|_{L^{\infty}} \|f_3\|_{L^1(S_3(\varepsilon))}\\
& \lesssim \varepsilon^{\frac12} N_3
\|f_1\|_{L^2(S_1(\varepsilon))} \|f_2\|_{L^2(S_2(\varepsilon))} \|f_3\|_{L^2(S_3(\varepsilon))}\\
& \leq \varepsilon^{\frac12} N_3
\|f_1\|_{L_{\tau,r}^2 L_{\theta}^{p}(S_1(\varepsilon))} \|f_2\|_{L^2(S_2(\varepsilon))} \|f_3\|_{L^2(S_3(\varepsilon))}.
\end{align*}
Since $N_3 \lesssim \varepsilon$ implies $\varepsilon^{\frac12} N_3 \lesssim C(\varepsilon, p , N_3)$ for any $2 \leq p \leq \infty$, this completes the proof of~\eqref{est:ThickenedLWrad} when $N_3 \lesssim \varepsilon$. 

We assume $N_3 \gg \varepsilon$. 
Without loss of generality, we may assume that $f_1$, $f_2$, $f_3$ are nonnegative. 
For $i=1,2,3$, because of the support condition of $f_i$, by applying harmless decomposition, there exist $c_i$, $d >0$ such that $c_i \sim N_i$, 
$d=d(\max_{i=1,2,3}\|\varphi_i\|_{C^{1,\beta}({B_2})})\ll1$ and we can replace the original $S_i(\varepsilon)$ defined in \eqref{defn:ThickenedS} by 
\begin{equation}\label{def:Thickenedhypersurfaces}
S_i(\varepsilon) = \{(\tau,\xi)\in \R^3 \, | \, |\tau - \varphi_i(\xi)| < \varepsilon, \ c_i \leq |\xi| \leq c_i+d \}.
\end{equation}
Let $A \in 2^{\N}$ satisfy $1 \lesssim A \lesssim (N_3/\varepsilon)^{\frac12}$. 
As in the proof of \cite[Theorem 4.3]{KS21}, we define a finitely overlapping family of balls with radius $\varepsilon$ covering $\R^3$ by $\{ B_{\varepsilon,j} \}_{j \in \mathbb{N}}$ and decompose the support of $f_3$ into $\{ B_{\varepsilon,j} \}_{j \in J_{3,\varepsilon}}$ where $J_{3,\varepsilon} = \{ j \in \N \, | \, B_{\varepsilon,j} \cap S_3(\varepsilon) \not= \emptyset \}$. We define
\begin{align*}
&S_{1,j}^A(\varepsilon) = \biggl\{(\tau_1,\xi_1) \in S_1(\varepsilon) \, \biggl| \, 
\begin{aligned}
& \mathrm{There \ exists} \ (\tau,\xi)\in B_{\varepsilon,j} \ \mathrm{such \ that} \\
& |\sin \angle(\xi_1,\xi)| \sim A^{-1} \ \mathrm{and} \  
 - (\tau,\xi) - (\tau_1,\xi_1) \in S_2(\varepsilon) 
\end{aligned} \ 
\biggr\},\\
& S_{2,j}^A(\varepsilon) = \biggl\{(\tau_2,\xi_2) \in S_2(\varepsilon) \, \biggl| \, 
\begin{aligned}
& \mathrm{There \ exists} \ (\tau,\xi)\in B_{\varepsilon,j} \ \mathrm{such \ that} \\
& |\sin \angle(\xi_2,\xi)| \sim A^{-1} \ \mathrm{and} \  - (\tau,\xi) - (\tau_2,\xi_2) \in S_1(\varepsilon) 
\end{aligned} \ 
\biggr\},\\
& \widetilde{S}_{1,j}^A(\varepsilon) = \biggl\{(\tau_1,\xi_1) \in S_1(\varepsilon) \, \biggl| \, 
\begin{aligned}
& \mathrm{There \ exists} \ (\tau,\xi)\in B_{\varepsilon,j} \ \mathrm{such \ that} \\
& |\sin \angle(\xi_1,\xi)| \lesssim A^{-1} \ \mathrm{and} \  - (\tau,\xi) - (\tau_1,\xi_1) \in S_2(\varepsilon) 
\end{aligned} \ 
\biggr\},\\
& \widetilde{S}_{2,j}^A(\varepsilon) = \biggl\{(\tau_2,\xi_2) \in S_2(\varepsilon) \, \biggl| \, 
\begin{aligned}
& \mathrm{There \ exists} \ (\tau,\xi)\in B_{\varepsilon,j} \ \mathrm{such \ that} \\
& |\sin \angle(\xi_2,\xi)| \lesssim A^{-1} \ \mathrm{and} \  - (\tau,\xi) - (\tau_2,\xi_2) \in S_1(\varepsilon) 
\end{aligned} \ 
\biggr\}.
\end{align*}
Let $A_0$ be a dyadic number such that $A_0 \sim (N_3/\varepsilon)^{\frac12}$. 
For $\xi=(\xi^{(1)}, \xi^{(2)}) \in \R^2$, we define $\xi^{\perp}  = (-\xi^{(2)}, \xi^{(1)})$. Note that if $|\xi_1| \sim |\xi+\xi_1|$, we have 
\[
|\sin \angle(\xi,\xi_1)| = \frac{|\xi^{\perp} \cdot \xi_1|}{|\xi| | \xi_1|}
 \sim \frac{|\xi^{\perp} \cdot (\xi+\xi_1)|}{|\xi| |\xi+ \xi_1|} = |\sin \angle (\xi, -(\xi+\xi_1))|.
\] 
Then, $(\tau,\xi) \in B_{\varepsilon, j}$ and $(\tau_1,\xi_1) \in S_{1,j}^A(\varepsilon)$ imply $(-\tau - \tau_1,-\xi- \xi_1) \in S_{2,j}^{A}(\varepsilon)$. 
Hence, we may decompose $| (f_1 * f_2 * f_3)(0)|$ as
\begin{align*}
 | (f_1 * f_2 * f_3)(0)|  \lesssim & \sum_{1 \lesssim A \lesssim A_0} \sum_{j \in J_{3,\varepsilon}} 
 \bigl| \bigl( f_1|_{S_{1,j}^A(\varepsilon)}* f_2|_{S_{2,j}^A(\varepsilon)} * f_3|_{B_{\varepsilon,j}}\bigr)(0)\bigr|\\
& + \sum_{j \in J_{3,\varepsilon}} 
 \bigl| \bigl( f_1|_{\widetilde{S}_{1,j}^{A_0}(\varepsilon)}* f_2|_{\widetilde{S}_{2,j}^{A_0}(\varepsilon)} * f_3|_{B_{\varepsilon,j}} \bigr)(0)\bigr|.
\end{align*}
Since $|B_{\varepsilon,j}| \sim \varepsilon^3$, the Cauchy-Schwarz inequality yields
\begin{align}
\label{est:conv-0.1}
\begin{split}
\bigl| \bigl( f_1|_{S_{1,j}^A(\varepsilon)}*  f_2|_{S_{2,j}^A(\varepsilon)} * & f_3|_{B_{\varepsilon,j}}\bigr)(0)\bigr|\\ & \lesssim \varepsilon^{\frac32} \bigl\| f_1|_{S_{1,j}^A(\varepsilon)}\bigr\|_{L^2} \bigl\| f_2|_{S_{2,j}^A(\varepsilon)}\bigr\|_{L^2} \bigl\|f_3|_{B_{\varepsilon,j}}\bigr\|_{L^2},
\end{split}\\
\label{est:conv-0.2}
\begin{split}
\bigl| \bigl( f_1|_{\widetilde{S}_{1,j}^{A_0}(\varepsilon)}*  f_2|_{\widetilde{S}_{2,j}^{A_0}(\varepsilon)} * & f_3|_{B_{\varepsilon,j}} \bigr)(0)\bigr|\\
& \lesssim \varepsilon^{\frac32} \bigl\| f_1|_{\widetilde{S}_{1,j}^{A_0}(\varepsilon)}\bigr\|_{L^2} \bigl\| f_2|_{\widetilde{S}_{2,j}^{A_0}(\varepsilon)}\bigr\|_{L^2} \bigl\|f_3|_{B_{\varepsilon,j}}\bigr\|_{L^2}.
\end{split}
\end{align}
Because of \eqref{est:conv-0.1} and \eqref{est:conv-0.2}, it suffices to show the following:
\begin{align}\label{est:sum-A-0.1}
\begin{split}
 \sum_{j \in J_{3,\varepsilon}}  \bigl\| f_1|_{S_{1,j}^A(\varepsilon)} & \bigr\|_{L^2} \bigl\| f_2|_{S_{2,j}^A(\varepsilon)}\bigr\|_{L^2}  \bigl\|f_3|_{B_{\varepsilon,j}}\bigr\|_{L^2}\\
& \lesssim \Bigl( \frac{N_3}{\varepsilon A} \Bigr)^{\frac{1}{p}} \|f_1\|_{L_{\tau,r}^2 L_{\theta}^{p}(S_1(\varepsilon))} \|f_2\|_{L^2(S_2(\varepsilon))} \|f_3\|_{L^2(S_3(\varepsilon))},
\end{split}\\
\label{est:sum-A-0.2}
\begin{split}
\sum_{j \in J_{3,\varepsilon}} \bigl\| f_1|_{\widetilde{S}_{1,j}^{A_0}(\varepsilon)} & \bigr\|_{L^2} \bigl\| f_2|_{\widetilde{S}_{2,j}^{A_0}(\varepsilon)}\bigr\|_{L^2}  \bigl\|f_3|_{B_{\varepsilon,j}}\bigr\|_{L^2}\\
&\lesssim  \Bigl( \frac{N_3}{\varepsilon} \Bigr)^{\frac{1}{2p}} \|f_1\|_{L_{\tau,r}^2 L_{\theta}^{p}(S_1(\varepsilon))} \|f_2\|_{L^2(S_2(\varepsilon))} \|f_3\|_{L^2(S_3(\varepsilon))}.
\end{split}
\end{align}

To show these estimates, we observe widths of the sets $S_{1,j}^A(\varepsilon)$, $S_{2,j}^A(\varepsilon)$. Let us fix $A$, $j \in J_{3,\varepsilon}$ of $S_{1,j}^A(\varepsilon)$ and write $(\tau_1,\xi_1) = (\tau_1,r_1 \cos \theta_1, r_1 \sin \theta_1)$. 
\begin{claim}\label{claim1}
If $r_1$ is fixed and $(\tau_1, r_1 \cos \theta_1, r_1 \sin \theta_1) \in S_{1,j}^A(\varepsilon)$, then $\theta_1$ is confined to at most four intervals of length $\sim A \varepsilon/N_3$.
\end{claim}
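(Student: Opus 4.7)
The plan is to extract an almost-radial phase identity from the constraints defining $S_{1,j}^A(\varepsilon)$, and then use the law of cosines together with the angular separation $|\sin\angle(\xi_1,\xi)|\sim A^{-1}$ to convert it into the desired confinement of $\theta_1$.

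First I would fix $(\tau_1,\xi_1)\in S_{1,j}^A(\varepsilon)$ with $\xi_1=r_1(\cos\theta_1,\sin\theta_1)$ and let $(\tau^\ast,\xi^\ast)$ denote the centre of $B_{\varepsilon,j}$. By definition there exist $(\tau,\xi)\in B_{\varepsilon,j}$ and $(\tau_2,\xi_2):=-(\tau,\xi)-(\tau_1,\xi_1)\in S_2(\varepsilon)$ with $|\sin\angle(\xi_1,\xi)|\sim A^{-1}$. Since $j\in J_{3,\varepsilon}$ and $B_{\varepsilon,j}$ has radius $\varepsilon$, I may assume $|\tau^\ast-\varphi_3(\xi^\ast)|\lesssim\varepsilon$. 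Combining this with the three $\varepsilon$-localizations of $\tau_1,\tau_2,\tau$, the identity $\tau+\tau_1+\tau_2=0$, the Lipschitz continuity of $\varphi_2$, the relation $\xi_2=-\xi^\ast-\xi_1+O(\varepsilon)$, and assumption~\ref{assumption1-Ass2} (radiality of the $\varphi_i$), I arrive at
\[
\phi_1(r_1)+\phi_2(|\xi^\ast+\xi_1|)+\phi_3(r^\ast)=O(\varepsilon),
\]
where $\phi_i$ is the radial profile of $\varphi_i$ and $r^\ast=|\xi^\ast|\sim N_3$.

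Next I would invert this relation in $|\xi^\ast+\xi_1|$. Since $|\xi^\ast+\xi_1|\sim|\xi_2|\sim 1$, assumption~\ref{assumption2-Ass2} gives $|\phi_2'|\sim 1$ at the relevant argument, so $|\xi^\ast+\xi_1|$ is pinned to an interval of length $O(\varepsilon)$. The law of cosines
\[
|\xi^\ast+\xi_1|^2=(r^\ast)^2+r_1^2+2r^\ast r_1\cos(\theta_1-\theta^\ast),\qquad\theta^\ast:=\arg\xi^\ast,
\]
then pins $\cos(\theta_1-\theta^\ast)$ to an interval of length $O(\varepsilon/(r^\ast r_1))=O(\varepsilon/N_3)$.

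Finally I would convert this into the claimed interval in $\theta_1$. Since $\xi=\xi^\ast+O(\varepsilon)$ with $|\xi|\sim N_3$, one has $|\sin(\theta_1-\theta^\ast)|=|\sin\angle(\xi_1,\xi^\ast)|\sim A^{-1}$, because the perturbation error $O(\varepsilon/N_3)$ is dominated by $A^{-1}\gtrsim(\varepsilon/N_3)^{1/2}$ (from $A\lesssim A_0\sim(N_3/\varepsilon)^{1/2}$). Because $\theta_1\mapsto\cos(\theta_1-\theta^\ast)$ has derivative of magnitude $\sim A^{-1}$, an interval of length $O(\varepsilon/N_3)$ in the cosine pulls back to one of length $\sim A\varepsilon/N_3$ in $\theta_1$; and $|\sin(\theta_1-\theta^\ast)|\sim A^{-1}$ admits at most four branches modulo $2\pi$ (near $0$ or near $\pi$, each with two choices of sign), yielding the four intervals. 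The main obstacle I expect is the careful bookkeeping of the various $\varepsilon$-perturbations and verifying $|\xi^\ast+\xi_1|\sim 1$ so that assumption~\ref{assumption2-Ass2} really delivers the nondegeneracy $|\phi_2'|\sim 1$; once these are secured the rest is a clean geometric computation.
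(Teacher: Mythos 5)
Your proof is correct and takes essentially the same approach as the paper: the paper derives the phase constraint $|\varphi_1(\xi_1)+\varphi_2(\xi+\xi_1)+\varphi_3(\xi)|\lesssim\varepsilon$ and then directly computes $|\partial_{\theta_1}\varphi_2(\xi+\xi_1)| = |\xi_1^\perp\cdot\nabla\varphi_2(\xi+\xi_1)| \sim A^{-1}N_3$ using the radial structure and $|\sin\angle(\xi,\xi_1)|\sim A^{-1}$, whereas you reach the identical derivative bound by the equivalent two-step route of pinning $|\xi^\ast+\xi_1|$ to an $O(\varepsilon)$ interval via $|\phi_2'|\sim 1$ and then using the law of cosines. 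Both yield the same confinement $\sim A\varepsilon/N_3$, with the count of four intervals coming from the dyadic angular constraint exactly as you describe.
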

\begin{proof}[Proof of Claim~\ref{claim1}]
We recall that $(\tau_1,\xi_1) \in S_{1,j}^A(\varepsilon)$ means $(\tau_1,\xi_1) \in S_1(\varepsilon)$ and the existence of $(\tau, \xi) \in B_{\varepsilon,j}$ such that $(-\tau-\tau_1, -\xi-\xi_1) \in S_2(\varepsilon)$, which yield
\[
|\varphi_1(\xi_1) + \varphi_2(\xi+\xi_1)+\varphi_3(\xi)| \lesssim \varepsilon.
\]
Thus, since $\varphi_i \in C^{1,\beta}({B_2})$ and the radius of $B_{\varepsilon,j}$ is $\varepsilon$, it suffices to see that for fixed $r_1>0$ it holds that
\[
\bigl|\partial_{\theta_1} \bigl( \varphi_1 (\xi_1) +  \varphi_2 (\xi +\xi_1) \bigr) \bigr| \sim A^{-1} N_3.
\]
For a radial function $\varphi:\R^2 \to \R$, we define $\widetilde{\varphi} (r) = \varphi(r,0)$ for $r>0$. 
If $\varphi:\R^2 \to \R$ is $C^{1}$ and radial, for $\xi \in \R^2$, it holds that
\[
\nabla \varphi(\xi) = 
 \frac{\widetilde{\varphi}'(|\xi|)}{|\xi|}\xi=
  \mathrm{sgn} \, \widetilde{\varphi}'(|\xi|) \frac{|\nabla \varphi(\xi)|}{|\xi|}\xi.
\] 
Thus, it follows from \ref{assumption1-Ass2} and \ref{assumption2-Ass2} in \textit{Assumption}~\ref{assumption:hypersurface} that
\begin{align*}
\bigl|\partial_{\theta_1} \bigl( \varphi_1 (\xi_1) +  \varphi_2 (\xi +\xi_1) \bigr) \bigr|& = 
|\xi_1^{\perp} \cdot \nabla \varphi_2(\xi + \xi_1)|\\
& = \frac{| \nabla\varphi_2(\xi+\xi_1)|}{|\xi+\xi_1|} |\xi_1^{\perp} \cdot (\xi+\xi_1) |\\
& = \frac{|\xi| \, |\xi_1|}{|\xi+\xi_1|}| \nabla\varphi_2(\xi+\xi_1)| |\sin \angle(\xi, \xi_1)|\sim A^{-1} N_3.
\end{align*}
\end{proof}
Similarly, for fixed $A$, $j \in J_{3,\varepsilon}$, $r_1>0$, if $(\tau_2, r_2 \cos \theta_2, r_2 \sin \theta_2) \in S_{2,j}^A(\varepsilon)$ then $\theta_2$ is confined to at most four intervals of length comparable to $A \varepsilon/N_3$. 

For $\ell \in \N$, 
we define $D_{\ell}= \{(\tau,\xi)\, | \, \ell \varepsilon \leq |\xi| \leq (\ell+1) \varepsilon \}$ and 
$J_{3, \varepsilon,\ell} = \{j \in J_{3,\varepsilon} \,| \, B_{\varepsilon,j} \cap D_{\ell} \not= \emptyset \}$. 
We put
$\mathcal{L}_{S_3} = \{ \ell \in \N \,| \, S_3(\varepsilon)\cap D_{\ell} \not= \emptyset\}$. 
For $\ell \in \mathcal{L}_{S_3}$, we define
\begin{align*}
T_{1,\ell}^A(\varepsilon) = \biggl\{ (\tau_1,\xi_1) \in S_1(\varepsilon) \, \biggl| \ \, 
\begin{aligned}
& \mathrm{There \ exists} \ (\tau_1', \xi_1') \in {\bigcup}_{j \in J_{3,\varepsilon,\ell}}S_{1,j}^{A}(\varepsilon) \\
& \mathrm{such \ that} \ |\xi_1|= |\xi_1'| 
\end{aligned} \, \biggr\},\\
T_{2,\ell}^A(\varepsilon) = \biggl\{ (\tau_2,\xi_2) \in S_2(\varepsilon) \, \biggl| \ \, 
\begin{aligned}
& \mathrm{There \ exists} \  (\tau_2', \xi_2') \in {\bigcup}_{j \in J_{3,\varepsilon,\ell}}S_{2,j}^{A}(\varepsilon) \\
& \mathrm{such \ that} \ |\xi_2|= |\xi_2'|
\end{aligned} \, \biggr\}.
\end{align*}
Note that $(\tau,\xi) \in T_{i,\ell}^A(\varepsilon)$ and $|\xi| = |\eta|$ imply $(\tau,\eta) \in T_{i,\ell}^A(\varepsilon)$ $(i=1,2)$. 
For fixed $j \in J_{3,\varepsilon}$, it follows from Claim~\ref{claim1} that
\begin{equation}\label{est:WidthofS1jA}
\begin{split}
\| f_1|_{S_{1,j}^A(\varepsilon)} \|_{L^2} & \leq \| \textbf{1}_{S_{1,j}^A(\varepsilon)} \|_{L_{\tau,r}^{\infty} L_{\theta}^{\frac{2p}{p-2}}} \|f_1 |_{S_{1,j}^A(\varepsilon)}\|_{L_{\tau,r}^2 L_{\theta}^{p}}\\
&  \lesssim \Bigl( \frac{A \varepsilon}{N_3} \Bigr)^{\frac12-\frac{1}{p}}\|f_1 |_{S_{1,j}^A(\varepsilon)}\|_{L_{\tau,r}^2 L_{\theta}^{p}}.
\end{split}
\end{equation}
Similarly, because of the width restriction of $S_{2,j}^A(\varepsilon)$ as in Claim~\ref{claim1}, for fixed $\ell \in \mathcal{L}_{S_3}$, it holds that
\[
\sum_{j \in J_{3,\varepsilon,\ell}} \mathbf{1}_{S_{2,j}^A(\varepsilon)} \lesssim A.
\]
This and $\bigcup_{j \in J_{3,\varepsilon,\ell}} S_{2,j}^A(\varepsilon) \subset T_{2,\ell}^A(\varepsilon)$ yield
\begin{equation}\label{est:Sumofj}
\Bigl( \sum_{j \in J_{3,\varepsilon,\ell}}  \bigl\| f_2|_{S_{2,j}^A(\varepsilon)}\bigr\|_{L^2}^2 \Bigr)^{\frac12} \lesssim A^{\frac12}\bigl\| f_2|_{T_{2,\ell}^A(\varepsilon)}\bigr\|_{L^2}.
\end{equation}
Consequently, by using the Cauchy-Schwarz inequality and \eqref{est:WidthofS1jA}, \eqref{est:Sumofj}, we have
\begin{align*}
& \sum_{j \in J_{3,\varepsilon}} \bigl\| f_1|_{S_{1,j}^A(\varepsilon)}\bigr\|_{L^2} \bigl\| f_2|_{S_{2,j}^A(\varepsilon)}\bigr\|_{L^2} \bigl\|f_3|_{B_{\varepsilon,j}}\bigr\|_{L^2}\\
\lesssim & \sum_{\ell \in \mathcal{L}_{S_3}} \sum_{j \in J_{3,\varepsilon,\ell}} \bigl\| f_1|_{S_{1,j}^A(\varepsilon)}\bigr\|_{L^2} \bigl\| f_2|_{S_{2,j}^A(\varepsilon)}\bigr\|_{L^2} \bigl\|f_3|_{B_{\varepsilon,j}}\bigr\|_{L^2}\\
\lesssim & \sum_{\ell \in \mathcal{L}_{S_3}}  \sup_{j \in J_{3,\varepsilon,\ell}} \bigl\| f_1|_{S_{1,j}^A(\varepsilon)} \bigl\|_{L^2} \Bigl( \sum_{j \in J_{3,\varepsilon,\ell}}  \bigl\| f_2|_{S_{2,j}^A(\varepsilon)}\bigr\|_{L^2}^2 \Bigr)^{\frac12} \Bigl( \sum_{j \in J_{3,\varepsilon,\ell}}  \bigl\|f_3|_{B_{\varepsilon,j}}\bigr\|_{L^2}^2 \Bigr)^{\frac12}\\
\lesssim & \sum_{\ell \in \mathcal{L}_{S_3}}A^{1-\frac{1}{p}} \Bigl( \frac{\varepsilon}{N_3} \Bigr)^{\frac12-\frac{1}{p}}\bigl\| f_1|_{T_{1,\ell}^A(\varepsilon)} \bigl\|_{L_{\tau,r}^2 L_{\theta}^{p}} 
\bigl\| f_2|_{T_{2,\ell}^A(\varepsilon)}\bigr\|_{L^2} \Bigl( \sum_{j \in J_{3,\varepsilon,\ell}}  \bigl\|f_3|_{B_{\varepsilon,j}}\bigr\|_{L^2}^2 \Bigr)^{\frac12}\\
\lesssim & A^{1-\frac{1}{p}} \Bigl( \frac{\varepsilon}{N_3} \Bigr)^{\frac12-\frac{1}{p}} \Bigl( \sum_{\ell \in \mathcal{L}_{S_3}}\bigl\| f_1|_{T_{1,\ell}^A(\varepsilon)} \bigl\|_{L_{\tau,r}^2 L_{\theta}^{p}}^2 
\bigl\| f_2|_{T_{2,\ell}^A(\varepsilon)}\bigr\|_{L^2}^2 \Bigr)^{\frac12} \|f_3 \|_{L^2(S_{3(\varepsilon)})}.
\end{align*}
Hence, the estimate \eqref{est:sum-A-0.1} follows from 
\begin{equation}\label{est:orthogonality-0.7}
\sup_{\substack{(\tau_1,\xi_1) \in S_1(\varepsilon) \\(\tau_2,\xi_2) \in S_2(\varepsilon)}} \sum_{\ell \in \mathcal{L}_{S_3}} \mathbf{1}_{T_{1,\ell}^A(\varepsilon)}(\tau_1,\xi_1) \mathbf{1}_{T_{2,\ell}^A(\varepsilon)} (\tau_2,\xi_2) \lesssim \frac{N_3}{ A^2 \varepsilon}.
\end{equation}
Because the number of $\ell$ in $\mathcal{L}_{S_3}$ is comparable to $N_3 / \varepsilon$, we assume $A \gg 1$.

We prove \eqref{est:orthogonality-0.7} by contradiction. 
Suppose that, for some fixed $(\tau_1,\xi_1) \in S_1(\varepsilon)$ and $(\tau_2,\xi_2) \in S_2(\varepsilon)$, there exist $\ell_1$, $\ell_2 \in \mathcal{L}_{S_3}$ such that $A^{-2}N_3 \ll (\ell_1-\ell_2)\varepsilon \leq \min(N_3,d)$ 
(notice that this gives $\ell_1-\ell_2 \gg 1$ since $A \lesssim (N_3/\varepsilon)^{\frac12}$) where $d \ll 1$ is defined in \eqref{def:Thickenedhypersurfaces} and 
\[
(\tau_1,\xi_1) \in T_{1,\ell_1}^A(\varepsilon) \cap T_{1,\ell_2}^A(\varepsilon), \quad (\tau_2,\xi_2) \in T_{2,\ell_1}^A(\varepsilon) \cap T_{2,\ell_2}^A(\varepsilon).
\]
We deduce from $(\tau_1,\xi_1) \in T_{1,\ell_1}^A(\varepsilon)$ that there exists $(\tau', \xi') \in S_{3}(4\varepsilon)$ such that $||\xi'|-\ell_1 \varepsilon| \leq 2 \varepsilon$, $|\sin \angle(\xi_1,\xi')| \sim A^{-1}$ and $-(\tau',\xi')-(\tau_1,\xi_1) \in S_2(4\varepsilon)$. 
Similarly, because $(\tau_1,\xi_1) \in T_{1,\ell_2}^A(\varepsilon)$, we may find $(\widetilde{\tau}, \widetilde{\xi}) \in S_{3}(4\varepsilon)$ which satisfy $||\widetilde{\xi}|-\ell_2 \varepsilon| \leq 2 \varepsilon$, $|\sin \angle(\xi_1,\widetilde{\xi})| \sim A^{-1}$ and $-(\widetilde{\tau},\widetilde{\xi})-(\tau_1,\xi_1) \in S_2(4\varepsilon)$. 
We observe that $(\tau', \xi') \in S_{3}(4\varepsilon)$ and $(\widetilde{\tau}, \widetilde{\xi}) \in S_{3}(4\varepsilon)$ give $|(\tau'-\widetilde{\tau})-(\varphi_3(\xi') - \varphi_3(\widetilde{\xi}))| \lesssim \varepsilon$. 
In addition, $-(\tau',\xi')-(\tau_1,\xi_1) \in S_2(4\varepsilon)$ and $-(\widetilde{\tau},\widetilde{\xi})-(\tau_1,\xi_1) \in S_2(4\varepsilon)$ yield $|(\tau'- \widetilde{\tau})+ (\varphi_2(\xi'+\xi_1)-\varphi_2(\widetilde{\xi}+\xi_1))| \lesssim \varepsilon$. 
Therefore, it holds that
\begin{equation}\label{est:normal-0.1}
|(\varphi_2(\xi'+\xi_1)-\varphi_2(\widetilde{\xi}+\xi_1))+(\varphi_3(\xi') - \varphi_3(\widetilde{\xi}))| \lesssim \varepsilon.
\end{equation}
\begin{claim}\label{claim2}
The estimate \eqref{est:normal-0.1} implies
\begin{equation}\label{est:Nablaphi2phi3}
|\nabla \varphi_2 (-\xi'-\xi_1)- \nabla \varphi_3(\xi')| \ll 1.
\end{equation}
\end{claim}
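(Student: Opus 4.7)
My plan is to exploit the radial symmetry of $\varphi_2$ and $\varphi_3$ to collapse the two-dimensional obstruction \eqref{est:normal-0.1} into a scalar (radial) one. By rotational invariance I may assume $\xi_1 = (c_1, 0)$ with $c_1 = |\xi_1| \sim 1$ and write $\xi' = r'(\cos\alpha, \sin\alpha)$, $\widetilde{\xi} = \widetilde{r}(\cos\widetilde{\alpha}, \sin\widetilde{\alpha})$. The geometric hypotheses established before Claim~\ref{claim2} give $r', \widetilde{r} \sim N_3$, $|\sin\alpha|, |\sin\widetilde{\alpha}| \sim A^{-1}$, and $r' - \widetilde{r} \sim (\ell_1 - \ell_2)\varepsilon \gg A^{-2} N_3$. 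I introduce
\[
F(r,\beta) := \widetilde{\varphi}_2(R(r,\beta)) + \widetilde{\varphi}_3(r), \qquad R(r,\beta) := \sqrt{r^2 + 2 c_1 r \cos\beta + c_1^2},
\]
so that radiality of $\varphi_2, \varphi_3$ turns \eqref{est:normal-0.1} into the scalar inequality $|F(r', \alpha) - F(\widetilde{r}, \widetilde{\alpha})| \lesssim \varepsilon$.

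Next I split $F(r',\alpha) - F(\widetilde{r},\widetilde{\alpha}) = [F(r',\alpha) - F(\widetilde{r},\alpha)] + [F(\widetilde{r},\alpha) - F(\widetilde{r},\widetilde{\alpha})]$ and handle the pieces separately. For the angular part, a direct computation gives $|\partial_\beta F(r, \beta)| = |\widetilde{\varphi}_2'(R)| \cdot r c_1 |\sin\beta|/R \lesssim N_3/A$ on the relevant range (using $R \sim 1$ and $|\widetilde{\varphi}_2'| \sim 1$ from \textit{Assumption}~\ref{assumption:hypersurface}~\ref{assumption2-Ass2}), hence $|F(\widetilde{r}, \alpha) - F(\widetilde{r}, \widetilde{\alpha})| \lesssim |\alpha - \widetilde{\alpha}| \cdot N_3/A \lesssim N_3/A^2$. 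Applying the mean value theorem in $r$ to the remaining piece then produces $\rho \in (\widetilde{r}, r')$ with
\[
|\partial_r F(\rho, \alpha)| \lesssim \frac{\varepsilon + N_3/A^2}{(\ell_1 - \ell_2)\varepsilon} = \frac{1}{\ell_1 - \ell_2} + \frac{N_3/A^2}{(\ell_1 - \ell_2)\varepsilon} \ll 1,
\]
where I use $\ell_1 - \ell_2 \gg 1$ (a consequence of $(\ell_1-\ell_2)\varepsilon \gg A^{-2}N_3 \gtrsim \varepsilon$, since $A \lesssim (N_3/\varepsilon)^{1/2}$) and the main separation hypothesis. The $C^\beta$ continuity of $\partial_r F$ inherited from $\widetilde{\varphi}_2, \widetilde{\varphi}_3 \in C^{1,\beta}$, combined with $(r' - \widetilde{r})^\beta \leq ((\ell_1-\ell_2)\varepsilon)^\beta \leq d^\beta \ll 1$, upgrades this to $|\partial_r F(r', \alpha)| \ll 1$.

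The final and crucial step is an algebraic identity. Setting $K := \widetilde{\varphi}_3'(r') + r' \widetilde{\varphi}_2'(R)/R$ and $L := c_1 \widetilde{\varphi}_2'(R)/R$ with $R = R(r',\alpha)$, the radial formulas $\nabla \varphi_2(\eta) = \widetilde{\varphi}_2'(|\eta|)\eta/|\eta|$, $\nabla\varphi_3(\eta) = \widetilde{\varphi}_3'(|\eta|)\eta/|\eta|$ yield, after direct computation,
\[
V := \nabla \varphi_3(\xi') - \nabla \varphi_2(-\xi' - \xi_1) = (K\cos\alpha + L,\; K\sin\alpha), \qquad \partial_r F(r',\alpha) = K + L\cos\alpha,
\]
and expanding produces
\[
|V|^2 = K^2 + 2KL\cos\alpha + L^2 = (K + L\cos\alpha)^2 + L^2 \sin^2\alpha = (\partial_r F(r',\alpha))^2 + L^2 \sin^2\alpha.
\]
Since $R \sim 1$ and $|\widetilde{\varphi}_2'(R)| \sim 1$, we have $|L| \sim 1$, so $L^2 \sin^2\alpha \lesssim A^{-2} \ll 1$; together with the previous paragraph, this gives $|V| \ll 1$, which is \eqref{est:Nablaphi2phi3}. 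The hard part is recognizing the identity $|V|^2 = (\partial_r F(r',\alpha))^2 + L^2 \sin^2\alpha$, which is the mechanism by which radial symmetry decouples $V$ into a scalar obstruction (the radial derivative, controlled by the large separation $r' - \widetilde{r}$) plus a genuinely small angular remainder (controlled by the transversality angle $A^{-1}$).
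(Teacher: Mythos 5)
Your plan is an attractive repackaging of the paper's argument in polar coordinates, and the algebraic identity $|V|^2 = (\partial_r F(r',\alpha))^2 + L^2\sin^2\alpha$ is correct and genuinely simplifying: it makes the two cases the paper handles separately (angle near $0$ versus angle near $\pi$) collapse into one uniform computation, since the sign of $\partial_r R = (r + c_1\cos\beta)/R$ adapts automatically. The mean-value-theorem step in $r$ and the $C^{0,\beta}$ upgrade from $\partial_r F(\rho,\alpha)$ to $\partial_r F(r',\alpha)$ are also sound, given that $|r'-\widetilde{r}| \lesssim d$ with $d$ small.

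However, there is a real gap in the angular estimate. You assert
\[
|F(\widetilde{r},\alpha) - F(\widetilde{r},\widetilde{\alpha})| \lesssim |\alpha - \widetilde{\alpha}|\cdot N_3/A \lesssim N_3/A^2,
\]
which implicitly requires both $|\alpha - \widetilde{\alpha}| \lesssim A^{-1}$ and $|\sin\beta| \lesssim A^{-1}$ along the segment joining $\widetilde{\alpha}$ to $\alpha$. The hypotheses you invoke give only $|\sin\alpha|,|\sin\widetilde{\alpha}| \sim A^{-1}$, which is compatible with $\alpha$ being near $0$ and $\widetilde{\alpha}$ being near $\pi$ (after using evenness to put both in $[0,\pi]$). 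In that ``mixed'' configuration $|\alpha-\widetilde{\alpha}| \sim \pi$, the segment passes through $\beta = \pi/2$ where $|\partial_\beta F|\sim N_3$, and your estimate fails; indeed $|F(\widetilde{r},\alpha)-F(\widetilde{r},\widetilde{\alpha})|$ is then of size $N_3$, not $N_3/A^2$. Excluding this mixed case is exactly what the paper does in the first half of its proof of Claim~\ref{claim2}: it shows that if $\xi'$ and $\widetilde{\xi}$ lay on opposite sides of $\xi_1$, one would have $|\xi'+\xi_1| - |\widetilde{\xi}+\xi_1| \gtrsim N_3$, hence $|\varphi_2(\xi'+\xi_1)-\varphi_2(\widetilde{\xi}+\xi_1)| \gtrsim N_3$, while $|\varphi_3(\xi')-\varphi_3(\widetilde{\xi})| \lesssim (\ell_1-\ell_2)\varepsilon \ll N_3$, contradicting \eqref{est:normal-0.1}. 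You need to insert this preliminary reduction (readily phrased in your $F$-notation) before the mean-value-theorem step; with it in place, the rest of your argument goes through and gives a slightly cleaner conclusion than the paper's explicit difference-quotient case analysis.
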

\begin{proof}[Proof of Claim~\ref{claim2}]
Since $\varphi_2$ is radial, \eqref{est:Nablaphi2phi3} is equivalent to
\begin{equation}\label{est:Nablaphi2phi3-2}
|\nabla \varphi_2 (\xi'+\xi_1)+ \nabla \varphi_3(\xi')| \ll 1.
\end{equation}
Recall that $|\sin \angle(\xi_1,\xi')| \sim |\sin \angle(\xi_1,\widetilde{\xi})|\sim A^{-1}$. 
To prove \eqref{est:Nablaphi2phi3-2}, we first see that \eqref{est:normal-0.1} provides either $|\angle(\xi_1,\xi')| \sim |\angle(\xi_1,\widetilde{\xi})| \sim A^{-1}$ or $|\angle(\xi_1,\xi')- \pi| \sim |\angle(\xi_1,\widetilde{\xi})-\pi| \sim A^{-1}$. 
We will observe that $|\angle(\xi_1,\xi')| \sim |\angle(\xi_1,\widetilde{\xi})-\pi| \sim A^{-1}$ contradicts \eqref{est:normal-0.1}. In the same way, we may see that $|\angle(\xi_1,\xi')- \pi| \sim |\angle(\xi_1,\widetilde{\xi})| \sim A^{-1}$ contradicts \eqref{est:normal-0.1}. 
Since $A\gg 1$, it is easy to see that $|\angle(\xi_1,\xi')| \sim |\angle(\xi_1,\widetilde{\xi})-\pi| \sim A^{-1}$ implies
\begin{equation}\label{est:difference-01}
|\xi' +\xi_1| - |\widetilde{\xi} + \xi_1| \gtrsim N_3.
\end{equation}
Let us use the function $\widetilde{\varphi}_{i} (r) = \varphi_i(r,0)$. It follows from \ref{assumption1-Ass2} and \ref{assumption2-Ass2} in \textit{Assumption}~\ref{assumption:hypersurface} that $|\widetilde{\varphi}_i'(r)| \sim 1$ if $r \sim 1$. 
Hence, we deduce from \eqref{est:difference-01} that
\begin{equation}\label{est:difference-02}
|\varphi_2(\xi'+\xi_1)-\varphi_2(\widetilde{\xi}+\xi_1)| = |\widetilde{\varphi}_2(|\xi'+\xi_1|)-\widetilde{\varphi}_2(|\widetilde{\xi}+\xi_1|)| \gtrsim N_3.
\end{equation}
Here we used $|\xi' +\xi_1| \sim |\widetilde{\xi} + \xi_1| \sim 1$ which follows from $-(\tau',\xi')-(\tau_1,\xi_1) \in S_2(4\varepsilon)$, $-(\widetilde{\tau},\widetilde{\xi})-(\tau_1,\xi_1) \in S_2(4\varepsilon)$. 
While, since $||\xi'|-\ell_1 \varepsilon| \leq 2 \varepsilon$, $||\widetilde{\xi}|-\ell_2 \varepsilon| \leq 2 \varepsilon$, and $(\ell_1-\ell_2)\varepsilon \ll N_3$, we have
\begin{equation}\label{est:difference-03}
|\varphi_3(\xi') - \varphi_3(\widetilde{\xi})| \lesssim (\ell_1-\ell_2)\varepsilon \ll N_3.
\end{equation}
Clearly, since $\varepsilon \ll N_3$, the two estimates \eqref{est:difference-02} and \eqref{est:difference-03} contradict \eqref{est:normal-0.1}. 
Consequently, either $|\angle(\xi_1,\xi')| \sim |\angle(\xi_1,\widetilde{\xi})| \sim A^{-1}$ or $|\angle(\xi_1,\xi')- \pi| \sim |\angle(\xi_1,\widetilde{\xi})-\pi| \sim A^{-1}$ holds. 

Now we consider \eqref{est:Nablaphi2phi3-2}. 
Let us consider the case $|\angle(\xi_1,\xi')| \sim |\angle(\xi_1,\widetilde{\xi})| \sim A^{-1}$ first. 
A simple calculation yields
\[
0 \leq |\xi'| + |\xi_1| - |\xi'+\xi_1| \lesssim A^{-2} N_3, \quad 0 \leq |\widetilde{\xi}|+|\xi_1| - |\widetilde{\xi} + \xi_1| \lesssim A^{-2} N_3,
\]
which imply
\begin{equation}\label{est:difference-04}
\bigl| (|\xi'+\xi_1|-|\widetilde{\xi} + \xi_1|) - (|\xi'|-|\widetilde{\xi}|) \bigr| \lesssim A^{-2} N_3.
\end{equation}
Note that $|\xi'|-|\widetilde{\xi}| \sim (\ell_1-\ell_2)\varepsilon >0$. 
By using \eqref{est:normal-0.1} and \eqref{est:difference-04}, since $\varphi_2$, $\varphi_3$ are radial, $\varphi_2 \in C^{1,\beta}({B_2})$, and $A^{-2}N_3 \ll (\ell_1-\ell_2)\varepsilon$, we get
\begin{align}
\notag
&\biggl| \frac{{\varphi}_2(\xi'+\xi_1)- {\varphi}_2(\widetilde{\xi}+\xi_1)}{ |\xi'+\xi_1|-|\widetilde{\xi}+\xi_1|} +\frac{{\varphi}_3(\xi') - {\varphi}_3(\widetilde{\xi})}{|\xi'|-|\widetilde{\xi}|} \biggr|\\
\notag
 \leq & 
\biggl| \frac{{\varphi}_2(\xi'+\xi_1)- {\varphi}_2(\widetilde{\xi}+\xi_1)}{ |\xi'+\xi_1|-|\widetilde{\xi}+\xi_1|} - 
\frac{{\varphi}_2(\xi'+\xi_1)- {\varphi}_2(\widetilde{\xi}+\xi_1)}{|\xi'|-|\widetilde{\xi}|}\biggr| \\
& \qquad \notag 
+ \frac{\bigl|(\varphi_2(\xi'+\xi_1)-\varphi_2(\widetilde{\xi}+\xi_1))+(\varphi_3(\xi') - \varphi_3(\widetilde{\xi}))\bigr|}{|\xi'|-|\widetilde{\xi}|}\\
\notag 
\lesssim & \biggl|1- \frac{ |\xi'+\xi_1|-|\widetilde{\xi}+\xi_1|}{|\xi'|-|\widetilde{\xi}|} \biggr|+\frac{
|(\varphi_2(\xi'+\xi_1)-\varphi_2(\widetilde{\xi}+\xi_1))+(\varphi_3(\xi') - \varphi_3(\widetilde{\xi}))|}{(\ell_1-\ell_2)\varepsilon}\\
\label{est:difference-04.5}
\lesssim &  \frac{A^{-2}N_3 + \varepsilon}{(\ell_1-\ell_2)\varepsilon} \ll 1.
\end{align}

Here, since $\varphi_2$ and $\varphi_3$ are radial, we may assume that $\xi_1$ is on the right half of the first axis, i.e. $\xi_1 = (|\xi_1|,0)$. 
In addition to \eqref{est:difference-04.5}, we deduce from $\xi_1 = (|\xi_1|,0)$, $|\angle(\xi_1,\xi')| \sim |\angle(\xi_1,\widetilde{\xi})| \sim A^{-1}$, and $\varphi_2$, $\varphi_3 \in C^{1,\beta}({B_2})$ that
\begin{align}\label{est:difference-05}
& \biggl| \nabla \varphi_2(\xi'+\xi_1) - \Bigl( \frac{{\varphi}_2(\xi'+\xi_1)- {\varphi}_2(\widetilde{\xi}+\xi_1)}{ |\xi'+\xi_1|-|\widetilde{\xi}+\xi_1|},0\Bigr) \biggr|\ll 1,\\
\label{est:difference-06}
& \biggl| \nabla \varphi_3(\xi') - \Bigl( \frac{{\varphi}_3(\xi') - {\varphi}_3(\widetilde{\xi})}{|\xi'|-|\widetilde{\xi}|} , 0\Bigr) \biggr| \ll 1.
\end{align}
Here we used $|\xi'|-|\widetilde{\xi}| \sim (\ell_1-\ell_2)\varepsilon \ll 1$, and $|\xi'+\xi_1|-|\widetilde{\xi}+\xi_1| \ll1$ which follows from $|\xi'|-|\widetilde{\xi}| \ll1$ and \eqref{est:difference-04}. 
Consequently, by the triangle inequality, the estimates \eqref{est:difference-04.5}, \eqref{est:difference-05}, \eqref{est:difference-06} give \eqref{est:Nablaphi2phi3-2}.

Next, we consider the case $|\angle(\xi_1,\xi')- \pi| \sim |\angle(\xi_1,\widetilde{\xi})-\pi| \sim A^{-1}$. 
Similarly to the above case, we assume that $\xi_1 = (|\xi_1|,0)$. Let $\xi^{(1)} \in \R$ denote the first component of $\xi \in \R^2$. We divide the proof into the two cases. 
The first case is ${\xi'}^{(1)} + \xi_1^{(1)} >0$, $\widetilde{\xi}^{(1)} + \xi_1^{(1)} >0$, and the second is 
${\xi'}^{(1)} + \xi_1^{(1)} <0$, $\widetilde{\xi}^{(1)} + \xi_1^{(1)} <0$. 
In the case ${\xi'}^{(1)} + \xi_1^{(1)} >0$, $\widetilde{\xi}^{(1)} + \xi_1^{(1)} >0$, since $|\angle(\xi_1,\xi')-\pi| \sim |\angle(\xi_1,\widetilde{\xi})-\pi| \sim A^{-1}$, we obtain
\[
0\leq |\xi'+\xi_1| - (|\xi_1| -|\xi'|)  \lesssim A^{-2} N_3, \quad  
0 \leq |\widetilde{\xi} + \xi_1|  -( |\xi_1| -|\widetilde{\xi}|) \lesssim A^{-2} N_3,
\]
which give
\[
\bigl| (|\xi'+\xi_1|-|\widetilde{\xi} + \xi_1|) + (|\xi'|-|\widetilde{\xi}|) \bigr| \lesssim A^{-2} N_3.
\]
In a similar way as in the proof of \eqref{est:difference-04.5}, this estimate provides
\begin{equation}
\label{est:difference-07}
\biggl| \frac{{\varphi}_2(\xi'+\xi_1)- {\varphi}_2(\widetilde{\xi}+\xi_1)}{ |\xi'+\xi_1|-|\widetilde{\xi}+\xi_1|} -\frac{{\varphi}_3(\xi') - {\varphi}_3(\widetilde{\xi})}{|\xi'|-|\widetilde{\xi}|} \biggr| \ll 1.
\end{equation}
Note that, the assumptions $|\angle(\xi_1,\xi')- \pi| \sim |\angle(\xi_1,\widetilde{\xi})-\pi| \sim A^{-1}$ with $\xi_1 = (|\xi_1|,0)$ mean that ${\xi'}^{(1)}<0$, $\widetilde{\xi}^{(1)} < 0$. 
Hence, it follows from $\xi_1 = (|\xi_1|,0)$, $|\angle(\xi_1,\xi')- \pi| \sim |\angle(\xi_1,\widetilde{\xi})-\pi| \sim A^{-1}$, $|\xi'|-|\widetilde{\xi}| \ll 1$ and $|\widetilde{\xi}+\xi_1| - |\xi'+\xi_1| \ll 1$ that
\begin{align}\label{est:difference-09}
& \biggl| \nabla \varphi_2(\xi'+\xi_1) - \Bigl( \frac{{\varphi}_2(\xi'+\xi_1)- {\varphi}_2(\widetilde{\xi}+\xi_1)}{ |\xi'+\xi_1|-|\widetilde{\xi}+\xi_1|},0\Bigr) \biggr|\ll 1,\\
\label{est:difference-09}
& \biggl| \nabla \varphi_3(\xi') + \Bigl( \frac{{\varphi}_3(\xi') - {\varphi}_3(\widetilde{\xi})}{|\xi'|-|\widetilde{\xi}|} , 0\Bigr) \biggr| \ll 1.
\end{align}
Consequently, by combining \eqref{est:difference-07}-\eqref{est:difference-09}, we complete the proof of \eqref{est:Nablaphi2phi3-2} for the case ${\xi'}^{(1)} + \xi_1^{(1)} >0$, $\widetilde{\xi}^{(1)} + \xi_1^{(1)} >0$. The remaining case ${\xi'}^{(1)} + \xi_1^{(1)} <0$, $\widetilde{\xi}^{(1)} + \xi_1^{(1)} <0$ can be handled in a similar way. We omit the details.
\end{proof}
Now, by using \eqref{est:Nablaphi2phi3}, we conclude \eqref{est:orthogonality-0.7} by contradiction. As mentioned in \eqref{def:Thickenedhypersurfaces}, the spatial variable of $S_i(\varepsilon)$ is confined to annulus of small width. Thus, by the definition of $T_{2,\ell}^A(\varepsilon)$, it follows from $(\tau_2,\xi_2) \in T_{2,\ell_1}^A(\varepsilon) \cap T_{2,\ell_2}^A(\varepsilon)$ that we may find $\xi_2' \in \R^2$ such that $|\xi_2'|=|\xi_2|$, $(\tau_2,\xi_2') \in T_{2,\ell_1}^A(\varepsilon) \cap T_{2,\ell_2}^A(\varepsilon)$ and $|\xi_1+\xi_2'+\xi'| \ll 1$. 
In the same way as above, it follows from $(\tau_2,\xi_2') \in T_{2,\ell_1}^A(\varepsilon) \cap T_{2,\ell_2}^A(\varepsilon)$ that there exists $(\tau_0,\eta) \in S_3(4\varepsilon)$ which satisfy $||\eta|-\ell_1 \varepsilon|\leq 2 \varepsilon$, $-(\tau_0,\eta)-(\tau_2,\xi_2') \in S_1(4\varepsilon)$, $|\sin \angle(\eta, \xi_2')| \sim A^{-1}$ and
\begin{equation}\label{est:Nablaphi1phi3-01}
|\nabla \varphi_1 (-\eta-\xi_2')- \nabla \varphi_3(\eta)| \ll 1.
\end{equation}
Since $||\xi'|-|\eta||\leq 4 \varepsilon$, $|\sin \angle(\xi', - \xi_1-\xi')| \sim |\sin \angle(\eta, \xi_2')| \lesssim A^{-1}$, and $|\xi_1+\xi_2'+\xi'| \ll 1$, it is easily confirmed that $|\xi' - \eta|+|\xi_1+\xi_2' + \eta|\ll 1$. Hence, because $\varphi_1$, $\varphi_3 \in C^{1,\beta}({B_2})$, \eqref{est:Nablaphi1phi3-01} implies
\begin{equation}\label{est:Nablaphi1phi3-02}
|\nabla \varphi_1 (\xi_1)- \nabla \varphi_3(\xi')| \ll 1.
\end{equation}
Consequently, \eqref{est:Nablaphi2phi3} and \eqref{est:Nablaphi1phi3-02} give
\[
|\nabla\varphi_1(\xi_1)-\nabla\varphi_3(\xi')| + |\nabla \varphi_2(-\xi'-\xi_1) - \nabla \varphi_3(\xi')|\ll1.
\]
Recall that $(\tau_1 ,\xi_1) \in T_{1,\ell_1}^A(\varepsilon) \subset S_1(\varepsilon)$, $(\tau', \xi') \in S_{3}(4\varepsilon)$, $-(\tau',\xi')-(\tau_1,\xi_1) \in S_2(4\varepsilon)$. Thus, this contradicts the assumption \ref{assumption3-Ass2} in \textit{Assumption}~\ref{assumption:hypersurface}.

Lastly, we consider \eqref{est:sum-A-0.2}. Following the proof of \eqref{est:sum-A-0.1}, it suffices to prove
\[
\sup_{\substack{(\tau_1,\xi_1) \in S_1(\varepsilon) \\(\tau_2,\xi_2) \in S_2(\varepsilon)}} \sum_{\ell \in \mathcal{L}_{S_3}} \mathbf{1}_{\widetilde{T}_{1,\ell}^{A_0}(\varepsilon)}(\tau_1,\xi_1) \mathbf{1}_{\widetilde{T}_{2,\ell}^{A_0}(\varepsilon)} (\tau_2,\xi_2) \lesssim 1,
\]
where
\begin{align*}
\widetilde{T}_{1,\ell}^{A_0}(\varepsilon) = \biggl\{ (\tau_1,\xi_1) \in S_1(\varepsilon) \, \biggl| \ \ 
\begin{aligned}
& \mathrm{There \ exists} \ (\tau_1', \xi_1') \in {\bigcup}_{j \in J_{3,\varepsilon,\ell}}\widetilde{S}_{1,j}^{A_0}(\varepsilon) \\
& \mathrm{such \ that} \ |\xi_1|= |\xi_1'| 
\end{aligned} \ \biggr\},\\
\widetilde{T}_{2,\ell}^{A_0}(\varepsilon) = \biggl\{ (\tau_2,\xi_2) \in S_2(\varepsilon) \, \biggl| \ \ 
\begin{aligned}
& \mathrm{There \ exists} \  (\tau_2', \xi_2') \in {\bigcup}_{j \in J_{3,\varepsilon,\ell}}\widetilde{S}_{2,j}^{A_0}(\varepsilon) \\
& \mathrm{such \ that} \ |\xi_2|= |\xi_2'|
\end{aligned} \ \biggr\}.
\end{align*}
This can be shown in the same way as for \eqref{est:orthogonality-0.7}. We omit the details.
\end{proof}
\section{Proof of the bilinear estimate}
In this section, we prove Proposition~\ref{prop2.2}.

It is known that the standard Strichartz estimate (see \cite{KT98}) provides the following estimate. 
\begin{lem}[Lemma\ 2.3 in \cite{GTV97}]\label{Bo_Stri}
Let $L\in 2^{\N_0}$, $u \in L^{2}(\R\times \R^{2})$, and $(q,r)$ be an admissible pair for the Schr\"odinger equation with $d=2$, i.e. $q > 2$, 
$\frac{2}{q} =2(\frac{1}{2}-\frac{1}{r})$. 
Then, we have
\[
\|Q_{L}u\|_{L_{t}^{q}L_{x}^{r}}\lesssim L^{\frac{1}{2}}\|Q_{L}u\|_{L^{2}_{t,x}},
\]
where $Q_L$ is defined in Definition~\ref{definition2.1}.
\end{lem}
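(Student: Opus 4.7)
The plan is the standard reduction of the $X^{s,b}$-type space-time estimate to the Strichartz estimate for the free Schrödinger propagator, slicing the Fourier transform of $Q_L u$ by the modulation variable $\lambda = \tau - |\xi|^2$.

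First I would write, by Fourier inversion,
\[
Q_L u(t,x) = c \int_{\R} \int_{\R^2} e^{i(t\tau + x\cdot\xi)} \psi_L(\tau - |\xi|^2)\,\mathcal{F}_{t,x}u(\tau,\xi)\,d\xi\,d\tau,
\]
and perform the change of variables $\tau = \lambda + |\xi|^2$. Setting
\[
\widehat{h_\lambda}(\xi) := \psi_L(\lambda)\,\mathcal{F}_{t,x}u(\lambda+|\xi|^2,\xi) = \mathcal{F}_{t,x}(Q_L u)(\lambda+|\xi|^2,\xi),
\]
the inner $\xi$-integral is, up to a constant, the free Schrödinger flow applied to $h_\lambda$. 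Thus I obtain the representation
\[
Q_L u(t,x) = c' \int_{\R} e^{it\lambda}\bigl(e^{-it\Delta} h_\lambda\bigr)(x)\,d\lambda,
\]
and note that $\lambda \mapsto \widehat{h_\lambda}$ is supported in $|\lambda|\lesssim L$ because of $\psi_L$.

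Next I would combine Minkowski's inequality in $\lambda$ with the standard Strichartz estimate for the admissible pair $(q,r)$ in two dimensions, $\|e^{-it\Delta} f\|_{L^q_t L^r_x} \lesssim \|f\|_{L^2_x}$, to get
\[
\|Q_L u\|_{L^q_t L^r_x} \leq \int_{|\lambda|\lesssim L} \|e^{-it\Delta} h_\lambda\|_{L^q_t L^r_x}\,d\lambda \lesssim \int_{|\lambda|\lesssim L} \|h_\lambda\|_{L^2_x}\,d\lambda.
\]
Then Cauchy-Schwarz in $\lambda$, exploiting the support of length $\sim L$, produces the factor $L^{1/2}$:
\[
\int_{|\lambda|\lesssim L} \|h_\lambda\|_{L^2_x}\,d\lambda \lesssim L^{\frac12}\Bigl(\int_{\R}\|h_\lambda\|_{L^2_x}^2\,d\lambda\Bigr)^{\frac12}.
\]
Finally, Plancherel (undoing the change of variable) identifies the $\lambda$-$\xi$ integral on the right with $\|Q_L u\|_{L^2_{t,x}}^2$, completing the proof.

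There is no real obstacle here; the only point requiring mild care is to absorb the cutoff $\psi_L(\lambda)$ into the definition of $h_\lambda$ rather than leaving it outside as a weight. Doing so is what makes the last Plancherel step reproduce the norm $\|Q_L u\|_{L^2_{t,x}}$ exactly, rather than the larger quantity $\|u\|_{L^2_{t,x}}$.
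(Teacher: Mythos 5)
Your proof is correct and is precisely the standard argument the paper is implicitly citing: the paper gives no proof of this lemma, instead referring to Lemma~2.3 of \cite{GTV97}, which is proved by exactly this foliation of the Fourier support of $Q_L u$ along the modulation variable $\lambda = \tau - |\xi|^2$, followed by Minkowski, the Strichartz estimate for each slice $e^{-it\Delta} h_\lambda$, Cauchy--Schwarz over $|\lambda| \lesssim L$, and Plancherel. Your closing remark about absorbing $\psi_L(\lambda)$ into $h_\lambda$ (so that Plancherel returns $\|Q_L u\|_{L^2_{t,x}}$ rather than $\|u\|_{L^2_{t,x}}$) is the right point to flag.
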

By using Theorems~\ref{theorem:trilinearest}, we establish the trilinear estimate which plays a central role in the proof of Proposition~\ref{prop2.2}.
\begin{prop}\label{prop4.4}
For $i=1,2,3$, let $L_i$, $M_i$, $N_i \in 2^{\N_0}$, and $u_i \in L^2(\R \times \R^2)$. We define 
\[
v_{i}  = Q_{L_i}H_{M_i} P_{N_i} u_i,
\]
where $H_{M}$ is defined in \eqref{definition:H_M}. Then, for any $2 \leq p < \infty$, we have
\begin{equation}
\label{est:goal-prop4.3}
\Bigl| \int v_1 \overline{v}_2 \overline{v}_3 dt dx \Bigr| \lesssim (L_1L_2L_3)^{\frac12} M_{\min}^{\frac12 - \frac1p}N_{\max}^{-1+\frac{1}{p}} N_{\min}^{\frac1p} \prod_{i=1,2,3} \|v_i\|_{L_{t,x}^2},
\end{equation}
where $N_{\min} = \min_{i=1,2,3} N_i$ and $N_{\max}$, $M_{\min}$ are defined in a similar way.
\end{prop}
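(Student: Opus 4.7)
The plan is to rewrite the trilinear form as a convolution of Fourier transforms at the origin via Plancherel, rescale the largest spatial frequency to unit size, apply Theorem~\ref{theorem:trilinearest} on the resulting thickened paraboloids, and convert the angular $L^{p}_{\theta}$ gain into the factor $M_{\min}^{1/2-1/p}$ via the Sobolev--Bernstein inequality on $\Sp^{1}$.

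By Plancherel,
\[
\int v_{1}\overline{v}_{2}\overline{v}_{3}\,dt\,dx = c\,\bigl(G_{1}\ast G_{2}^{\#}\ast G_{3}^{\#}\bigr)(0,0),
\]
with $G_{i}:=\F_{t,x}v_{i}$ and $G_{i}^{\#}(\tau,\xi):=\overline{G_{i}(-\tau,-\xi)}$. Here $G_{1}$ is supported in an $L_{1}$-slab about $\tau=|\xi|^{2}$ with $|\xi|\sim N_{1}$ and angular frequency $\sim M_{1}$, while $G_{2}^{\#},G_{3}^{\#}$ lie in analogous slabs about $\tau=-|\xi|^{2}$. The convolution constraint $\xi_{1}+\xi_{2}+\xi_{3}=0$ forces the two largest $N_{i}$ to be comparable to $N_{\max}$. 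Rescaling $\xi\mapsto N_{\max}\xi$, $\tau\mapsto N_{\max}^{2}\tau$ brings us to the regime where the two large frequencies are $\sim 1$, the small one is $\sim N_{\min}/N_{\max}$, and the slab thicknesses become $\varepsilon_{i}:=L_{i}/N_{\max}^{2}$. The hypersurfaces $\tau=\pm|\xi|^{2}$ are radial with $|\nabla\varphi(\xi)|=2|\xi|\sim 1$, and condition~\ref{assumption3-Ass2} of Assumption~\ref{assumption:hypersurface} holds since the signed sum $1+(-1)+(-1)=-1\neq 0$, placing us in the non-mass-resonant regime of Remark~\ref{rem3.4}(2). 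Hence Theorem~\ref{theorem:trilinearest} is applicable.

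I then apply Theorem~\ref{theorem:trilinearest} by placing in position~3 the function realizing $N_{\min}$ and in position~1 the one realizing the smallest $M_{i}$ among the two large-$N$ functions, so that it receives the $L^{p}_{\theta}$ gain. The Sobolev--Bernstein inequality on $\Sp^{1}$, $\|H_{M}h\|_{L^{p}_{\theta}}\lesssim M^{1/2-1/p}\|H_{M}h\|_{L^{2}_{\theta}}$ for $2\le p<\infty$, applied pointwise in $(\tau,r)$ converts the position-1 $L^{p}_{\theta}$-norm into an $L^{2}$-norm at the cost of a factor $M^{1/2-1/p}$. A standard refinement of Theorem~\ref{theorem:trilinearest} for slabs of unequal thicknesses---obtained by Cauchy--Schwarz in the normal direction to each slab---replaces the single factor $\varepsilon^{3/2-1/p}$ in Theorem~\ref{theorem:trilinearest} by $(\varepsilon_{1}\varepsilon_{2}\varepsilon_{3})^{1/2}$ times the same $N_{3}^{1/p}$ angular piece. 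Combining these with $\|G_{i}\|_{L^{2}}=\|v_{i}\|_{L^{2}}$ (Plancherel) and the scaling Jacobian yields the stated bound with the factor $M^{1/2-1/p}$, which equals $M_{\min}^{1/2-1/p}$ whenever the minimum is attained on one of the two large-$N$ functions.

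The main obstacle is the sub-case in which the minimum angular frequency is attained at the small-$N$ function, which must sit in position~3 of Theorem~\ref{theorem:trilinearest} and carries only an $L^{2}$-bound there. In that sub-case the direct scheme above gives only $M_{\text{pos 1}}^{1/2-1/p}$ with $M_{\text{pos 1}}>M_{\min}$, falling short of the claim. The resolution is a separate case analysis: one may rerun the covering/transversality argument of Theorem~\ref{theorem:trilinearest} with the $L^{p}_{\theta}$ norm placed on the position-3 function (the radial symmetry of the hypersurfaces together with the non-resonance condition still produce the required angular restriction, at the cost of slightly different constants), or alternatively estimate the trilinear form directly by H\"older in $\theta$ combined with the Strichartz bound of Lemma~\ref{Bo_Stri}; either route recovers $M_{\min}^{1/2-1/p}$. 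This case analysis, together with the verification that the unequal-thickness refinement produces the product $(L_{1}L_{2}L_{3})^{1/2}$ cleanly, is where the argument requires the most care.
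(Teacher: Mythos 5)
Your high-level plan (Plancherel, rescale to unit frequency, apply Theorem~\ref{theorem:trilinearest}, and convert the angular $L^p_\theta$ gain into a factor $M_{\min}^{1/2-1/p}$ by Bernstein on $\Sp^1$) is in the right spirit, but there are two substantive deviations from what actually works, and one of them is a genuine gap that you yourself flag but do not close.

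The gap is the sub-case where $M_{\min}$ is attained on the small-$N$ function. In Theorem~\ref{theorem:trilinearest} the $L^p_\theta$ gain is tied to the function in position~1, which must be one of the two large-$N$ functions; your scheme therefore produces only $M_{\text{pos 1}}^{1/2-1/p}$ in this sub-case, as you note. Your first proposed fix --- rerunning the covering/transversality argument with the $L^p_\theta$ norm on position~3 --- is not a routine modification: the angular width estimate in Claim~\ref{claim1} ($\partial_{\theta_1}(\varphi_1(\xi_1)+\varphi_2(\xi+\xi_1))\sim A^{-1}N_3$) exploits that $\xi_1$ has unit size and $\xi$ has size $N_3$, and one cannot simply swap their roles and expect the same exponent. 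Your second proposed fix (H\"older in $\theta$ plus Lemma~\ref{Bo_Stri}) is not elaborated and it is unclear it produces $M_{\min}^{1/2-1/p}$ rather than a weaker power. The paper avoids this case distinction entirely by a preliminary almost orthogonality reduction in $L^2(\Sp^1)$: because the angular Fourier modes in the trilinear form must satisfy the convolution constraint $\ell_1 = \ell_2 + \ell_3$ with $|\ell_i|\sim M_i$, one may replace each $v_i$ by a sum over a set $K_i$ of cardinality $\sim M_{\min}$. After that reduction, Bernstein applied to the position-1 function already yields $M_{\min}^{1/2-1/p}$ regardless of where the minimum was originally attained. This is the missing idea in your proposal.

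The second deviation is your ``standard refinement'' of Theorem~\ref{theorem:trilinearest} to slabs of unequal thickness $\varepsilon_i = L_i/N_{\max}^2$, replacing $\varepsilon^{3/2-1/p}$ by $(\varepsilon_1\varepsilon_2\varepsilon_3)^{1/2}$ times $N_3^{1/p}$. This is not a straightforward Cauchy--Schwarz refinement, because in \eqref{defn:ConstantThm3.4} the exponent $-1/p$ on $\varepsilon$ comes from the angular covering count, which is tied to the \emph{common} slab thickness and not separable from the $3/2$ in the way your formula suggests. The paper sidesteps this by decomposing each $Q_{L_i}$ into $\sim L_i$ pieces of unit (rescaled) thickness via the operators $R_\ell$, applying Theorem~\ref{theorem:trilinearest} to each piece with a single $\varepsilon$, and summing by Cauchy--Schwarz; the factor $(L_1L_2L_3)^{1/2}$ then falls out cleanly from the triple sum. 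You would need to either reproduce this decomposition or actually prove the unequal-thickness claim.

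There is also a small issue in the reduction: the paper needs a separate dyadic decomposition ($\mathcal{P}_{\mathfrak{N}}$) to handle the case $N_{\min}=1$, since $P_1$ is a low-pass projection rather than a dyadic shell; your rescaling step implicitly assumes $N_{\min}$ is a genuine dyadic shell. This is minor but must be addressed for a complete proof.

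In short: the general route is right, but the almost orthogonality step is essential and absent, and without it the sub-case you flag remains unresolved; the unequal-thickness extension is asserted but not justified and is better replaced by the paper's $R_\ell$ decomposition.
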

\begin{proof}
By an almost orthogonality in $L^2(\Sp^1)$ (see e.g.~\cite{Tao01}), we may assume that for each $i=1$, $2$, $3$, there exists a set $K_i \subset \Z$ whose cardinality is comparable to $M_{\min}$, i.e. $\# K_i \sim M_{\min}$, such that $v_i$ can be described by the Fourier series expansion as
\begin{equation}\label{est:FSE}
v_i(t,x) = v_i(t,r \cos \theta, r \sin \theta) = \sum_{k \in K_i} v_{i,k}(t,r) e^{ik\theta} \quad \mathrm{in} \ L^2(\R^3),
\end{equation}
where $v_{i,k}:= \langle v_i, e^{ik \theta} \rangle_{L_{\theta}^2}$. 

We recall that $\chi \in C^{\infty}_{0}((-2,2))$ is an even, non-negative function such that $\chi (t)=1$ for $|t|\leq 1$ and $\psi (t):=\chi (t)-\chi (2t)$. 
For $\mathfrak{N} \in 2^{\Z}$, we define $\phi_{\mathfrak{N}}(t):=\psi (\mathfrak{N}^{-1}t)$ and 
$\widehat{{\mathcal{P}}_{\mathfrak{N}}f}(\xi) := \phi_\mathfrak{N} (\xi) \widehat{f}(\xi)$. 
Then, for $f \in L^2(\R^2)$, clearly it holds that
\begin{equation}\label{equality:LP}
P_N = \mathcal{P}_{N} \quad \mathrm{if} \ N \in 2^{\N}, \qquad P_1 f = \sum_{\mathfrak{N} \leq 1} \mathcal{P}_{\mathfrak{N}} f.
\end{equation}
Here we recall $\widehat{P_1 f}(\xi)= \chi (\xi) \widehat{f}(\xi)$. 

Let $\rho \in C_0^{\infty}(\R)$ be a non-negative function satisfying $\supp \rho \subset (-2,2)$ such that
\[
\sum_{\ell \in \Z} \rho(\tau-\ell) = 1 \quad \mathrm{for} \ \mathrm{any} \ \tau \in \R.
\]
For $\ell \in \Z$, we put $\rho_{\ell}(\tau) = \rho(\tau-\ell)$. 
For $\ell \in \Z$ and $u \in L^2(\R \times \R^2)$, we define the operator $R_{\ell}$ as
\[
\F_{t,x} R_{\ell} u(\tau,\xi)= \rho_{\ell}(\tau-|\xi|^2) \F_{t,x}u(\tau,\xi).
\]
Let $L \in 2^{\N_0}$ and
\[
J(L) = \{\ell \in \Z\, | \, \supp \rho_{\ell} \cap \supp\psi_L \not= \emptyset\}. 
\]
Then, the cardinality of $J(L)$ is comparable to $L$ and for $u \in L^2(\R \times \R^2)$ it follows that
\begin{equation}\label{decomposition:Q_L}
Q_L u = \sum_{\ell \in J(L)} R_{\ell} Q_Lu.
\end{equation}

For $i=1,2,3$, let $\ell_i \in \Z$ and define $w_{i,\ell_i} = R_{\ell_i}\mathcal{P}_{{\mathfrak{N}}_i} v_i$. 
We will prove the following estimate.
\begin{equation}\label{est:prop4.4-01}
\Bigl| \int w_{1,\ell_1} \overline{w}_{2,\ell_2} \overline{w}_{3,\ell_3} dt dx \Bigr| \lesssim M_{\min}^{\frac12 - \frac1p}{\mathfrak{N}}_{\max}^{-1+\frac{1}{p}} {\mathfrak{N}}_{\min}^{\frac1p} \prod_{i=1,2,3} \|w_{i,\ell_i}\|_{L_{t,x}^2},
\end{equation}
Here, the implicit constant does not depend on $\ell_1$, $\ell_2$, and $\ell_3$.
Let us see that this estimate implies \eqref{est:goal-prop4.3}. 
First we consider the case $N_{\min} \geq 2$. 
In this case, \eqref{equality:LP} implies $w_{i,\ell_i} = R_{\ell_i} v_i$. 
It follows from \eqref{decomposition:Q_L}, \eqref{est:prop4.4-01} and the Cauchy-Schwarz inequality that
\begin{align*}
\Bigl| \int v_1 \overline{v}_2 \overline{v}_3 dt dx \Bigr| 
& \leq 
\sum_{i=1}^3
\sum_{\ell_i \in J(L_i)} \Bigl| \int w_{1,\ell_1} \overline{w}_{2,\ell_2} \overline{w}_{3,\ell_3} dt dx \Bigr|\\
& \lesssim 
\sum_{i=1}^3
\sum_{\ell_i \in J(L_i)} M_{\min}^{\frac12 - \frac1p}N_{\max}^{-1+\frac{1}{p}} N_{\min}^{\frac1p}  
 \|w_{1,\ell_1}\|_{L_{t,x}^2} \|w_{2,\ell_2}\|_{L_{t,x}^2} \|w_{3,\ell_3}\|_{L_{t,x}^2}\\
& \lesssim (L_1L_2L_3)^{\frac12} M_{\min}^{\frac12 - \frac1p}N_{\max}^{-1+\frac{1}{p}} N_{\min}^{\frac1p}  
\|v_1\|_{L_{t,x}^2} \|v_2\|_{L_{t,x}^2} \|v_3\|_{L_{t,x}^2}.
\end{align*}
In the last estimate, we utilized $\# J(L_i) \sim L_i$. 
The case $N_{\min} =1$ is handled in a similar way. 
Here we consider the case $N_{\min}=N_3=1$. 
The other cases $N_{\min} = N_1=1$ and $N_{\min}=N_2 =1$ can be treated in the same way. 
Clearly, we may assume ${{N}}_{\max} \geq 2$. 
By applying \eqref{equality:LP} and \eqref{decomposition:Q_L} to $v_3$ as
\[
v_3 = \sum_{\ell_3 \in J(L_3)}\sum_{\mathfrak{N}_3 \leq 2}  R_{\ell_3}\mathcal{P}_{{\mathfrak{N}}_3} v_3
=\sum_{\ell_3 \in J(L_3)}\sum_{\mathfrak{N}_3 \leq 2} w_{3,\ell_3},
\]
we may utilize \eqref{est:prop4.4-01} and obtain \eqref{est:goal-prop4.3}. 

Let us consider \eqref{est:prop4.4-01}. By Plancherel's theorem, it suffices to prove
\begin{equation}\label{est:prop4.4-01.5}
\begin{split}
&|(\F_{t,x}w_{1,\ell_1}) * (\F_{t,x}\overline{w}_{2,\ell_2}) *(\F_{t,x}\overline{w}_{3,\ell_3}) (0)|\\
& \lesssim 
M_{\min}^{\frac12 - \frac1p}N_{\max}^{-1+\frac{1}{p}} N_{\min}^{\frac1p} \prod_{i=1,2,3}\|\F_{t,x}w_{i,\ell_i}\|_{L_{\tau,\xi}^2},
\end{split}
\end{equation}
where the implicit constant does not depend on $\ell_1$, $\ell_2$, and $\ell_3$. 
Put
\[
f_{1}= \F_{t,x}w_{1,\ell_1}, \quad f_{2} = \F_{t,x}\overline{w}_{2,\ell_2}, \quad f_{3} = \F_{t,x}\overline{w}_{3,\ell_3}.
\]
Then, we observe that
\begin{align}
\label{condition:supportf_1}
\supp f_{1}
& \subset  \{ (\tau,\xi) \in \R^3 \, |\, |\tau - |\xi|^2 -\ell_1| < 2, \ |\xi| \sim \mathfrak{N}_1 \},\\
\label{condition:supportf_2}
\supp f_{2} 
& \subset  \{ (\tau,\xi) \in \R^3 \, |\, |\tau + |\xi|^2 +\ell_2| < 2, \ |\xi| \sim \mathfrak{N}_2 \},\\
\label{condition:supportf_3}
\supp f_{3} 
& \subset  \{ (\tau,\xi) \in \R^3 \, |\, |\tau + |\xi|^2 + \ell_3| < 2, \ |\xi| \sim \mathfrak{N}_3 \}.
\end{align}
For $p \geq 2$, by using Bernstein's inequality and \eqref{est:FSE} with $\# K_i \sim M_{\min}$, we get $\|w_{i,\ell_i}\|_{L_{t,r}^2 L_{\theta}^p} \lesssim M_{\min}^{\frac12 - \frac1p} \|w_{i,\ell_i}\|_{L_{t,x}^2}$ for each $i=1,2,3$. 
This implies $\|f_{i}\|_{L_{\tau,r}^2 L_{\theta}^p} \lesssim M_{\min}^{\frac12 - \frac1p} \|f_{i}\|_{L_{\tau,\xi}^2}$ for each $i=1,2,3$. 
To avoid redundancy, let us assume ${\mathfrak{N}}_3 \lesssim {\mathfrak{N}}_1 \sim {\mathfrak{N}}_2$. 
Then, \eqref{est:prop4.4-01.5} is verified by showing
\begin{equation}\label{est:prop4.4-02}
|f_{1} * f_{2} *f_{3} (0)| \lesssim 
{\mathfrak{N}}_{1}^{-1+\frac{1}{p}} {\mathfrak{N}}_{3}^{\frac1p} \|f_{1}\|_{L_{\tau,r}^2 L_{\theta}^p}\|f_{2}\|_{L_{\tau,\xi}^2} \|f_{3}\|_{L_{\tau,\xi}^2}.
\end{equation}
Put $g_{i} (\tau,\xi)= f_{i}({\mathfrak{N}}_1^2 \tau, {\mathfrak{N}}_1 \xi)$. Then, \eqref{condition:supportf_1}, \eqref{condition:supportf_2}, \eqref{condition:supportf_3} mean that
\begin{align*}
\supp g_{1}
& \subset  \Bigl\{ (\tau,\xi) \in \R^3 \, \Big|\, \Bigl|\tau - |\xi|^2 -\frac{\ell_1}{{\mathfrak{N}}_1^2} \Bigr| < \frac{2}{{\mathfrak{N}}_1^2}, \ |\xi| \sim 1 \Bigr\},\\
\supp g_{2} 
& \subset  \Bigl\{ (\tau,\xi) \in \R^3 \, \Big|\, \Bigl|\tau + |\xi|^2 +\frac{\ell_2}{{\mathfrak{N}}_1^2}\Bigr| < \frac{2}{{\mathfrak{N}}_1^2}, \ |\xi| \sim 1 \Bigr\},\\
\supp g_{3} 
& \subset  \Bigl\{ (\tau,\xi) \in \R^3 \, \Big|\, \Bigl|\tau + |\xi|^2 +\frac{\ell_3}{{\mathfrak{N}}_1^2}\Bigr| < \frac{2}{{\mathfrak{N}}_1^2}, \ |\xi| \sim \frac{\mathfrak{N}_3}{{\mathfrak{N}}_1} \Bigr\},
\end{align*}
and \eqref{est:prop4.4-02} is rewritten as
\[
|g_{1} * g_{2} * g_{3} (0)| \lesssim {\mathfrak{N}}_{1}^{-3+\frac{1}{p}} {\mathfrak{N}}_{3}^{\frac1p} \|g_{1}\|_{L_{\tau,r}^2 L_{\theta}^p}\|g_{2}\|_{L_{\tau,\xi}^2} \|g_{3}\|_{L_{\tau,\xi}^2}.
\]
This follows from Theorem~\ref{theorem:trilinearest} with $\varepsilon = {\mathfrak{N}}_1^{-2}$ and $N_3 = {\mathfrak{N}}_{3}/ {\mathfrak{N}}_{1}$, since the functions
\[
\varphi_1(\xi)= |\xi|^2 + \frac{\ell_1}{{\mathfrak{N}}_1^2}, \quad 
\varphi_2(\xi)= - |\xi|^2 - \frac{\ell_2}{{\mathfrak{N}}_1^2} \quad
\varphi_3(\xi)= -|\xi|^2 - \frac{\ell_3}{{\mathfrak{N}}_1^2}
\]
clearly satisfy \textit{Assumption}~\ref{assumption:hypersurface}.
\end{proof}
Next, by using Theorems~\ref{thm:GNLWR3}, we prove the second tilinear estimate.
\begin{prop}\label{prop4.5}
For $i=1,2,3$, let $L_i$, $N_i \in 2^{\N_0}$, and $u_i \in L^2(\R \times \R^2)$. We define
\[
v_{i} = Q_{L_i} P_{N_i} u_i.
\]
Then, we have
\begin{equation}\label{goal:prop4.5}
\Bigl| \int v_1 \overline{v}_2 \overline{v}_3 dt dx \Bigr| \lesssim (L_1L_2L_3)^{\frac12} (N_{\min} N_{\max})^{-\frac12} \prod_{i=1,2,3} \|v_i\|_{L_{t,x}^2}.
\end{equation}
\end{prop}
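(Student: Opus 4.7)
The plan is to mirror the proof of Proposition~\ref{prop4.4} step by step, with Theorem~\ref{thm:GNLWR3} taking the place of Theorem~\ref{theorem:trilinearest}. First I would apply Plancherel to write $\int v_1\overline{v}_2\overline{v}_3\,dt\,dx=(\F_{t,x}v_1*\F_{t,x}\overline{v}_2*\F_{t,x}\overline{v}_3)(0,0)$, whose three factors are supported in $L_i$-thick slabs around the paraboloids $\tau=|\xi|^2$ and $\tau=-|\xi|^2$ with $|\xi|\sim N_i$. Next I would decompose the modulation by the smooth partition $Q_{L_i}=\sum_{\ell_i\in J(L_i)}R_{\ell_i}Q_{L_i}$ (as in the proof of Proposition~\ref{prop4.4}) and, when $N_i=1$, the frequency into $\mathcal{P}_{\mathfrak{N}_i}$ for $\mathfrak{N}_i\le 1$; setting $w_i=R_{\ell_i}\mathcal{P}_{\mathfrak{N}_i}v_i$, Cauchy-Schwarz over the $\ell_i$'s together with $\#J(L_i)\sim L_i$ produces the $(L_1L_2L_3)^{1/2}$ factor, and the proposition reduces to the unit-modulation estimate
\[
\Bigl|\int w_1\overline{w}_2\overline{w}_3\,dt\,dx\Bigr|\lesssim(\mathfrak{N}_{\min}\mathfrak{N}_{\max})^{-1/2}\prod_{i=1,2,3}\|w_i\|_{L^2_{t,x}}.
\]

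For this key estimate, by symmetry I would assume $\mathfrak{N}_3\le \mathfrak{N}_1\sim \mathfrak{N}_2$, then pass to the Fourier side and rescale $(\tau,\xi)\mapsto(\mathfrak{N}_1^2\tau,\mathfrak{N}_1\xi)$, so that the large paraboloids become unit-scale with thickness $\varepsilon=\mathfrak{N}_1^{-2}$ while the third surface has $|\xi_3|\sim m:=\mathfrak{N}_{\min}/\mathfrak{N}_{\max}$. The degenerate regime $m\lesssim\varepsilon$ is handled by the direct support-volume argument used at the start of the proof of Theorem~\ref{theorem:trilinearest}. In the main regime $m\gg\varepsilon$ I would apply Cauchy-Schwarz to separate one factor $g_3$ and then bound $\|g_1*g_2\|_{L^2(S_3(\varepsilon))}$ via~\eqref{est:ThickenedLW}. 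The modulation constraint $|\tau_1+\tau_2+\tau_3|\lesssim\varepsilon$ combined with $\xi_1+\xi_2+\xi_3=0$ forces $|\xi_2\cdot\xi_3|\lesssim\varepsilon$, whence $\xi_2\perp\xi_3$ to high precision and $|\xi_2\wedge\xi_3|\sim|\xi_2||\xi_3|\sim m$ on the convolution support.

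A direct determinant computation with the unit normals $(1,\mp 2\xi_i)/\sqrt{1+4|\xi_i|^2}$ (whose denominators are now $\sim 1$ in the rescaled setting) yields $|\det N|\sim|\xi_2\wedge\xi_3|\sim m$, so Assumption~\ref{AssumptionSurfaces} holds with $A\sim m^{-1}=\mathfrak{N}_{\max}/\mathfrak{N}_{\min}$. Tracking the scaling Jacobians -- an $\mathfrak{N}_1^8$ factor from the triple convolution at the origin and an $\mathfrak{N}_1^{-6}$ factor from the three $L^2$-rescalings -- together with the $\varepsilon^{3/2}A^{1/2}$ bound from~\eqref{est:ThickenedLW}, collapses exactly to $(\mathfrak{N}_{\min}\mathfrak{N}_{\max})^{-1/2}\prod\|f_i\|_{L^2}$, which is the desired unit-modulation estimate. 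The hard part will be to confirm that the modulation constraint is genuinely active so that the near-orthogonality $\xi_2\perp\xi_3$, and hence the needed lower bound on $|\det N|$, really holds uniformly on the relevant convolution support; any thin exceptional set where this degenerates should be absorbed by an additional caps decomposition in the spirit of the proof of Theorem~\ref{theorem:trilinearest}.
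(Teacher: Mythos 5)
Your plan shares the paper's skeleton---modulation partition $R_\ell$, dyadic rescaling to unit scale, then the thickened transversal convolution estimate~\eqref{est:ThickenedLW}---but it misses the point at which Assumption~\ref{AssumptionSurfaces} must be verified. That assumption requires $|\det N(\sigma_1,\sigma_2,\sigma_3)|\gtrsim A^{-1}$ for \emph{every} triple in $S_1\times S_2\times S_3$, not merely on the zero-sum set $\{\sigma_1+\sigma_2+\sigma_3=0\}$ where the triple convolution actually lives. Your deduction that $|\xi_2\wedge\xi_3|\sim m$ is correct \emph{on the convolution support}, but it says nothing about the pair $(\xi_1,\xi_2)$ ranging over the full annulus $\{|\xi|\sim 1\}$, where parallel directions make $\det N$ vanish. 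The caps decomposition you mention in passing is therefore not a repair of a thin exceptional set: it is the step that localizes $f_1,f_2$ to angular caps $\mathfrak{D}_{j_1},\mathfrak{D}_{j_2}$ of width and separation $\sim N_{\min}/N_{\max}$ (the index pairs $\mathcal{I}$), and only after this localization do the hypotheses of Theorem~\ref{thm:GNLWR3} hold with $A\sim N_{\max}/N_{\min}$. The paper moreover needs the prior restriction $L_{\max}\ll N_{\min}N_{\max}$ to conclude that the surviving cap pairs lie in $\mathcal{I}$, since pairs in $\widetilde{\mathcal{I}}$ force $L_{\max}\gtrsim N_{\min}N_{\max}$.

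Two further reductions are also missing. First, for $N_{\min}=1$, splitting $v_3=\sum_{\mathfrak{N}\le 2}\mathcal{P}_{\mathfrak{N}}v_3$ as in Proposition~\ref{prop4.4} fails here: the target power $N_{\min}^{-1/2}$ is \emph{negative}, so the dyadic sum $\sum_{\mathfrak{N}\le 2}\mathfrak{N}^{-1/2}\|\mathcal{P}_{\mathfrak{N}}v_3\|_{L^2}$ diverges; the paper instead invokes the classical $L^2$-bilinear estimate from \cite{CDKS01} and \cite{H14}. Second, when $L_{\max}\gtrsim N_{\min}N_{\max}$ the $R_\ell$-decomposition produces triples with $|\ell_1-\ell_2-\ell_3|\sim N_{\min}N_{\max}$; on the corresponding convolution supports $\xi_2\cdot\xi_3$ is then near its maximum, so $\xi_2$ and $\xi_3$ are nearly \emph{parallel} rather than perpendicular, and the near-orthogonality you rely on collapses. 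The paper disposes of this regime beforehand by the Strichartz bound of Lemma~\ref{Bo_Stri}. With these three preliminary reductions in place, your scaling ledger ($\varepsilon^{3/2}A^{1/2}$ with $\varepsilon=\mathfrak{N}_1^{-2}$, $A\sim N_1/N_3$, and the Jacobian factors $\mathfrak{N}_1^{-8}$ and $\mathfrak{N}_1^{-6}$) does indeed produce $(N_{\min}N_{\max})^{-1/2}$.
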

\begin{proof}
When $N_{\min}=1$, the claim follows from the following $L^2$-bilinear estimate:
\begin{align*}
& \|Q_{L_1}P_{N_1}u_1 Q_{L_2}P_{N_2} u_2 \|_{L_{t,x}^2} + \|Q_{L_1}P_{N_1}u_1 Q_{L_2}P_{N_2} \overline{u}_2 \|_{L_{t,x}^2}\\
& \lesssim (L_1 L_2)^{\frac12} \biggl( \frac{\min(N_1,N_2)}{\max(N_1,N_2)} \biggr)^{\frac12} \| Q_{L_1}P_{N_1}u_1\|_{L_{t,x}^2} \| Q_{L_2}P_{N_2} u_2\|_{L_{t,x}^2}.
\end{align*}
For the proof of this estimate, see e.g. Lemma 1 in \cite{CDKS01} and Lemma~3.1 in \cite{H14}.

Hereafter, we assume $N_{\min} \geq 2$. Suppose that $L_1\gtrsim N_{\min} N_{\max}$. By using Corollary~\ref{Bo_Stri}
\[
\|Q_{L}P_{N} u\|_{L_{t,x}^4} \lesssim L^{\frac12} \|Q_{L}P_{N} u\|_{L_{t,x}^2},
\]
we obtain
\begin{align*}
\Bigl| \int v_1 \overline{v}_2 \overline{v}_3 dt dx \Bigr| & 
\lesssim \|v_1\|_{L_{t,x}^2} \|v_2\|_{L_{t,x}^4}\| v_3 \|_{L_{t,x}^4}\\
&\lesssim (L_1L_2L_3)^{\frac12} (N_{\min} N_{\max})^{-\frac12} \prod_{i=1,2,3} \|v_i\|_{L_{t,x}^2}.
\end{align*}
The cases $L_2 \gtrsim N_{\min} N_{\max}$ and $L_3 \gtrsim N_{\min} N_{\max}$ can be treated in a similar way. Hence, we assume $L_{\max} = \max(L_1,L_2,L_3) \ll N_{\min} N_{\max}$. 

Let $J \subset \N$ and $\{\omega_j\}_{j \in J}$ be the collection of $\omega_j \in \Sp^{1}$ such that 
\[
2^{-10} \frac{N_{\min}}{N_{\max}} \leq \inf_{\substack{j_1,j_2 \in J\\
j_1 \not= j_2}} | \omega_{j_1} - \omega_{j_2} | \leq 2^{-8} \frac{N_{\min}}{N_{\max}}. 
\]
Hence, the cardinality of $J$ is comparable to $N_{\max}/N_{\min}$. 
We define
\begin{align*}
& \mathfrak{D}_{j} = \Bigl\{ r \omega \in \R^2 \, \Bigl| \, r>0, \, \omega \in \Sp^{1} \ \mathrm{such \ that} \  |\omega - \omega_j| \leq 2^{-10} \frac{N_{\min}}{N_{\max}} \Bigr\},\\
& \mathcal{I} = \Bigl\{(j_1,j_2) \in J \times J \, \Bigl| \, 2^{-9} \frac{N_{\min}}{N_{\max}} \leq \min_{\pm}|\omega_{j_1} \pm \omega_{j_2}| \leq 2^5 \frac{N_{\min}}{N_{\max}} \Bigr\},\\
& \widetilde{\mathcal{I}} = \Bigl\{(j_1,j_2) \in J \times J \, \Bigl| \,  \min_{\pm}|\omega_{j_1} \pm \omega_{j_2}| \leq 2^{-9} \frac{N_{\min}}{N_{\max}} \Bigr\}.
\end{align*}
Note that $\bigcup_{j \in J} \mathfrak{D}_{j} = \R^2 \setminus \{0\}$. 
By Plancherel's theorem, we can write
\[
\Bigl| \int v_1 \overline{v}_2 \overline{v}_3 dt dx \Bigr| = \Bigl| \int_* \F_{t,x}{v}_1(\tau_1,\xi_1) \overline{\F_{t,x}{v}_2}(\tau_2,\xi_2) \overline{\F_{t,x}{v}_3}(\tau_3,\xi_3) d\sigma \Bigr|,
\] 
where $*$ denotes $(\tau_3,\xi_3)=(\tau_1-\tau_2, \xi_1-\xi_2)$ and $d \sigma = d\tau_1 d\xi_1 d\tau_2 d\xi_2$. 
Let $1 \leq \ell_{\min}, \ell_{\mathrm{med}}, \ell_{\max} \leq 3$ satisfy $\{\ell_{\min}, \ell_{\mathrm{med}}, \ell_{\max}\} = \{1,2,3\}$ and $N_{\ell_{\min}} \leq N_{\ell_{\mathrm{med}}} \leq N_{\ell_{\max}}$. 
In the above, if $(\xi_{\ell_{\mathrm{med}}}, \xi_{\ell_{\max}}) \in \mathfrak{D}_{j_1} \times \mathfrak{D}_{j_2}$ with $(j_1,j_2) \in \widetilde{\mathcal{I}}$ then the following holds:
\begin{equation}\label{est:prop4.5-modulation}
L_{\max} \gtrsim N_{\min}  N_{\max},
\end{equation}
which contradicts the assumption $L_{\max} \ll N_{\min}N_{\max} $. 
The inequality \eqref{est:prop4.5-modulation} can be seen by a simple computation. For instance, if $2 \leq N_3 \leq N_2 \leq N_1$, since we may assume $N_1 \sim N_2$, we have
\begin{align*}
L_{\max} & \gtrsim \bigl| (\tau_1-|\xi_1|^2) - (\tau_2 - |\xi_2|^2) -(\tau_3 - |\xi_3|^2) \bigr|\\
& =||\xi_1|^2 - |\xi_2|^2 - |\xi_3|^2| = 2|\xi_2 \cdot (\xi_1-\xi_2)|\\
& = 2  |\xi_2| |\xi_1-\xi_2| \bigl| \cos \angle(\xi_2, \xi_1 -\xi_2) \bigr|  \gtrsim N_1 N_3.
\end{align*}
The last inequality follows from $(\xi_1, \xi_2) \in \mathfrak{D}_{j_1} \times \mathfrak{D}_{j_2}$ with $(j_1,j_2) \in \widetilde{\mathcal{I}}$. 
The other cases can be treated in a similar way. Consequently, we can assume $(\xi_{\ell_{\mathrm{med}}},\xi_{\ell_{\max}}) \in \bigcup_{(j_1,j_2)\in \mathcal{I}}\mathfrak{D}_{j_1} \times \mathfrak{D}_{j_2}$. 

To avoid redundancy, we assume $N_3 \leq N_2 \leq N_1$ here. 
Note that $j_1$ and $j_2$ are almost one-to-one correspondence. 
We invoke the same reduction from~\eqref{est:goal-prop4.3} to~\eqref{est:prop4.4-02} in the proof of Proposition~\ref{prop4.4}. 
Let $2 \leq N_3 \leq N_2 \leq  N_1$, $(j_1,j_2) \in \mathcal{I}$, $\ell_1$, $\ell_2$, $\ell_3 \in \Z$ and functions $f_{1}$, $f_{2}$, $f_{3} \in L^2(\R^3)$ satisfy
\begin{align*}
\supp f_{1}
& \subset  \{ (\tau,\xi) \in \R^3 \, |\, |\tau - |\xi|^2 -\ell_1| < 2, \ |\xi| \sim N_1 \}\cap \mathfrak{D}_{j_1},\\
\supp f_{2} 
& \subset  \{ (\tau,\xi) \in \R^3 \, |\, |\tau + |\xi|^2 +\ell_2| < 2, \ |\xi| \sim N_2 \}\cap \mathfrak{D}_{j_2},\\
\supp f_{3} 
& \subset  \{ (\tau,\xi) \in \R^3 \, |\, |\tau - |\xi|^2 - \ell_3| < 2, \ |\xi| \sim N_3 \}. 
\end{align*}
The estimate \eqref{goal:prop4.5} follows from
\begin{equation}\label{est:prop4.5-01}
\begin{split}
&\Bigl| \int   \bigl(f_{1} * f_{2}\bigr) f_{3} d\tau d\xi \Bigr|\\
& \lesssim (N_1N_3)^{-\frac12}\|f_{1}\|_{L_{\tau,\xi}^2} \|f_{2}\|_{L_{\tau,\xi}^2} \|f_{3}\|_{L_{\tau,\xi}^2}.
\end{split}
\end{equation}
Here, the implicit function does not depend on $\ell_1$, $\ell_2$, and $\ell_3$. 
As in the proof of Proposition~\ref{prop4.4}, we put
\begin{align*}
g_{1} (\tau,\xi)& = f_{1}(N_1^2 \tau, N_1 \xi), \\
g_{2} (\tau,\xi)& = f_{2}(N_1^2 \tau, N_1 \xi), \\
g_{3} (\tau,\xi)& = f_{3}(N_1^2 \tau, N_1 \xi),
\end{align*}
and rewrite \eqref{est:prop4.5-01} as
\begin{equation}\label{est:prop4.5-02}
\begin{split}
&\Bigl| \int   \bigl(g_{1} * g_{2}\bigr) g_{3} d\tau d\xi \Bigr|\\
& \lesssim N_{1}^{-\frac{5}{2}}N_3^{-\frac12} \|g_{1}\|_{L_{\tau,\xi}^2}\|g_{2}\|_{L_{\tau,\xi}^2} \|g_{3}\|_{L_{\tau,\xi}^2}.
\end{split}
\end{equation}
Here it holds that 
\begin{align*}
\supp g_{1}
& \subset  \Bigl\{ (\tau,\xi) \in \R^3 \, \Big|\, \Bigl|\tau - |\xi|^2 -\frac{\ell_1}{N_1^2} \Bigr| < \frac{2}{N_1^2}, \ |\xi| \sim 1 \Bigr\}\cap \mathfrak{D}_{j_1},\\
\supp g_{2} 
& \subset  \Bigl\{ (\tau,\xi) \in \R^3 \, \Big|\, \Bigl|\tau + |\xi|^2 +\frac{\ell_2}{N_1^2} \Bigr| < \frac{2}{N_1^2}, \ |\xi| \sim 1 \Bigr\} \cap \mathfrak{D}_{j_2},\\
\supp g_{3} 
& \subset  \Bigl\{ (\tau,\xi) \in \R^3 \, \Big|\, \Bigl|\tau - |\xi|^2 -\frac{\ell_3}{N_1^2} \Bigr| < \frac{2}{N_1^2}, \ |\xi| \sim \frac{N_3}{N_1} \Bigr\}.
\end{align*}
Further, after a harmless decomposition, we may assume that there exists $\xi' \in \R^2$ such that $|\xi'| \sim N_3/N_1$ and
\[
\supp g_{3} \subset \Bigl\{ (\tau,\xi) \in \R^3 \, \Big|\, \Bigl|\tau - |\xi|^2 -\frac{\ell_3}{N_1^2} \Bigr| < \frac{2}{N_1^2}, \ |\xi-\xi'| \leq 2^{-100} \frac{N_3}{N_1} \Bigr\}.
\]
To prove \eqref{est:prop4.5-02}, we utilize \eqref{est:ThickenedLW} with $\varepsilon = N_1^{-2}$ and $A \sim N_1/N_3$. 

Let us set 
\[
\varphi_1(\xi)= |\xi|^2 + \frac{\ell_1}{N_1^2}, \quad 
\varphi_2(\xi)= - |\xi|^2 - \frac{\ell_2}{N_1^2}, \quad
\varphi_3(\xi)= |\xi|^2 + \frac{\ell_3}{N_1^2},
\] 
and define 
\begin{align*}
&S_1 = \{(\varphi_1(\xi),\xi) \in \R^3 \, | \, |\xi| \sim 1 , \ \xi \in \mathfrak{D}_{j_1}\},\\
&S_2 = \{(\varphi_2(\xi),\xi) \in \R^3 \, | \, |\xi| \sim 1 , \ \xi \in \mathfrak{D}_{j_2}\},\\
& S_3 = \Bigl\{(\varphi_3(\xi),\xi) \in \R^3 \, | \,  |\xi-\xi'| \leq 2^{-100} \frac{N_3}{N_1} \Bigr\}.
\end{align*}
Clearly, because $\varphi_i \in C^{\infty}(\R^2)$, $S_1$, $S_2$, $S_3$ are $C^{1,1}$ hypersurfaces. 
We check that the transversality condition \eqref{eq:TransversalityAssumption} in \textit{Assumption}~\ref{AssumptionSurfaces} holds with $A \sim N_1/N_3$. 
We can describe the unit normal of $S_{i}$ on $(\varphi_i(\xi),\xi) = (\varphi_i(\xi), \xi^{(1)}, \xi^{(2)})$ which we write $\mathfrak{n}_{i}(\varphi_i(\xi),\xi)$ as
\begin{align*}
&\mathfrak{n}_{1}(\varphi_1(\xi),\xi) = \frac{1}{\sqrt{1+4 |\xi|^2}}(- 1,  2 \xi^{(1)}, 2 \xi^{(2)}),\\
&\mathfrak{n}_{2}(\varphi_2(\xi),\xi) = \frac{1}{\sqrt{1+4 |\xi|^2}}( 1,  2 \xi^{(1)}, 2 \xi^{(2)}),\\
&\mathfrak{n}_{3}(\varphi_3(\xi),\xi) = \frac{1}{\sqrt{1+4 |\xi|^2}}(- 1,  2 \xi^{(1)}, 2 \xi^{(2)}).
\end{align*}
Let $|\xi_1| \sim |\xi_2| \sim 1$, $(\xi_1, \xi_2) \in \mathfrak{D}_{j_1} \times \mathfrak{D}_{j_2}$, $|\xi_1+\xi_2-\xi'| \leq 2^{-100}N_3/N_1$, and $|\xi_3-\xi'| \leq 2^{-100}N_3/N_1$. Then we compute that
\begin{align*}
&\bigl|\mathrm{det}\bigl( \mathfrak{n}_{1}(\varphi_1(\xi_1),\xi_1), \mathfrak{n}_{2}(\varphi_2(\xi_2),\xi_2),\mathfrak{n}_{3}(\varphi_3(\xi_3),\xi_3)\bigr) \bigr|\\
&\gtrsim \left|\textnormal{det}
\begin{pmatrix}
- 1 &  1 &  - 1 \\
\xi_1^{(1)}  & \xi_2^{(1)} & \xi_3^{(1)} \\
\xi_1^{(2)}  & \xi_2^{(2)} & \xi_3^{(2)}
\end{pmatrix} \right|  \\
&\gtrsim\left|\textnormal{det}
\begin{pmatrix}
- 1 &  1 &  - 1 \\
\xi_1^{(1)}  & \xi_2^{(1)} & \xi_1^{(1)} +\xi_2^{(1)} \\
\xi_1^{(2)}  & \xi_2^{(2)} & \xi_1^{(2)} +\xi_2^{(2)}
\end{pmatrix} \right| - 2^{-20}\frac{N_3}{N_1}\\
& \geq  |\xi_1^{(1)} \xi_2^{(2)} - \xi_1^{(2)} \xi_2^{(1)}|- 2^{-20}\frac{N_3}{N_1}\\
& = |\xi_1| |\xi_2| | \sin \angle (\xi_1,\xi_2)|- 2^{-20}\frac{N_3}{N_1} \gtrsim \frac{N_3}{N_1}.
\end{align*}
Thus, the transversality condition \ref{eq:TransversalityAssumption} in \textit{Assumption}~\ref{AssumptionSurfaces} holds with $A \sim N_1/N_3$, and therefore \eqref{est:prop4.5-02} follows from \eqref{est:ThickenedLW}.
\end{proof}
We now prove Proposition~\ref{prop2.2}.
\begin{proof}[Proof of Proposition~\ref{prop2.2}]
Let $N_1$, $N_2$, $N_3 \in 2^{\N_0}$. Our goal is to prove that there exists small $\kappa = \kappa(s) >0$ such that
\begin{equation}\label{est:prop4.1-01}
\begin{split}
\bigl\|P_{N_3}\F_{t,x}^{-1} \bigl( \langle \tau - |\xi|^2 \rangle^{-1} \F_{t,x}({P_{N_1}u}_1 {\overline{P_{N_2}u_2}})\bigr) \bigr\|_{Z^{s,\sigma}_{\kappa}}& \\
 \lesssim 
N_{\min}^{-\kappa}\|P_{N_1}u_1\|_{Z^{s,\sigma}_{\kappa}} \|P_{N_2}u_2\|_{Z^{s,\sigma}_{\kappa}}&.
\end{split}
\end{equation}
For some simple cases, we show the following estimate instead of \eqref{est:prop4.1-01}. 
\begin{equation}\label{est:prop4.1-02}
\|P_{N_3}({P_{N_1}u}_1{\overline{P_{N_2}u_2}})\bigr\|_{X^{s,\sigma; -\frac12 + \kappa}} \lesssim 
N_{\min}^{-\kappa}\|P_{N_1}u_1\|_{X^{s,\sigma;\frac12-\kappa}} \|P_{N_2}u_2\|_{X^{s,\sigma;\frac12-\kappa}}.
\end{equation}
Since $\|u\|_{Z^{s,\sigma}_{\kappa}} \leq \|u\|_{X^{s, \sigma;\frac12 + \kappa}}$ and $\|u\|_{X^{s, \sigma;\frac12 - \kappa}} \leq \|u\|_{Z^{s,\sigma}_{\kappa}}$, \eqref{est:prop4.1-02} implies \eqref{est:prop4.1-01}. 
We will show \eqref{est:prop4.1-02} under the condition $\sigma =0$. 
The case $\sigma >0$ is handled in the same way. By duality, \eqref{est:prop4.1-02} with $\sigma=0$ is given by
\begin{equation}\label{est:prop4.1-03}
\Bigl| \int v_1 \overline{v}_2 \overline{v}_3 dt dx \Bigr| \lesssim (L_1L_2L_3)^{\frac12-2 \kappa} N_{\min}^{-\kappa} N_1^{s} N_2^s N_3^{-s} \prod_{i=1,2,3} \|v_i\|_{L_{t,x}^2},
\end{equation}
where $v_{i} = Q_{L_i} P_{N_i} u_i$ $(i=1,2,3)$. First, suppose that $L_{\max} \gtrsim N_{\max}^2$. If $L_1 =L_{\max}$ then by using Corollary~\ref{Bo_Stri}, we have
\begin{align*}
\Bigl| \int v_1 \overline{v}_2 \overline{v}_3 dt dx \Bigr| & \leq
\|v_1\|_{L_{t,x}^2} \|v_2\|_{L_{t,x}^4} \|v_3\|_{L_{t,x}^4}\\
& \lesssim N_{\max}^{-1 + 12 \kappa} L_1^{\frac12 - 6 \kappa} (L_2 L_3)^{\frac12}\prod_{i=1,2,3} \|v_i\|_{L_{t,x}^2}\\
& \leq N_{\max}^{-1 + 12 \kappa} (L_1L_2 L_3)^{\frac12- 2 \kappa}\prod_{i=1,2,3} \|v_i\|_{L_{t,x}^2}.
\end{align*}
Since $s > -1/2$ and $\kappa$ is sufficiently small, this gives \eqref{est:prop4.1-03}. 
The other cases $L_{2} = L_{\max} \gtrsim N_{\max}^2$ and $L_{3} = L_{\max} \gtrsim N_{\max}^2$ can be handled in the same way. 
Next, we assume $L_{\max} \ll N_{\max}^2$ and $N_3 \sim N_{\max}$. Then, we use Proposition~\ref{prop4.5} and obtain
\begin{align*}
\Bigl| \int v_1 \overline{v}_2 \overline{v}_3 dt dx \Bigr| & \lesssim (L_1L_2 L_3)^{\frac12} (N_{\min} N_{\max})^{-\frac12} \prod_{i=1,2,3} \|v_i\|_{L_{t,x}^2}\\
& \lesssim (L_1L_2 L_3)^{\frac12-2 \kappa}N_{\min}^{-\frac12} N_{\max}^{-\frac12 + 12 \kappa}\prod_{i=1,2,3} \|v_i\|_{L_{t,x}^2},
\end{align*}
which gives \eqref{est:prop4.1-03}.

Now we consider \eqref{est:prop4.1-01}. From the above observation, we can assume $N_3 \ll N_1 \sim N_2$ and replace $u_1$, $u_2$ by $Q_{L_1 \ll N_1^2}u_1$, $Q_{L_2 \ll N_1^2} u_2$, respectively. Here let us assume $s<-1/4$. Later, we will give a small comment on the case $s=-1/4$ and $\sigma =0$ since a slight modification is required. Recall that, since \eqref{est:Znorm}, it holds that
\[
\|Q_{L \ll N^2} P_{N} u\|_{Z^{s,\sigma}_{\kappa}} \sim 
\|Q_{L \ll N^2} P_{N} u\|_{X^{s, \sigma;\frac12 + \kappa}}.
\]
Thus, letting $v_{j} = Q_{L_j}H_{M_j} P_{N_j} u_j$ with the condition $L_j \ll N_j^2$ $(j=1,2)$, we will prove
\begin{equation}\label{est:prop4.1-04}
\begin{split}
& \bigl\|H_{M_3} P_{N_3}\F_{t,x}^{-1} \bigl(\langle \tau - |\xi|^2 \rangle^{-1} \F_{t,x}({v}_1  {\overline{v}_2})\bigr)\bigr\|_{Z_{\kappa}^{0,0}}\\
& \lesssim  (L_1L_2)^{\frac12} \min\Bigl(\Bigl(\frac{M_{\min}}{N_1}\Bigr)^{\delta}, \Bigl( \frac{N_1}{M_{\min}}\Bigr)^{\delta} \Bigr)M_{\min}^{- 2 s -\frac12}N_1^{2 s} N_3^{-s -\kappa}
\|v_1\|_{L_{t,x}^2}\|v_2\|_{L_{t,x}^2}
\end{split}
\end{equation}
for some $\delta=\delta(s)>0$. Clearly \eqref{est:prop4.1-01} is implied by \eqref{est:prop4.1-04}. 
Recall that $Z^{s,\sigma}_{\kappa}$ norm is described as \eqref{est:Znorm}. 
We decompose $v_1 \overline{v}_2$ as $v_1 \overline{v}_2 = Q_{L_3 \ll N_3^2} (v_1 \overline{v}_2)+ Q_{L_3 \gtrsim N_3^2} (v_1 \overline{v}_2)$. 
The necessary estimate for $Q_{L_3 \ll N_3^2} (v_1 \overline{v}_2)$ is the following:
\begin{equation}\label{est:prop4.1-04.1}
\begin{split}
& \|Q_{L_3 \ll N_3^2} H_{M_3}P_{N_3}(v_1 \overline{v}_2)\|_{X^{0, 0;-\frac12+\kappa}}\\
& \lesssim  (L_1L_2)^{\frac12} \min\Bigl(\Bigl(\frac{M_{\min}}{N_1}\Bigr)^{\delta}, \Bigl( \frac{N_1}{M_{\min}}\Bigr)^{\delta} \Bigr)M_{\min}^{- 2 s -\frac12}N_1^{2 s} N_3^{-s -\kappa}
\|v_1\|_{L_{t,x}^2}\|v_2\|_{L_{t,x}^2}.
\end{split}
\end{equation}
Notice that
\begin{equation}
\label{est:prop4.1-04.5}
\|Q_{L_3 \ll N_3^2} H_{M_3}P_{N_3}(v_1 \overline{v}_2)\|_{X^{0, 0;-\frac12+\kappa}} 
\leq N_3^{4 \kappa} \|Q_{L_3 \ll N_3^2} H_{M_3}P_{N_3}(v_1 \overline{v}_2)\|_{X^{0, 0;-\frac12-\kappa}}.
\end{equation}
Therefore, to show \eqref{est:prop4.1-04.1}, it is enough to prove the following two estimates:
\begin{align}
\label{est:prop4.1-04.6}
& \|Q_{L_3 \ll N_3^2} H_{M_3}P_{N_3}(v_1 \overline{v}_2)\|_{X^{0, 0;-\frac12-\kappa}} \lesssim  (L_1L_2)^{\frac12}(N_1 N_3)^{-\frac12}
\|v_1\|_{L_{t,x}^2}\|v_2\|_{L_{t,x}^2},\\
\label{est:prop4.1-04.7}
& \|Q_{L_3 \ll N_3^2} H_{M_3}P_{N_3}(v_1 \overline{v}_2)\|_{X^{0, 0;-\frac12-\kappa}} \lesssim  (L_1L_2)^{\frac12}M_{\min}^{\frac12-\frac1p}N_1^{-1+\frac1p} N_3^{\frac1p}
\|v_1\|_{L_{t,x}^2}\|v_2\|_{L_{t,x}^2},
\end{align}
where $2 \leq p <\infty$ in the second estimate. 
Indeed, since $-1/2 < s < -1/4$ and $\delta$, $\kappa >0$ are sufficiently small, \eqref{est:prop4.1-04.5}, \eqref{est:prop4.1-04.6} establish \eqref{est:prop4.1-04.1} in the case $N_1 \leq M_{\min}$. 
On the other hand, in the case $N_1 \geq M_{\min}$, by taking $p$ as $0 < 1/p < \min(1 + 2 s -\delta, -s - 5 \kappa)$, the two estimates \eqref{est:prop4.1-04.5}, \eqref{est:prop4.1-04.7} yield \eqref{est:prop4.1-04.1}. 
Note that, by duality, \eqref{est:prop4.1-04.6} and \eqref{est:prop4.1-04.7} follow from Propositions~\ref{prop4.5} and \ref{prop4.4}, respectively, i.e.
\begin{align}\label{est:prop4.1-05}
& \Bigl| \int v_1 \overline{v}_2 \overline{v}_3 dt dx \Bigr| \lesssim (L_1L_2L_3)^{\frac12} (N_1 N_3)^{-\frac12} \prod_{i=1,2,3} \|v_i\|_{L_{t,x}^2},\\
\label{est:prop4.1-06}
&\Bigl| \int v_1 \overline{v}_2 \overline{v}_3 dt dx \Bigr| \lesssim (L_1L_2L_3)^{\frac12} M_{\min}^{\frac12 - \frac1p}N_{1}^{-1+\frac{1}{p}} N_{3}^{\frac1p} \prod_{i=1,2,3} \|v_i\|_{L_{t,x}^2},
\end{align}
where $v_3 = Q_{L_3}H_{M_3}P_{N_3} u_3$. Thus, the proof of \eqref{est:prop4.1-04.1} is completed.

To handle $Q_{L_3 \gtrsim N_3^2} (v_1 \overline{v}_2)$, because of \eqref{est:Znorm}, it suffices to prove
\begin{align*}
& \|Q_{L_3 \gtrsim N_3^2} H_{M_3} P_{N_3}(v_1  \overline{v}_2)  \|_{X^{4 \kappa, 0;-\frac12 - \kappa}}\\
& +\|\F_{t,x}^{-1}\bigl( \langle \tau - |\xi|^2 \rangle^{-1}\F_{t,x}(Q_{L_3 \gtrsim N_3^2} H_{M_3}P_{N_3}(v_1 \overline{v}_2))\bigr)\|_{Y^{0,0}}\\
& \lesssim (L_1L_2)^{\frac12} \min\Bigl(\Bigl(\frac{M_{\min}}{N_1}\Bigr)^{\delta}, \Bigl( \frac{N_1}{M_{\min}}\Bigr)^{\delta} \Bigr)M_{\min}^{- 2 s -\frac12}N_1^{2 s} N_3^{-s -\kappa}
\|v_1\|_{L_{t,x}^2}\|v_2\|_{L_{t,x}^2}.
\end{align*}
Thanks to $-\kappa$, in the same way as above, the first term can be dealt with by \eqref{est:prop4.1-05} and \eqref{est:prop4.1-06}. 
We estimate the latter term. 
We deduce from the dual estimate and the above observation that it suffices to show that there exists $\delta'=\delta'(s)>0$ such that
\begin{align}\label{est:prop4.1-07}
&\Bigl| \int v_1 \overline{v}_2 \overline{v}_3 dt dx \Bigr|\lesssim (L_1L_2)^{\frac12} L_3^{1-\delta'}N_1^{-\frac12} \| v_1\|_{L_{t,x}^2} \|v_2\|_{L_{t,x}^2} \|\F_{t,x}{v}_3\|_{L_{\xi}^2 L_{\tau}^{\infty}},\\
\label{est:prop4.1-08}
\begin{split}
&\Bigl| \int v_1 \overline{v}_2 \overline{v}_3 dt dx \Bigr| \\ 
&\lesssim (L_1L_2)^{\frac12} L_3^{1-\delta'}
M_{\min}^{\frac12 - \frac1p}N_{1}^{-1+\frac{1}{p}} N_{3}^{\frac1p} \| v_1\|_{L_{t,x}^2} \|v_2\|_{L_{t,x}^2} \|\F_{t,x}{v}_3\|_{L_{\xi}^2 L_{\tau}^{\infty}},
\end{split}
\end{align}
where $v_3 = Q_{L_3}P_{N_3} u_3$ and $2 \leq p <\infty$ in the second estimate. 
To see \eqref{est:prop4.1-07} and \eqref{est:prop4.1-08}, 
by using \eqref{est:prop4.1-05}, \eqref{est:prop4.1-06}, and the Cauchy-Schwarz inequality, we get
\begin{align}\label{est:prop4.1-09}
& \Bigl| \int v_1 \overline{v}_2 \overline{v}_3 dt dx \Bigr| \lesssim (L_1L_2)^{\frac12} L_3 (N_1 N_3)^{-\frac12}\| v_1\|_{L_{t,x}^2} \|v_2\|_{L_{t,x}^2} \|\F_{t,x}{v}_3\|_{L_{\xi}^2 L_{\tau}^{\infty}},\\
\label{est:prop4.1-10}
& \Bigl| \int v_1 \overline{v}_2 \overline{v}_3 dt dx \Bigr| \lesssim (L_1L_2)^{\frac12} L_3 M_{\min}^{\frac12 - \frac1p}N_{1}^{-1+\frac{1}{p}} N_{3}^{\frac1p}\| v_1\|_{L_{t,x}^2} \|v_2\|_{L_{t,x}^2} \|\F_{t,x}{v}_3\|_{L_{\xi}^2 L_{\tau}^{\infty}},
\end{align}
for any $2 \leq p < \infty$. Here we used the support condition $\supp \F_{t,x}{v}_3 \subset \{|\tau-|\xi|^2|\leq L_3 \}$. 
We easily check that if the estimate
\begin{equation}\label{est:prop4.1-11}
\Bigl| \int v_1 \overline{v}_2 \overline{v}_3 dt dx \Bigr| \lesssim  (L_1L_2L_3)^{\frac12} N_1^{-\frac12} N_3^{\frac12} \| v_1\|_{L_{t,x}^2} \|v_2\|_{L_{t,x}^2} \|\F_{t,x}{v}_3\|_{L_{\xi}^2 L_{\tau}^{\infty}}
\end{equation}
holds, then by combining with \eqref{est:prop4.1-09} and \eqref{est:prop4.1-10}, we then have the desired bounds \eqref{est:prop4.1-07} and \eqref{est:prop4.1-08}. 

Let us consider \eqref{est:prop4.1-11}. A simple computation yields
\begin{align*}
\Bigl| \int v_1 \overline{v}_2 \overline{v}_3 dt dx \Bigr| & = 
\Bigl| \int \F_{t,x} \bigl( v_1  \overline{v}_3 \bigr) \overline{ \F_{t,x}{v}}_2 d\tau  d \xi \Bigr|\\
& \lesssim \Bigl\| \int \F_{t,x}{v}_1 (\tau + \tau_3,\xi+\xi_3) \overline{\F_{t,x}{v}_3}(\tau_3,\xi_3) d\tau_3 d\xi_3 \Bigr\|_{L_{\xi}^2 L_{\tau}^{\infty}} \|\F_{t,x}{v}_2\|_{L_{\xi}^2 L_{\tau}^1}\\
& \lesssim L_2^{\frac12} \Bigl\| \int \F_{t,x}{v}_1 (\tau + \tau_3,\xi+\xi_3) \overline{\F_{t,x}{v}_3}(\tau_3,\xi_3) d\tau_3 d\xi_3 \Bigr\|_{L_{\xi}^2 L_{\tau}^{\infty}} \|v_2\|_{L_{t,x}^2}.
\end{align*}
Thus, it suffices to show
\[
\begin{split}
\Bigl\| \int \F_{t,x}{v}_1 (\tau + \tau_3,\xi+\xi_3) & \overline{\F_{t,x}{v}_3}(\tau_3,\xi_3) d\tau_3 d\xi_3 \Bigr\|_{L_{\xi}^2 L_{\tau}^{\infty}} \\
 &\lesssim  (L_1 L_3)^{\frac12} N_1^{-\frac12} N_3^{\frac12}\|\F_{t,x}{v}_1\|_{L_{\tau, \xi}^2} \|\F_{t,x}{v}_3\|_{L_{\xi}^2 L_{\tau}^{\infty}}.
\end{split}
\]
The support conditions of $\F_{t,x}{v}_1$ and $\F_{t,x}{v}_3$ imply
\begin{align*}
\supp \F_{t,x}{v}_1 \subset \{(\tau,\xi) \in \R \times \R^2 \, | \, |\xi| \lesssim N_1, \ |\tau-|\xi|^2| \lesssim L_1\},\\
\supp \F_{t,x}{v}_3 \subset \{(\tau,\xi) \in \R \times \R^2 \, | \, |\xi| \lesssim N_3, \ |\tau-|\xi|^2| \lesssim L_3\}.
\end{align*}
Therefore, by H\"{o}lder's inequality, we have
\begin{align*}
& \Bigl\| \int \F_{t,x}{v}_1 (\tau + \tau_3,\xi+\xi_3) \overline{\F_{t,x}{v}_3}(\tau_3,\xi_3) d\tau_3 d\xi_3 \Bigr\|_{L_{\xi}^2 L_{\tau}^{\infty}}\\ & \lesssim 
\Bigl\| \Bigl( \int \bigl| \F_{t,x}{v}_1 (\tau + \tau_3,\xi+\xi_3)\bigr|^2 \bigl| \overline{\F_{t,x}{v}_3}(\tau_3,\xi_3)\bigr|^2 d\tau_3 d\xi_3\Bigr)^{\frac12} |E(\tau,\xi)|^{\frac12} \Bigr\|_{L_{\xi}^2 L_{\tau}^{\infty}}\\
& \lesssim \sup_{\tau,\xi: |\xi| \sim N_1} |E(\tau,\xi)|^{\frac12} \|\F_{t,x}{v}_1\|_{L_{\tau, \xi}^2} \|\F_{t,x}{v}_3\|_{L_{\xi}^2 L_{\tau}^{\infty}}.
\end{align*}
Here we used the assumption $N_3 \ll N_1$ to deduce $|\xi| \sim N_1$ and the set $E(\tau,\xi) \subset \R^{3}$ is defined by
\[
E(\tau,\xi) = \{(\tau_3,\xi_3) \in \R \times \R^2 \,| \, |\xi_3| \lesssim N_3, \ |\tau_3 - |\xi_3|^2| \lesssim L_3, \ |\tau+\tau_3 - |\xi+\xi_3|^2| \lesssim L_1 \}.
\]
We need to prove
\begin{equation}\label{est:prop4.1-12}
\sup_{\tau,\xi: |\xi| \sim N_1} |E(\tau,\xi)| \lesssim N_1^{-1}N_3 L_1 L_3.
\end{equation}
By rotation, without loss of generality, we may assume that $\xi \in \R^2$ is on the right half of the first axis, i.e. $\xi=(|\xi|,0)$. We use the notation $\xi_3 = (\xi_3^{(1)}, \xi_3^{(2)})$. 
For fixed $\xi_3$, we get
\begin{equation}\label{est:prop4.1-13}
\sup_{\tau,\xi: |\xi| \sim N_1} \bigl| \{ \tau_3 \, | \, (\tau_3, \xi_3) \in E(\tau, \xi) \}\bigr|
\lesssim \min(L_1, L_3).
\end{equation}
We observe that
\begin{align*}
\max (L_1, L_3) &
\gtrsim \bigl| (\tau+ \tau_3) - |\xi + \xi_3|^2 - (\tau_3 -|\xi_3|^2) \bigr|\\
& = \bigl| (\tau- |\xi|^2)- 2 \xi \cdot \xi_3 \bigr|
\end{align*}
and $ |\partial_{\xi_3^{(1)}} (\xi \cdot \xi_3)| =|\xi| \sim N_1$. Then, for fixed $\xi_3^{(2)}$, we have
\begin{equation}\label{est:prop4.1-14}
\sup_{\tau,\xi: |\xi| \sim N_1} \bigl| \{ \xi_3^{(1)} \, | \, (\tau_3, \xi_3) \in E(\tau, \xi) \}\bigr|
\lesssim N_1^{-1}\max(L_1, L_3).
\end{equation}
By combining \eqref{est:prop4.1-13}, \eqref{est:prop4.1-14} with the support condition $|\xi_3| \lesssim N_3$, we get \eqref{est:prop4.1-12}.

Lastly, we consider \eqref{est:prop4.1-01} in the case $s=-1/4$ and $\sigma=0$. Similarly to the $s<-1/4$ case, it suffices to see
\begin{equation}\label{est:prop4.1-15}
\bigl\|P_{N_3}\F_{t,x}^{-1} \bigl( \langle \tau - |\xi|^2 \rangle^{-1} \F_{t,x}({v}_1 {\overline{v}_2}) \bigr)\bigr\|_{Z_{\kappa}^{0,0}} \lesssim  (L_1L_2)^{\frac12} N_1^{-\frac12} 
\|v_1\|_{L_{t,x}^2}\|v_2\|_{L_{t,x}^2},
\end{equation}
where $N_3 \lesssim N_1 \sim N_2$, $v_j=Q_{L_j}P_{N_j}u_j$ $(j=1,2)$. 
We decompose $v_1 \overline{v}_2$ as $Q_{L_3 \ll N_3^2}(v_1 \overline{v}_2) + Q_{L_3 \gtrsim N_3^2} (v_1 \overline{v}_2)$. For the former term, \eqref{est:prop4.1-05} implies \eqref{est:prop4.1-15}. 
The latter term is handled by \eqref{est:prop4.1-09} and \eqref{est:prop4.1-11} in a similar way to the case $s<-1/4$.
\end{proof}
\section{On the optimalities of our results}\label{Section5}
In this section, we prove Theorems \ref{thm2}, \ref{thm3}, 
and Propositions~\ref{proposition2.2}, \ref{prop:SharpnessFactor}. First, we consider Theorem \ref{thm2}. The following argument is based on the proof of Theorem 1.2. (i) in \cite{KT10} by Kishimoto and Tsugawa.
\begin{proof}[Proof of Theorem \ref{thm2}]
Suppose that $s < -1/4$, $0 \leq \sigma < \min(-2 s -1/2, 1/2)$, and put $s_{\sigma} = -\sigma/2 -1/4$ which is equivalent to $\sigma = -2 s_{\sigma} -1/2$. Note that $\sigma < \min(-2 s -1/2, 1/2)$ means $\max(s, -1/2) < s_{\sigma}$. Let $N \gg 1$. We define the set $C_N \subset \R^2$ by
\[
C_N = \{ \xi=(|\xi| \cos \theta, |\xi| \sin \theta) \in \R^2 \, | \, N \leq |\xi| \leq N + N^{-1}, \  |\theta| \leq N^{-1}\}.
\]
Let $0<\delta \ll 1$. We define the function $f_{N,\delta}$ by
\[
\F_{x} f_{N,\delta} = \delta N^{\frac12 - s_{\sigma} - \sigma} \mathbf{1}_{C_N}.
\]
For $\ell \in \Z$, $r >0$, and $f \in L^2(\R^2)$, we define
\[
(f)_{\ell}(r) = \frac{1}{2 \pi} \int_0^{2 \pi} f(r \cos \theta, r \sin \theta) e^{-i \ell \theta} d\theta.
\]
Then, if $N \leq r \leq N+N^{-1}$, it is easy to see
\[
|(\mathbf{1}_{C_N})_{\ell} (r)| \sim N^{-1}, \ (\mathrm{if} \ |\ell| \leq 2^{-3} N), \quad 
|(\mathbf{1}_{C_N})_{\ell} (r)| \lesssim |\ell|^{-1}, \ (\mathrm{if} \ |\ell| \gtrsim N).
\]
Since $0 \leq \sigma <1/2$, these inequalities imply
\begin{align}
\|f_{N,\delta} \|_{H^{s,\sigma}}& \sim N^{s} \| \langle \Omega \rangle^{\sigma} \F_{x} f_{N,\delta}\|_{L_{\xi}^2} \notag \\
& = \delta N^{s- s_{\sigma} +\frac12 - \sigma} \Bigl(\int_0^{\infty} \| \langle \Omega \rangle^{\sigma} \mathbf{1}_{C_N}(r \cos \theta, r \sin \theta) \|_{L_{\theta}^2}^2 r dr\Bigr)^{\frac12} \notag \\
& \sim \delta N^{s-s_{\sigma} + \frac12 - \sigma} \Bigl(\sum_{\ell \in \Z} \langle \ell \rangle^{2\sigma} | (\mathbf{1}_{C_N})_{\ell} (N)|^2\Bigr)^{\frac12} \sim \delta N^{s-s_{\sigma}}.
\label{est:thm2-00}
\end{align}
In particular, it holds that $\|f_{N,\delta}\|_{H^{s_{\sigma}, \sigma}} \sim \delta$. 

Next, we will see that, for $t \in[0,2^{-5}]$, it holds that
\begin{equation}\label{est:thm2-01}
\Bigl\| \int_0^t e^{-i(t-t') \Delta} |e^{-it' \Delta} f_{N,\delta}|^2 d t' \Bigr\|_{H^{s,\sigma}} 
\gtrsim t \delta^2.
\end{equation}
For $\xi \in \R^2$ and $\widetilde{\xi} \in \R^2$, we define 
\begin{equation*}
\Phi(\xi, \widetilde{\xi}) =  |\xi|^2 -  |\xi - \widetilde{\xi}|^2 +  |\widetilde{\xi}|^2.
\end{equation*}
If $|\xi| < 1$, $\widetilde{\xi} \in C_N$, and $\xi - \widetilde{\xi} \in C_N$, we compute that
\begin{equation}
\label{est:modulation-thm1.3}
|\Phi(\xi, \widetilde{\xi})| \leq |\xi|^2 + \bigl| \bigl( |\widetilde{\xi}| + |\xi - \widetilde{\xi}| \bigr) \bigl( |\widetilde{\xi}| - |\xi-\widetilde{\xi}| \bigr) \bigr| \leq 2^3.
\end{equation}
Hence, if $t \in [0,2^{-5}]$, letting $B_1 = \{\xi \in \R^2 \, | \,  |\xi| <1\}$, we have
\begin{align*}
& \Bigl\| \int_0^t e^{-i(t-t') \Delta} |e^{-it' \Delta} f_{N,\delta}|^2 d t' \Bigr\|_{H^{s,\sigma}} 
\gtrsim \Bigl\| \langle \Omega \rangle^{\sigma}\F_{x} \Bigl(\int_0^t e^{-i(t-t') \Delta} |e^{-it' \Delta} f_{N,\delta}|^2 d t' \Bigr) \Bigr\|_{L_{\xi}^2(B_1)} \\
& = \delta^2 N^{1 - 2s_{\sigma} -2 \sigma} \Bigl\| \langle \Omega \rangle^{\sigma} \Bigl( \int^t_0 \int_{\R^2}  
e^{-i t' \Phi(\xi,\widetilde{\xi})} {\bf 1}_{C_N}(\xi-\widetilde{\xi}) {\bf 1}_{C_N}(-\widetilde{\xi}) d \widetilde{\xi} d t' \Bigr)\Bigr\|_{L_{\xi}^2(B_1)}\\
& \sim t \delta^2 N^{1 - 2s_{\sigma} -2 \sigma}\Bigl\| \langle \Omega \rangle^{\sigma} \Bigl( \int_{\R^2}  {\bf 1}_{C_N}(\xi-\widetilde{\xi}) {\bf 1}_{C_N}(-\widetilde{\xi}) d \widetilde{\xi} \Bigr)\Bigr\|_{L_{\xi}^2(B_1)}.
\end{align*}
Thus, \eqref{est:thm2-01} follows from
\begin{equation}\label{est:thm2-02}
\Bigl\| \langle \Omega \rangle^{\sigma} \Bigl( \int_{\R^2}   {\bf 1}_{C_N}(\xi-\widetilde{\xi}) {\bf 1}_{C_N}(-\widetilde{\xi}) d \widetilde{\xi} \Bigr)\Bigr\|_{L_{\xi}^2(B_1)} \gtrsim N^{-\frac32 + \sigma}.
\end{equation}
Let us write $\xi = (\xi^{(1)}, \xi^{(2)}) \in \R^2$. We define
\[
T_N= \Bigl\{\xi=(\xi^{(1)},\xi^{(2)}) \in \R^2 \, \Bigl| \, |\xi^{(1)}| \leq \frac{1}{4N}, \quad |\xi^{(2)}| \leq \frac12\Bigr\}.
\]
Put $F(\xi) = \int_{\R^2}  {\bf 1}_{C_N}(\xi-\widetilde{\xi}) {\bf 1}_{C_N}(-\widetilde{\xi}) d \widetilde{\xi}$. Clearly, $F$ is a real and positive-valued function which satisfies $F(\xi^{(1)},\xi^{(2)}) = F(\xi^{(1)}, -\xi^{(2)})$. 
Let us first observe that
\begin{equation}\label{est:FbelowN}
F(\xi) \gtrsim N^{-1} \quad \mathrm{if} \ \ \xi \in T_N.
\end{equation}
For $\xi_0 \in \R^2$, we define the set $T_N + \xi_0$ by the collection of $\xi \in \R^2$ such that $\xi - \xi_0 \in T_N$. 
It is easy to see that the following inclusion holds:
\[
T_{N} + \Bigl(N+\frac{1}{4N},0\Bigr) \subset C_N,
\]
which implies that if $\xi \in T_N$, we have
\begin{align*}
F(\xi) & \geq \int_{\R^2} {\bf 1}_{T_{N} + (N+\frac{1}{4N},0)}(\xi-\widetilde{\xi}) {\bf 1}_{T_{N} + (N+\frac{1}{4N},0)}(-\widetilde{\xi}) d \widetilde{\xi}\\
& = \int_{\R^2} {\bf 1}_{T_{N}}(\xi-\widetilde{\xi}) {\bf 1}_{T_{N}}(-\widetilde{\xi}) d \widetilde{\xi} = |T_N \cap (T_N + \xi)| \sim N^{-1}.
\end{align*}
This completes the proof of \eqref{est:FbelowN}. 

It is straightforward to check that if $|\xi^{(1)}| \geq 3/N$ then $F(\xi)=F(\xi^{(1)},\xi^{(2)})=0$. 
This and \eqref{est:FbelowN} suggest that if $1/4 \leq r \leq 1/2$ then we have
\begin{align*}
& F(r \cos \theta, r \sin \theta) \gtrsim N^{-1} \quad \mathrm{if} \ \ \Bigl| \theta - \frac{\pi}{2}\Bigr| \leq N^{-1},\\
& F(r \cos \theta, r \sin \theta) =0 \qquad \ \, \mathrm{if} \ \ \Bigl| \theta - \frac{\pi}{2} \Bigr| \geq 2^3 N^{-1} \ \mathrm{and} \ \theta \in [0,\pi].
\end{align*}
Note that, if $|\theta- \pi /2| \leq 2^3 N^{-1}$ and $\ell \in 2 \Z = \{ 0, \pm 2, \pm 4, \ldots\}$ satisfies $|\ell| \leq 2^{-10}N$, then the inequality $\cos (\ell \theta) > 1/2$ holds. 
Therefore, if $1/4 \leq r \leq 1/2$, for $\ell \in 2 \Z $ such that $|\ell| \leq 2^{-10} N$, it holds that
\begin{align*}
|(F)_{\ell}(r)| & = \frac{1}{2 \pi}\Bigl| \int_{0}^{2\pi} F(r \cos \theta, r \sin \theta) e^{-i \ell \theta} d \theta \Bigr| \\
& \geq \frac{1}{2\pi} \Bigl| \int_{0}^{2\pi} F(r \cos \theta, r \sin \theta) \cos (\ell \theta) d \theta \Bigr|\\
& = \frac{1}{\pi} \Bigl| \int_{0}^{\pi} F(r \cos \theta, r \sin \theta) \cos (\ell \theta) d \theta \Bigr| \gtrsim N^{-2}.
\end{align*}
Here we used $F(\xi^{(1)},\xi^{(2)}) = F(\xi^{(1)}, -\xi^{(2)})$ and $\cos (\ell \theta) = \cos (- \ell \theta)$. 
Consequently, we conclude that
\begin{align*}
\| \langle \Omega \rangle^{\sigma} F \|_{L_{\xi}^2(B_1)} & \gtrsim 
\Bigl( \int_{\frac14}^{\frac12} \int_0^{2 \pi} |\langle \Omega \rangle^{\sigma}F(r \cos\theta, r\sin \theta)|^2 d \theta r dr \Bigr)^{\frac12}\\
& \gtrsim \inf_{r\in [\frac{1}{4},\frac{1}{2}]}  \bigl( \sum_{\ell \in 2 \Z, |\ell| \leq 2^{-10}N} \langle \ell \rangle^{2 \sigma}|(F)_{\ell}(r)|^2 \bigr)^{\frac12}\\
& \sim N^{-\frac32 + \sigma},
\end{align*}
which implies \eqref{est:thm2-02}.

Now we prove Theorem \ref{thm2}. For $f \in L^2(\R^2)$, let us use
\[
\mathcal{Q}(f)(t) = - i \eta \int_0^t e^{-i(t-t') \Delta} |e^{-it' \Delta} f|^2 d t'.
\]
Since $s_{\sigma} > -1/2$, $\sigma = -2 s_{\sigma} -1/2$, $\|f_{N,\delta}\|_{H^{s_{\sigma}, \sigma}} \sim \delta$, and $\delta$ is small, it follows from Theorem \ref{thm1} that we may find the solution of \eqref{NLS} with the initial datum $f_{N,\delta}$ which satisfies
\begin{equation}
\label{solution-thm1.3-01}
u_{N,\delta}(t)  = e^{-it \Delta} f_{N,\delta} - i \eta \int_0^t e^{-i(t-t') \Delta} |u_{N,\delta}(t')|^2 d t', \quad t \in [0,1].
\end{equation}
We define $v_{N,\delta}$ as 
\[
v_{N,\delta}(t)  = u_{N,\delta}(t) -  e^{-it \Delta} f_{N,\delta}  -\mathcal{Q}(f_{N,\delta})(t), \quad t \in [0,1].
\]
Then, it follows from \eqref{solution-thm1.3-01} that 
\begin{align*}
v_{N,\delta}(t) & = - i \eta \int_0^t e^{-i(t-t') \Delta} |u_{N,\delta}(t')|^2 d t' -\mathcal{Q}(f_{N,\delta})(t) \\
& = - i \eta \int_0^t e^{-i(t-t') \Delta} \Bigl( \bigl| e^{-it' \Delta} f_{N,\delta}  +\mathcal{Q}(f_{N,\delta})(t')+v_{N,\delta}(t')\bigr|^2 - |e^{-it' \Delta} f_{N,\delta}|^2\Bigr) d t'\\
& = - i \eta \int_0^t e^{-i(t-t') \Delta} \Bigl( |v_{N,\delta}(t')|^2 + 2 \Re \bigl( (e^{-it' \Delta} f_{N, \delta}) \overline{v_{N,\delta}}(t') \bigr)\\
& \qquad \qquad \qquad \qquad + 2 \Re \bigl(  \overline{\mathcal{Q}(f_{N,\delta})}(t')(e^{-it' \Delta} f_{N,\delta}  + v_{N,\delta}(t'))\bigr) \Bigr) d t'.
\end{align*}
Since $s_{\sigma} > -1/2$, $\sigma = -2 s_{\sigma} -1/2$, as in the proof of~\eqref{est:NonlinearEst}, we may choose $0<\kappa<1$ so that the estimate
\begin{equation}\label{est:NonlinearEst-thm2}
\|\mathcal{N}(u,v)\|_{Z^{s_{\sigma},\sigma}_{\kappa}([0,1])}  \lesssim \|u\|_{Z^{s_{\sigma},\sigma}_{\kappa}([0,1])} \|v\|_{Z^{s_{\sigma},\sigma}_{\kappa}([0,1])}
\end{equation}
holds. Here the bilinear operator $\mathcal{N}(u,v)$ is defined in \eqref{definition:N}. 
Let $V(\delta) = \|v_{N,\delta}\|_{Z^{s_{\sigma},\sigma}_{\kappa}([0,1])}$ where $Z^{s_{\sigma},\sigma}_{\kappa}([0,1])$ is defined in Subsection~\ref{subsection2.1}. 
Then, by employing Lemma~\ref{lemma2.4}, \eqref{est:NonlinearEst-thm2} and \eqref{est:thm2-00} to estimate the integral form of $v_{N,\delta}$, we get
\[
V(\delta) \lesssim V(\delta)^2 + (\delta + \delta^2) V(\delta) + (\delta^3 + \delta^4).
\]
We recall the embedding \eqref{est:embeddingZ}. Because $V(\delta)$ is continuous and $V(0)=0$, we have
\begin{equation}
\label{est-thm2-03}
\| v_{N,\delta}(t) \|_{H^{s_{\sigma}, \sigma}} \leq V(\delta) \lesssim \delta^3,
\end{equation}
for any $t \in [0,1]$ and sufficiently small $\delta>0$. Then, by utilizing the estimates \eqref{est:thm2-01} and \eqref{est-thm2-03}, for any $t \in [0, 2^{-5}]$ there exists sufficiently small $\delta>0$ such that
\begin{align*}
\|u_{N,\delta}(t)\|_{H^{s,\sigma}} & \geq \|\mathcal{Q}(f_{N,\delta}) (t)\|_{H^{s,\sigma}} - \| v_{N,\delta}(t) \|_{H^{s_{\sigma}, \sigma}} -  \| f_{N,\delta} \|_{H^{s,\sigma}} \\
& \gtrsim t \delta^2.
\end{align*}
Because $\|f_{N,\delta} \|_{H^{s,\sigma}} \to 0$ as $N \to \infty$ 
which is a consequence of \eqref{est:thm2-00} and $s- s_{\sigma}<0$, this inequality implies the discontinuity of the flow map.
\end{proof}
Next, we prove Theorem~\ref{thm3}.
\begin{proof}[Proof of Theorem~\ref{thm3}]
Let $B_1 = \{\xi \in \R^2 \, | \,  |\xi| <1\}$. 
We will prove that, for arbitrarily large $C \gg 1$, there exists a radial function $f \in H^{-\frac12}_{\mathrm{rad}}(\R^2)$ such that
\begin{equation}\label{est:goal-thm1.3}
\sup_{0<t \leq 2^{-5}} \Bigl\| {\bf 1}_{B_1} (\xi) \F_{x}\Bigl( \int^t_0 e^{-i (t- t') \Delta} \bigl(
( e^{-it' \Delta} f ) (\overline{e^{-it'\Delta} f} )\bigr) d t'\Bigr) \Bigr\|_{L_{\xi}^2} \geq C \| f\|_{H^{-\frac12}}^2.
\end{equation}
Clearly, for any $s \leq  -1/2$, this yields
\[
\sup_{0<t \leq 2^{-5}} \Bigl\| \int^t_0 e^{-i (t- t') \Delta} \bigl(
( e^{-it' \Delta} f ) (\overline{e^{-it'\Delta} f} )\bigr) d t' \Bigr\|_{H^{s}} \geq C  \| f\|_{H^{s}}^2,
\]
which implies Theorem \ref{thm3}. For the details of the lack of the twice differentiability of the flow map, we refer to Section 6 of \cite{Bo97}.

Let $N$, $M$ be dyadic, and $0 \leq j \leq N-1$. We define the set
\[
A_N  = \{ \xi \in \R^2 \, | \, N \leq |\xi| \leq N+ N^{-1} \},
\]
and the radial function $f_N$ as
\begin{equation*}
\F_x{f_N} \, = \, N^{\frac12} {\bf{1}}_{A_N}.
\end{equation*}
Clearly, $\|f_N\|_{H^{-\frac12}(\R^2)} \sim 1$. 
For $\xi \in \R^2$ and $\widetilde{\xi} \in \R^2$, we define 
\begin{equation*}
\Phi(\xi, \widetilde{\xi}) =  |\xi|^2 -  |\xi - \widetilde{\xi}|^2 +  |\widetilde{\xi}|^2.
\end{equation*}
As in the proof of~\eqref{est:modulation-thm1.3}, we can observe that $|\Phi(\xi, \widetilde{\xi})| \leq 2^3$ if $|\xi| \leq 1$, $\widetilde{\xi} \in A_N$, and $\widetilde{\xi} - \xi \in A_N$. 
We will show that if $0 < t \leq 2^{-5}$, it holds that
\begin{equation}
\Bigl\| {\bf 1}_{B_1}(\xi) \F_{x} \Bigl( \int^t_0 e^{-i (t- t') \Delta} |e^{-it' \Delta} f_N |^2 d t' \Bigr)\Bigr\|_{L_{\xi}^2} \gtrsim t (\log N)^{\frac12}.\label{desired-est}
\end{equation}
For arbitrarily large $C \gg 1$, \eqref{est:goal-thm1.3} follows from \eqref{desired-est} by taking $N$ large. 
Applying the Fourier transform, \eqref{desired-est} is given by
\begin{equation}
\label{est:desired-est-2-thm1.5}
\Bigl\| {\bf 1}_{B_1}(\xi) \int^t_0 \int_{\R^2}  e^{-i t' \Phi(\xi,\widetilde{\xi})} {\bf 1}_{A_N}(\xi-\widetilde{\xi}) {\bf 1}_{A_N}(-\widetilde{\xi}) d \widetilde{\xi} d t' \Bigr\|_{L_{\xi}^2} \gtrsim t (\log N)^{\frac12}N^{-1}.
\end{equation}
Since $|\Phi(\xi, \widetilde{\xi})| \leq 2^3$ and $0<t \leq 2^{-5}$, we see $1/2 \leq \Re{\bigl(e^{i t' \Phi(\xi, \widetilde{\xi})}\bigr)}$ for any $t' \in [0,t]$. Hence, \eqref{est:desired-est-2-thm1.5} is equivalent to
\begin{equation}
\label{est:desired-est-3-thm1.5}
\Bigl\| {\bf 1}_{B_1}(\xi) \int_{\R^2}   {\bf 1}_{A_N}(\xi-\widetilde{\xi}) {\bf 1}_{A_N}(-\widetilde{\xi}) d \widetilde{\xi}  \Bigr\|_{L_{\xi}^2} \gtrsim  (\log N)^{\frac12} N^{-1}.
\end{equation}
Let $j \in 2^{\N_0}$ satisfy $0 \leq j \leq N-1$. We define
\[
{\mathfrak{D}}_{N,j}  = \Bigl\{ \xi = (r \cos \theta, r \sin \theta) \in A_N \, \Bigl| \, 
\theta \in \Bigl[\frac{2\pi}{N} j, \, \frac{2\pi}{N} \, (j+1) \Bigr]  \Bigr\}.
\]
Then, it follows from $A_N = \bigcup_{0 \leq j \leq N-1}{\mathfrak{D}}_{N,j}$ that
\begin{equation}
\label{est:thm1.5-01}
\begin{split}
& \Bigl\| {\bf 1}_{B_1}(\xi) \int_{\R^2}   {\bf 1}_{A_N}(\xi-\widetilde{\xi}) {\bf 1}_{A_N}(-\widetilde{\xi}) d \widetilde{\xi}  \Bigr\|_{L_{\xi}^2} \\
\geq & \Bigl\|  \sum_{0 \leq j \leq N-1} {\bf 1}_{B_1}(\xi) \int_{\R^2}  {\bf 1}_{{\mathfrak{D}}_{N,j} }(\xi-\widetilde{\xi}) {\bf 1}_{{\mathfrak{D}}_{N,j} }(-\widetilde{\xi}) d \widetilde{\xi}  \Bigr\|_{L_{\xi}^2}.
\end{split}
\end{equation}
We define the line segments $\{L_j\}$ and the tube sets $\{T_{N,j}\}$ by
\begin{align*}
L_j & = \Bigl\{  \Bigl( - r\sin \frac{2 \pi}{N}j, \, r \cos \frac{2 \pi}{N}j \Bigr) \in \R^2 \, \Bigl| \, |r| \leq 2^{-5}\Bigr\},\\
T_{N,j} & = \{\xi \in \R^2 \, | \, \inf_{\eta \in L_{j}} |\xi-\eta| \leq 2^{-5}N^{-1}\}.
\end{align*}
By the same argument as in the proof of \eqref{est:FbelowN}, we may obtain
\[
{\bf 1}_{B_1}(\xi) \int_{\R^2}  {\bf 1}_{{\mathfrak{D}}_{N,j} }(\xi-\widetilde{\xi}) {\bf 1}_{{\mathfrak{D}}_{N,j} }(-\widetilde{\xi}) d \widetilde{\xi}  \gtrsim N^{-1}  {\bf 1}_{T_{N,j}}(\xi).
\]
Therefore, combined with \eqref{est:thm1.5-01}, the estimate \eqref{est:desired-est-3-thm1.5} is verified by
\begin{equation}
\label{est:thm1.5-Kakeya}
\Bigl\| \sum_{0 \leq j \leq N-1} {\bf 1}_{T_{N,j}} \Bigr\|_{L_{\xi}^2} \sim (\log N)^{\frac12}.
\end{equation}
For $0 \leq j,k \leq N-1$, it is easy to see
\[
|T_{N,j} \cap T_{N,k}| \sim \frac{1}{N(1+|j-k|)}.
\]
By utilizing this, we have
\begin{align*}
\Bigl\| \sum_{0 \leq j \leq N-1} {\bf 1}_{T_{N,j}} \Bigr\|_{L_{\xi}^2}^2 & = 
\int_{\R^2} \sum_{0 \leq j,k  \leq N-1} {\bf 1}_{T_{N,j}}(\xi) {\bf 1}_{T_{N,k}}(\xi) d \xi\\
& \sim \sum_{0 \leq j,k  \leq N-1}\frac{1}{N(1+|j-k|)}\\
& \sim \sum_{0 \leq \ell \leq N-1} \frac{1}{(1+\ell)} \sim \log N.
\end{align*}
This completes the proof of \eqref{est:thm1.5-Kakeya}.
\end{proof}
\begin{rem}
The estimate \eqref{est:thm1.5-Kakeya} is connected with the Kakeya maximal estimate in $\R^2$ obtained by C\'{o}rdoba \cite{Cor77}.
\end{rem}
Next, we establish Proposition~\ref{proposition2.2}. The proof is based on that by Nakanishi, Takaoka, and Tsutsumi. See the proof of Theorem~1 (iii) in~\cite{NTT01}.
\begin{proof}[Proof of Proposition~\ref{proposition2.2}]
Let $N \gg 1$. We define the set $D_N \subset \R^3$ as
\[
D_N = \{ (\tau,|\xi|\cos\theta, |\xi|\sin \theta)  \, | \, N \leq |\xi| \leq N+1, \ |\theta| \leq N^{-1}, \ |\tau-|\xi|^2| \leq 1 \}.
\]
For $(\tau,\xi) \in \R^3$, we define $u_N$ by
\[
\F_{t,x}{u}_N (\tau,\xi)= N^{s + \frac12} \mathbf{1}_{D_N}(\tau,\xi).
\]
As in the proof of \eqref{est:thm2-00}, we have
\[
\|u_N\|_{X^{s,-2s-\frac12;b}} \sim N^{2 s+ \frac12} \|\langle \Omega \rangle^{-2 s - \frac12} \mathbf{1}_{D_N}(\tau,|\xi|\cos\theta, |\xi| \sin \theta) \|_{L_{\tau}^2 L_{\xi}^2} \sim 1.
\]
Thus, it suffices to prove
\begin{equation}\label{est:lemma2.2-1}
\|u_N \overline{u_N}\|_{X^{s, - 2 s -\frac12;-\frac12}} \gtrsim (\log N)^{\frac12}.
\end{equation}
For fixed $\tau \in \R$, we define
\[
F_{\tau}(\xi) = \int \mathbf{1}_{D_N}(\tau-\widetilde{\tau},\xi-\widetilde{\xi}) \mathbf{1}_{D_N}(-\widetilde{\tau} , -\widetilde{\xi}) d \widetilde{\tau} d \widetilde{\xi}.
\]
To show \eqref{est:lemma2.2-1}, we prove that if $\tau$ satisfies $|\tau| \leq N/10$, it holds that
\begin{equation}\label{est:lemma2.2-2}
\|\langle \Omega \rangle^{-2s-\frac12} F_{\tau}\|_{L_{\xi}^2(B_1)} \gtrsim N^{-1-2 s},
\end{equation}
which readily yields \eqref{est:lemma2.2-1}. Let $\xi=(\xi^{(1)},\xi^{(2)})$. It is easily observed that if $\xi$ satisfies $|\tau- 2 N \xi^{(1)}| \leq 1/10$ and $|\xi| \leq 1/2$ then it holds that $F_{\tau}(\xi) \sim 1$. On the other hand, we have
\[
\supp F_{\tau} \subset \{ \xi=(\xi^{(1)},\xi^{(2)}) \in \R^2 \, | \, |\tau-2 N \xi^{(1)}| \leq 10, \ |\xi| \leq 10 \}.
\]
Thus, for $\tau$, $r$ satisfying $|\tau| \leq N/10$ and $1/4 \leq r \leq 1/2$, we may find $\theta_{\tau,r} \in [\pi/4,  3\pi/4]$ such that 
\begin{align*}
& F_{\tau}(r \cos \theta, r \sin \theta) \sim 1 \quad \mathrm{if} \ \ \theta \in [0 , \pi] \ \ \mathrm{and} \ \  |\theta - \theta_{\tau,r}| \leq 2^{-5} N^{-1},\\
& F_{\tau}(r \cos \theta, r \sin \theta) = 0 \quad \mathrm{if} \ \ \theta \in [0 , \pi] \ \ \mathrm{and} \ \  |\theta - \theta_{\tau,r}| \geq  2^{5} N^{-1}.
\end{align*}
Notice that $F_{\tau}(\xi^{(1)}, \xi^{(2)}) = F_{\tau}(\xi^{(1)}, - \xi^{(2)})$. 
We also have that if $\theta \in [\pi/4,  3\pi/4]$, $\ell \in \Z$, and $|\cos (\ell \theta)| \ll 1$, then $|\cos ((\ell+1) \theta)| \sim 1$. 
These imply that, for $\tau$, $r$ satisfying $|\tau| \leq N/10$ and $1/4 \leq r \leq 1/2$, if $|\ell|\leq 2^{-10}N$, we have
\begin{align*}
& |(F_{\tau})_{\ell}| + |(F_{\tau})_{\ell+1}| \\ 
\sim & \Bigl| \int_{0}^{2\pi} F(r \cos \theta, r \sin \theta) \cos (\ell \theta) d \theta \Bigr|
+\Bigl| \int_{0}^{2\pi} F(r \cos \theta, r \sin \theta) \cos ((\ell+1) \theta) d \theta \Bigr|\\
\sim & N^{-1}.
\end{align*}
Hence, if $|\tau| \leq N/10$, in the same way as in the proof of \eqref{est:thm2-02}, we can get \eqref{est:lemma2.2-2} and complete the proof.
\end{proof}
Lastly, we prove Proposition~\ref{prop:SharpnessFactor}. 
\begin{proof}[Proof of Proposition~\ref{prop:SharpnessFactor}]
Throughout the proof, we assume $0< \varepsilon \ll 1$. First, consider the case $2 \leq p < \infty$. 
Put
\begin{align*}
& T_1(\varepsilon) = \{(\tau,|\xi|\cos \theta, |\xi|\sin \theta) \in \R^3 \, | \, |\tau-|\xi|^2|\leq \varepsilon, \ 1 \leq |\xi| \leq 1+\varepsilon, \ |\theta| \leq \varepsilon^{\frac12}\},\\
& T_2 (\varepsilon)= -T_1(\varepsilon) = \{(\tau,\xi) \in \R^3 \, | \, (-\tau, -\xi) \in T_1(\varepsilon) \},\\
& T_3(\varepsilon)= \{(\tau,\xi^{(1)},\xi^{(2)}) \in \R^3 \, | \, |\tau-|\xi|^2|\leq 2^{-5}\varepsilon, \ |\xi^{(1)}| \leq 2^{-5} \varepsilon, \ 2^{-5} \varepsilon^{\frac12} \leq |\xi^{(2)}| \leq 2^{-4} \varepsilon^{\frac12} \}.
\end{align*}
If we put $S_1=S_3=\{(|\xi|^2,\xi) \, | \, \xi \in B_2\}$ and $S_2=\{(-|\xi|^2, \xi) \, | \, \xi \in B_2\}$ where $B_2=\{\xi \in \R^2 \, | \, |\xi| < 2\}$, the hypersurfaces $S_1$, $S_2$, $S_3$ satisfy \textit{Assumption}~\ref{assumption:hypersurface} and for $i=1,2,3$, 
$T_i(\varepsilon) \subset S_i(\varepsilon)$ hold. 
In addition, it is easy to see
\[
\|{\bf 1}_{T_1(\varepsilon)} \|_{L_{\tau,r}^2 L_{\theta}^{p}} \sim \varepsilon^{1+\frac{1}{2p}}, \quad \|{\bf 1}_{T_2(\varepsilon)}\|_{L_{\tau,\xi}^2} \sim \|{\bf 1}_{T_3(\varepsilon)}\|_{L_{\tau,\xi}^2} \sim \varepsilon^{\frac54}.
\]
Hence, it suffices to show
\begin{equation}\label{est:proofofrem3.3}
| {\bf 1}_{T_1(\varepsilon)} * {\bf 1}_{T_2(\varepsilon)} *{\bf 1}_{T_3(\varepsilon)}(0)| \sim \varepsilon^{5}.
\end{equation}
As in the proof of \eqref{est:FbelowN}, by the definitions of $T_1(\varepsilon)$, $T_2(\varepsilon)$, $T_3(\varepsilon)$, for any fixed $(\tau,\xi) \in T_3(\varepsilon)$, it follows that
\[
\int {\bf 1}_{T_1(\varepsilon)}(\widetilde{\tau},\widetilde{\xi}) {\bf 1}_{T_2(\varepsilon)}(-\tau - \widetilde{\tau}, -\xi - \widetilde{\xi}) d\widetilde{\tau} d \widetilde{\xi} \sim \varepsilon^{\frac52}.
\]
This implies \eqref{est:proofofrem3.3}.

Next, we consider the case $p= \infty$. Let $N$ be dyadic such that $1 \leq N \leq 2^{-5}\varepsilon^{-\frac12}$. Define
\begin{align*}
& A_1(\varepsilon) = \{(\tau,\xi) \in \R^3 \, | \, |\tau-|\xi|^2|\leq \varepsilon, \ 1 \leq |\xi| \leq 1+\varepsilon \},\\
& A_2 (\varepsilon)= \{(\tau,\xi) \in \R^3 \, | \, |\tau+|\xi|^2+2 \varepsilon^{\frac12}|\leq \varepsilon, \ 1 -\varepsilon^{\frac12} \leq |\xi| \leq 1-\varepsilon^{\frac12} +\varepsilon \},\\
& A_{3,N}(\varepsilon)= \{(\tau,\xi) \in \R^3 \, | \, |\tau-|\xi|^2|\leq 2^{-5} \varepsilon, \ \varepsilon^{\frac12} + N \varepsilon \leq |\xi| \leq \varepsilon^{\frac12} + 2 N \varepsilon\}.
\end{align*}
Let $S_1=S_3=\{(|\xi|^2,\xi) \, | \, \xi \in B_2\}$ and $\widetilde{S}_2 = \{(-|\xi|^2-2 \varepsilon^{\frac12}, \xi) \, | \, \xi \in B_2\}$. Clearly, the hypersurfaces $S_1$, $\widetilde{S}_2$, $S_3$ satisfy \textit{Assumption}~\ref{assumption:hypersurface} and $A_1(\varepsilon) \subset S_1(\varepsilon)$, $A_2(\varepsilon) \subset \widetilde{S}_2(\varepsilon)$, $\bigcup_{1 \leq N \leq 2^{-5}\varepsilon^{-\frac12}}A_{3,N}(\varepsilon) \subset S_3(\varepsilon)$. We define
\[
f_1 = {\bf 1}_{A_1(\varepsilon)}, \quad f_2={\bf 1}_{A_2(\varepsilon)}, \quad f_3= \sum_{1\leq N \leq 2^{-5} \varepsilon^{-\frac12}}N^{-\frac12}{\bf 1}_{A_{3,N}(\varepsilon)}.
\]
Since $|A_1(\varepsilon)| \sim |A_2(\varepsilon)| \sim \varepsilon^2$ and $|A_{3,N}(\varepsilon)| \sim N \varepsilon^{\frac52}$, we have
\[
\|f_1\|_{L_{\tau,r}^2 L_{\theta}^{\infty}} \sim  \|f_1\|_{L^2} \sim \|f_2\|_{L^2} \sim \varepsilon, \quad \|f_3\|_{L^2} \sim \langle \log \varepsilon \rangle^{\frac12} \varepsilon^{\frac54}.
\]
Therefore, it suffices to prove
\begin{equation}\label{est:proofofrem3.3-2}
| {\bf 1}_{A_1(\varepsilon)} * {\bf 1}_{A_2(\varepsilon)} *{\bf 1}_{A_{3,N}(\varepsilon)}(0)| \sim N^{\frac12} \varepsilon^{\frac{19}{4}}.
\end{equation}
For fixed $(\tau,\xi) \in A_{3,N}(\varepsilon)$, we will show
\begin{equation}\label{est:proofofrem3.3-3}
\int {\bf 1}_{A_1(\varepsilon)}(\widetilde{\tau},\widetilde{\xi}) {\bf 1}_{A_2(\varepsilon)}(-\tau - \widetilde{\tau}, -\xi - \widetilde{\xi}) d\widetilde{\tau} d \widetilde{\xi} \sim N^{-\frac12} \varepsilon^{\frac94}.
\end{equation}
This readily gives \eqref{est:proofofrem3.3-2}. We consider \eqref{est:proofofrem3.3-3}. 
The proof is analogous to the geometric observation in \cite[Section 5.2.1]{Bej06}. 
For $(\tau,\xi) \in A_{3,N}(\varepsilon)$, put
\[
E(\tau,\xi) = \{(\widetilde{\tau},\widetilde{\xi})\in A_1(\varepsilon) \, | \, (-\tau-\widetilde{\tau},-\xi-\widetilde{\xi}) \in A_2(\varepsilon) \}.
\]
We need to prove that the measure of $E(\tau,\xi)$ is comparable to $N^{-\frac12}\varepsilon^{\frac94}$. For simplicity, here we fix $\xi = (\varepsilon^{\frac12}+N\varepsilon,0)$ and ignore the time component. It suffices to show that the measure of the set
\[
\biggl\{ (r \cos \theta, r \sin \theta) \in \R^2  \, \biggl| \, 
\begin{aligned}
&1 \leq r \leq 1+\varepsilon, \\
& 1 - \varepsilon^{\frac12} \leq |(r \cos \theta-\varepsilon^{\frac12} - N \varepsilon, r \sin \theta)| \leq 1 -\varepsilon^{\frac12}+\varepsilon
 \end{aligned} \biggr\}
\]
is bounded from below by $N^{-\frac12} \varepsilon^{\frac54}$. 
By symmetry, we may assume $\theta \in [0,\pi]$. 

For fixed $r$ satisfies $1 \leq r \leq 1+ \varepsilon$, we define the function $F_{r} : [0,\pi] \to \R$ by
\[
F_{r}(\theta) = |(r \cos \theta-\varepsilon^{\frac12} - N \varepsilon, r \sin \theta)|.
\]
Clearly, the claim follows once we find $\theta_{N,r} \in (0,\pi)$ such that for any $\theta$ satisfies $|\theta - \theta_{N,r}| \leq 2^{-5} N^{-\frac12} \varepsilon^{\frac14}$, it holds that
\[
1 - \varepsilon^{\frac12} \leq F_r(\theta) \leq 1 -\varepsilon^{\frac12}+\varepsilon.
\]
$F_{r}$ is a smooth and monotonically increasing function and it holds that
\begin{align*}
& F_{r}(0) = r - \varepsilon^{\frac12} - N \varepsilon \leq 1 - \varepsilon^{\frac12},\\
& F_r\bigl( \frac{\pi}{4}\bigr) = \Bigl| \Bigl( r \cos \frac{\pi}{4} - \varepsilon^{\frac12} - N \varepsilon, r \sin \frac{\pi}{4} \Bigr) \Bigr| \geq 1 - \varepsilon^{\frac12} + \varepsilon.
\end{align*}
Hence, we may find $\theta_{N,r} \in(0,\pi/4)$ such that
\[
F_r(\theta_{N,r}) = 1- \varepsilon^{\frac12} + 2^{-1}\varepsilon.
\]

Next, let us observe $\theta_{N,r} \leq 4 N^{\frac12} \varepsilon^{\frac14}$. 
Since
\begin{equation}\label{est:proofofrem3.3-3.5}
(F_r(\theta))^2 = - 2 r (\varepsilon^{\frac12} + N \varepsilon) \cos \theta +r^2 + (\varepsilon^{\frac12} + N \varepsilon)^2,
\end{equation}
if we define $G_r(\theta) = (F_r(\theta))^2$, for $n \in \N$, we have
\[
G_r^{(2n-1)} (0) =0, \qquad  |G_r^{(2n)} (0)| \leq 3  \varepsilon^{\frac12}.
\]
Therefore, we get
\[
G_r(\theta_{N,r}) - G_r(0) = \sum_{n=1}^{\infty}\frac{G_r^{(2 n)}(0)}{(2n)!} \theta_{N,r}^{2n}.
\]
From $ |G_r^{(2n)} (0)| \leq 3  \varepsilon^{\frac12}$, this implies
\begin{equation}\label{est:proofofrem3.3-4}
\frac{\varepsilon^{\frac12}}{2} \theta_{N,r}^2 - \sum_{n=2}^{\infty}\frac{3 \varepsilon^{\frac12}}{(2n)!} \theta_{N,r}^{2n} 
\leq G_r(\theta_{N,r}) - G_r(0) \leq 2  (2^{-1}+N) \varepsilon.
\end{equation}
Here we used $F_r(\theta) \leq 1$ for any $\theta \in [0,\pi/4]$. 
Because $0 < \theta_{N,r} < \pi/4$, it follows that
\[
1- \sum_{n=2}^{\infty} \frac{3}{(2n)!} \theta_{N,r}^{2(n-1)} \geq \frac12.
\]
Therefore, it follows from \eqref{est:proofofrem3.3-4} that
\[
\frac{\varepsilon^{\frac12}}{4} \theta_{N,r}^2 \leq 2 (2^{-1}+N) \varepsilon,
\]
which implies $\theta_{N,r} \leq 4 N^{\frac12} \varepsilon^{\frac14}$.

The equality \eqref{est:proofofrem3.3-3.5} gives
\[
|G_{r}'(\theta_{N,r})| \leq 2^4 N^{\frac12} \varepsilon^{\frac34}, \qquad |G_r^{(n)}(\theta_{N,r})| \leq 3 \varepsilon^{\frac12} \quad \mathrm{for} \ n \in \N.
\]
As a result, if $|\theta -\theta_{N,r}| \leq 2^{-5} N^{-\frac12} \varepsilon^{\frac14}$, the following Taylor series expansion of $G_r$ at $\theta_{N,r}$
\[
G_r(\theta) = G_r(\theta_{N,r}) + \sum_{n=1}^{\infty}\frac{G_r^{(n)}(\theta_{N,r})}{n!} (\theta - \theta_{N,r})^{n}.
\]
yields
\[
1 - \varepsilon^{\frac12} \leq F_r(\theta) \leq 1- \varepsilon^{\frac12} + \varepsilon,
\]
as desired.
\end{proof}

\section*{acknowledgements}
The authors would like to thank Takamori Kato for helpful suggestions related to Theorem~\ref{thm2} and Nobu Kishimoto for letting us know about his master's thesis \cite{Kishi08}. 
This work was supported by 
JSPS KAKENHI Grant Numbers
JP17K14220,
JP20K14342,
and JP21J00514.

\end{document}